\title{Indestructibility of compact spaces}
\author[R. R. Dias]{Rodrigo R. Dias$^1$}
\thanks{$^1$ Supported by Capes (BEX-1088-11-4)
 and CNPq (143407/2010-4)}
\address{Instituto de Matem\'atica e Estat\'istica,
Universidade de S\~ao Paulo, Caixa Postal 66281,
S\~ao Paulo, SP, 05314-970, Brazil}
\address{Capes Foundation, Ministry of Education of Brazil,
Caixa Postal 250, Bras\'ilia, DF, 70040-020, Brazil}
\email{roque@ime.usp.br}
\author[F. D. Tall]{Franklin D. Tall$^2$}
\thanks{$^2$ Supported by NSERC Grant A-7354 and FAPESP
  (2011/18046-1)}
\address{Department of Mathematics,
University of Toronto,
Toronto, ON, M5S 2E4, Canada}
\email{f.tall@utoronto.ca}
\keywords{compact, indestructible, selection principles, topological
  games, Lindel\"of, inaccessible cardinal, weakly compact cardinal,
  dyadic space, Borel's Conjecture, trees, Kurepa tree, Aronszajn
  tree, lexicographical ordering}
\subjclass[2010]{Primary 54D30; Secondary 03E55, 54D20, 54F05, 54G20,
  91A44}
\begin{document}

\newtheorem{defin}{Definition}[section]

\newtheorem{prop}[defin]{Proposition}

\newtheorem{prob}[defin]{Problem}

\newtheorem{lemma}[defin]{Lemma}

\newtheorem{corol}[defin]{Corollary}

\newtheorem{example}[defin]{Example}

\newtheorem{thm}[defin]{Theorem}

\newtheorem{rem}[defin]{Remark}

\begin{abstract}

In this article we investigate which compact spaces remain compact
under countably closed forcing.
We prove that, assuming the Continuum Hypothesis, the natural
generalizations to $\omega_1$-sequences of the selection principle and
topological game versions of the Rothberger property are not
equi\-va\-lent, even for compact spaces. We also show that Tall and
Usuba's ``$\aleph_1$-Borel Conjecture'' is equiconsistent with the
existence of an inaccessible cardinal.

\end{abstract}

\maketitle

\section{Introduction}

The question of whether the Lindel\"of property of topological spaces
is preserved by countably closed forcing is of interest in conjunction
with A. V. Arhangel'ski\u\i's classic problem of whether Lindel\"of
$T_2$ spaces with points $G_\delta$ have cardinality not exceeding the
continuum \cite{arh}. For a survey of this problem, see \cite{tall95}.

\begin{defin}[Tall \cite{tall95}]
\label{indestructible}

A Lindel\"of space is \emph{indestructible}
(or \emph{indestructibly Lindel\"of})
if the topology it generates in any countably closed forcing extension
is Lindel\"of.

\end{defin}

\begin{thm}[Tall \cite{tall95}]
\label{supercompact}

L\'evy-collapse a supercompact cardinal to $\omega_2$ with coun\-table
conditions. Then every indestructible Lindel\"of space with
points $G_\delta$ has cardinality $\le\aleph_1$.

\end{thm}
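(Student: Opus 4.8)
The plan is to argue by contradiction using a lifted supercompactness embedding, with indestructibility supplying the one covering fact that does not come for free. Write $\kappa$ for the supercompact, $P=\mathrm{Coll}(\omega_1,<\kappa)$, and let $G$ be $P$-generic, so that $\kappa=\omega_2^{V[G]}$. Suppose, in $V[G]$, that $X$ is an indestructibly Lindel\"of space with points $G_\delta$, that $\tau_X$ is its topology, and, toward a contradiction, that $|X|\ge\kappa$; since $\kappa=\aleph_2$ it suffices to rule this out. Back in $V$, fix an elementary embedding $j:V\to M$ with critical point $\kappa$ and $j(\kappa)>|X|$ (taking $M$ as closed as the supercompactness of $\kappa$ permits).

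First I would lift $j$ through the collapse. Since conditions in $P$ are countable, $j\upharpoonright P$ is the identity and $j(P)=\mathrm{Coll}(\omega_1,<j(\kappa))$ factors as $P*\dot P_{\mathrm{tail}}$, where $P_{\mathrm{tail}}=\mathrm{Coll}(\omega_1,[\kappa,j(\kappa)))$ is \emph{countably closed}. Forcing with $P_{\mathrm{tail}}$ over $V[G]$ to obtain $H$ (which is automatically $M[G]$-generic, as $M[G]\subseteq V[G]$), I extend $j$ to an elementary map $\hat j:V[G]\to N$, where $N=M[G][H]$. The two structural facts I need are: (i) because $P_{\mathrm{tail}}$ is countably closed, $V[G][H]$ is a countably closed forcing extension of $V[G]$, so \emph{indestructibility of $X$ yields that $X$ is Lindel\"of in $V[G][H]$}; and (ii) by elementarity $\hat j(X)$ is a space with points $G_\delta$ in $N$.

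The heart of the argument is a single ``new'' point together with a cover. Enumerating $X=\{x_\xi:\xi<\lambda\}$ in $V[G]$ with $\lambda=|X|\ge\kappa$, the point $p$ occupying position $\kappa$ in the $N$-enumeration $\hat j(\langle x_\xi:\xi<\lambda\rangle)$ of $\hat j(X)$ lies in $\hat j(X)$, yet, since $\hat j[\lambda]$ omits $\kappa$ (as $\hat j(\xi)=\xi<\kappa$ for $\xi<\kappa$ and $\hat j(\xi)>\kappa$ otherwise), we have $p\notin\hat j[X]=\{\hat j(x):x\in X\}$; in particular $p\neq\hat j(x)$ for every $x\in X$. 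Fix in $V[G]$ a function assigning to each $x$ a sequence $\langle U_n^x:n<\omega\rangle$ of open sets with $\bigcap_n U_n^x=\{x\}$; applying $\hat j$ gives $\bigcap_n\hat j(U_n^x)=\{\hat j(x)\}$ in $N$, so for each $x$ there is $n$ with $p\notin\hat j(U_n^x)$. Hence $\mathcal U=\{\,U\in\tau_X: p\notin\hat j(U)\,\}$ is an open cover of $X$. Now indestructibility pays off: $X$ is Lindel\"of in $V[G][H]$, so some countable $\{U_k:k<\omega\}\subseteq\mathcal U$ already covers $X$; and because $P_{\mathrm{tail}}$ adds no new $\omega$-sequences of sets from $V[G]$, we get $\langle U_k:k<\omega\rangle\in V[G]$. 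Thus ``$X=\bigcup_k U_k$'' is a statement of $V[G]$ to which $\hat j$ applies, giving $\hat j(X)=\bigcup_k\hat j(U_k)$ in $N$; but $p\notin\hat j(U_k)$ for every $k$, whence $p\notin\hat j(X)$, contradicting $p\in\hat j(X)$. This contradiction forces $|X|<\kappa=\aleph_2$, i.e. $|X|\le\aleph_1$.

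I expect the main obstacle to be the lifting step together with the subtler point of arranging that the countable subcover descends to $V[G]$: the argument hinges on the interplay whereby indestructibility produces the countable subcover in the \emph{outer} model $V[G][H]$, while countable closure of the tail forcing returns the relevant $\omega$-sequence to $V[G]$, where the embedding can be applied to it. Choosing $p$ as the ``$\kappa$-th point'' of $\hat j(X)$ is what converts the hypothesis $|X|\ge\kappa$ into a genuinely new point, and the points-$G_\delta$ assumption is used precisely to separate $p$ from all of $\hat j[X]$ by open sets of the special form $\hat j(U)$.
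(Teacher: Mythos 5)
The paper itself states this theorem without proof, quoting it from \cite{tall95}, and your argument is correct and is essentially Tall's original one: lift $j$ through the collapse via the countably closed tail $\mathrm{Coll}(\omega_1,[\kappa,j(\kappa)))^M$, use indestructibility to keep $X$ Lindel\"of in $V[G][H]$, extract a countable subcover of $\{U\in\tau_X:p\notin\hat j(U)\}$ (which covers $X$ by the points-$G_\delta$ hypothesis and $p\notin\hat j[X]$), and use countable closure to pull that $\omega$-sequence back into $V[G]$ so $\hat j$ applies, contradicting $p\in\hat j(X)$. All the technical points you gesture at do go through as you say --- in particular $j\upharpoonright P=\mathrm{id}$ since conditions are countable and $\mathrm{crit}(j)=\kappa$, the tail remains countably closed over $V[G]$ because $P$ is itself countably closed and $M$ is closed under $\omega$-sequences, and $\kappa\notin\hat j[\lambda]$ needs only $\hat j(\xi)\ge j(\kappa)>\kappa$ for $\xi\ge\kappa$ (so the hypothesis $j(\kappa)>|X|$ is not even needed there).
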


This was later improved by M. Scheepers \cite{sch10}, who replaced
``supercompact'' by ``measurable''.
``Points $G_\delta$'' was then improved to ``pseudocharacter
$\le\aleph_1$'' by Tall and T. Usuba in \cite{usuba}.

Scheepers and Tall \cite{schtall}
noticed that indestructibility of a Lindel\"of space is equi\-va\-lent
to player One not having a winning strategy in an $\omega_1$-length
generalization of the \emph{Rothberger game} introduced by F. Galvin
in \cite{galvin}.

\begin{defin}
\label{game}

Let $X$ be a topological space and $\alpha$ be an ordinal. We denote by
$\mathsf{G}_1^\alpha(\mathcal{O}_X,\mathcal{O}_X)$ the game defined as
follows. In each inning $\xi\in\alpha$, player One
chooses an open cover $\mathcal{U}_\xi$ of $X$, and then player Two
picks $V_\xi\in\mathcal{U}_\xi$. Two wins the play if
$X=\bigcup\{V_\xi:\xi\in\alpha\}$; otherwise, One
is the winner.

\end{defin}

\begin{thm}[Scheepers-Tall \cite{schtall}]
\label{st1}

A Lindel\"of space $X$ is indestructible if and only if One does not
have a winning strategy in the game $\mathsf{G}_1^{\omega_1}(\mathcal
O_X,\mathcal O_X)$.

\end{thm}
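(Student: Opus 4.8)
The plan is to prove the equivalence by connecting countably closed forcing extensions with the $\omega_1$-length Rothberger game. I would set up a correspondence between a purported winning strategy $\sigma$ for player One and a name for a bad open cover in some countably closed forcing. The natural forcing to consider is the tree of partial plays of the game consistent with $\sigma$, ordered by reverse extension: a condition is a finite (or countable, as appropriate) sequence recording Two's choices $V_\xi$ so far, with One's covers $\mathcal{U}_\xi = \sigma(\langle V_\eta : \eta < \xi\rangle)$ determined by the strategy. Since a play lasts $\omega_1$ innings and we only ever look at proper initial segments of length $< \omega_1$, this tree is countably closed.

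\medskip

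For the forward direction, I would assume $X$ is indestructible and argue contrapositively: suppose One has a winning strategy $\sigma$ in $\mathsf{G}_1^{\omega_1}(\mathcal{O}_X, \mathcal{O}_X)$. Forcing with the tree of $\sigma$-plays generically produces a descending chain of conditions, hence an $\omega_1$-length play $\langle V_\xi : \xi < \omega_1\rangle$ against $\sigma$; because $\sigma$ is winning for One, the family $\{V_\xi : \xi < \omega_1\}$ fails to cover $X$ in the extension. The key point is that each $V_\xi$ is an open set \emph{coded in the ground model}, so in the extension these $V_\xi$ generate basic open sets of the topology that $X$ carries in $V[G]$; their union omitting a point witnesses that $X$ is not Lindel\"of in $V[G]$, contradicting indestructibility. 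Here one must be careful that a ground-model point left uncovered stays uncovered, which follows because the $V_\xi$ are ground-model open sets and countably closed forcing adds no new $\omega$-sequences of ordinals, so it adds no new points to a first-countable-style analysis of the cover.

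\medskip

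For the reverse direction, assume One has no winning strategy; I want to show $X$ stays Lindel\"of after any countably closed forcing $\mathbb{P}$. Given a $\mathbb{P}$-name $\dot{\mathcal{W}}$ for an open cover of $X$ in $V[G]$, the strategy is to have player One, in the ground model, play covers that ``reflect'' the possible generic decisions: at inning $\xi$, One plays an open cover refining the ground-model open sets that $\mathbb{P}$ forces to appear in $\dot{\mathcal{W}}$, using countable closure to thread together a descending sequence of conditions $p_\xi$ deciding more and more of the cover. Since One has no winning strategy, in particular the specific strategy just described is not winning, so there is a play in which Two's picks $\{V_\xi : \xi < \omega_1\}$ do cover $X$; translating back through the conditions $p_\xi$ yields a countable subfamily of $\dot{\mathcal{W}}$ (names decided along a single cofinal branch) that is forced to cover $X$, giving Lindel\"ofness in $V[G]$.

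\medskip

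The hard part will be the reverse direction, specifically the bookkeeping that turns ``One has no winning strategy'' into an actual covering subfamily of the name $\dot{\mathcal{W}}$. The subtlety is that failure of a winning strategy for One only guarantees, for \emph{each} strategy, the existence of a defeating play; one must design One's strategy cleverly enough that \emph{every} defeat encodes genuine covering information about $\dot{\mathcal{W}}$, and then use countable closure of $\mathbb{P}$ to assemble the $\omega_1$ innings into a single descending chain of conditions whose limit forces the chosen countable subfamily to cover. Managing the interaction between the $\omega_1$-length of the game and the requirement that the relevant names be decided along a branch—without collapsing cardinals or adding new covers—is where the real work lies; this is presumably where the countable closure hypothesis is used essentially and where the result from \cite{schtall} draws its force.
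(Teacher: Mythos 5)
Both directions of your sketch contain genuine gaps; the poset you chose for the forward direction is the right one, but what you do with it is backwards. The assertion ``because $\sigma$ is winning for One, the family $\{V_\xi:\xi<\omega_1\}$ fails to cover $X$ in the extension'' is false: ``$\sigma$ is winning'' only quantifies over plays in the ground model $\mathbf M$, while the generic play is a \emph{new} play, and in fact it \emph{does} cover $X$ in $\mathbf M[G]$ --- for each $x\in X$, the set of partial plays in which some $V_\xi$ already contains $x$ is dense (One's moves are covers, so Two can always pick a member containing $x$), and $X$ acquires no new points. Worse, even if the union did omit a point, that would not contradict Lindel\"ofness: to destroy Lindel\"ofness you must exhibit an open cover with \emph{no countable subcover}, not a family that fails to cover. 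The correct argument (as in Scheepers--Tall) inverts your picture: the generic play $\{V_\xi:\xi<\omega_1\}$ is the destroying cover, and the winning-ness of $\sigma$ in $\mathbf M$ is used to kill countable subcovers --- if some $r\Vdash\text{``}\{V_\xi:\xi\in C\}$ covers $X$'' with $C$ a countable set decided in $\mathbf M$ (using countable closure), extend the partial play $r$ \emph{in the ground model} to a full $\omega_1$-play according to $\sigma$; since One wins it, $\bigcup_{\xi\in C}V_\xi\subseteq\bigcup_{\xi<\omega_1}V_\xi\neq X$, contradicting $r$ by absoluteness of covering for ground-model sets and points.

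In the reverse direction your bookkeeping plan is essentially right (at inning $\xi$, One plays $\{U\in\mathcal T:\exists q\le p_\xi\;(q\Vdash U\in\dot{\mathcal W})\}$, where $\dot{\mathcal W}$ may be taken to consist of ground-model open sets since $\mathcal T$ is a base in $\mathbf M[G]$; after Two picks $V_\xi$, choose $p_{\xi+1}\le p_\xi$ forcing $V_\xi\in\dot{\mathcal W}$, with lower bounds at countable limits), but your closing step --- ``use countable closure of $\mathbb P$ to assemble the $\omega_1$ innings into a single descending chain of conditions whose limit forces the chosen countable subfamily to cover'' --- cannot work: a countably closed poset need not have lower bounds for $\omega_1$-chains, and the defeat of One's strategy only tells you $\bigcup_{\xi<\omega_1}V_\xi=X$, a fact spread over all $\omega_1$ innings. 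The missing idea is the one place the hypothesis ``$X$ is Lindel\"of'' enters: in the ground model, extract from $\{V_\xi:\xi<\omega_1\}$ a \emph{countable} subcover $\{V_\xi:\xi\in C\}$; then $\alpha=\sup C+1<\omega_1$, and the single condition $p_\alpha$, which already forces each $V_\xi$ with $\xi\in C$ into $\dot{\mathcal W}$, forces that $\dot{\mathcal W}$ has a countable subcover. Running the construction below an arbitrary condition purporting to force ``$\dot{\mathcal W}$ has no countable subcover'' yields the contradiction. Without this reduction from length $\omega_1$ to a countable stage the argument stalls --- and noticing it also explains why Lindel\"ofness of $X$ is a hypothesis of the theorem rather than decoration.
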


\begin{defin}
\label{S1}

Let $X$ be a topological space and $\alpha$ be an
ordinal. $\mathsf{S}_1^\alpha(\mathcal{O}_X,\mathcal{O}_X)$ denotes
the following statement:
``For every sequence $(\mathcal U_\xi)_{\xi\in\alpha}$ of open covers of
$X$, there is a sequence $(U_\xi)_{\xi\in\alpha}$ such that
$U_\xi\in\mathcal U_\xi$ for each $\xi\in\alpha$ and
$X=\bigcup\{U_\xi:\xi\in\alpha\}$''.
\end{defin}

Note that $\mathsf S_1^\omega(\mathcal O_X,\mathcal O_X)$ means that
$X$ is a Rothberger space \cite{roth}.

It is easily seen that the non-existence of a winning strategy for
player One in $\mathsf G_1^\alpha(\mathcal O_X,\mathcal O_X)$ implies
$\mathsf S_1^\alpha(\mathcal O_X,\mathcal O_X)$.
In \cite{pawl}, J. Pawlikowski proved the converse of this
implication for the case $\alpha=\omega$:

\begin{thm}[Pawlikowski \cite{pawl}]
\label{pawl}

A topological space $X$ is Rothberger
if and only if One has
no winning strategy in
$\mathsf{G}_1^\omega(\mathcal{O}_X,\mathcal{O}_X)$.

\end{thm}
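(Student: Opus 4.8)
As noted above, if One has no winning strategy in $\mathsf{G}_1^\omega(\mathcal O_X,\mathcal O_X)$ then $\mathsf{S}_1^\omega(\mathcal O_X,\mathcal O_X)$ holds, so $X$ is Rothberger; the content of the theorem is the converse. The plan is to prove it in contrapositive form: starting from a winning strategy $\sigma$ for One, construct a countable sequence of open covers admitting no covering selection, so that $X$ fails $\mathsf{S}_1^\omega(\mathcal O_X,\mathcal O_X)$. First a harmless reduction: if $X$ is not Lindel\"of it is certainly not Rothberger (apply $\mathsf{S}_1^\omega$ to a constant sequence), so I may assume $X$ is Lindel\"of. Then, replacing each cover that $\sigma$ prescribes by a countable subcover yields a strategy whose moves are all countable covers and which is still winning — shrinking One's covers only restricts Two's options, so every play consistent with the new strategy was already consistent with $\sigma$. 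Hence I may assume from the outset that $\sigma$ always returns a countable open cover.

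Next I would encode $\sigma$ as a countably branching labelled tree. By recursion on $\eta\in\omega^{<\omega}$ define a legal partial play $p_\eta$ for Two together with One's reply $\mathcal{U}_\eta=\sigma(p_\eta)$, enumerated as $\mathcal{U}_\eta=\{U^\eta_m:m\in\omega\}$; set $p_{\langle\rangle}=\langle\rangle$ and let $p_{\eta^\frown\langle m\rangle}$ be $p_\eta$ followed by Two's move $U^\eta_m$. Each branch $f\in\omega^\omega$ then codes a full play whose outcome is $\bigcup_{k\in\omega}U^{f\restriction k}_{f(k)}$, and because $\sigma$ is winning this union is never all of $X$: along every branch some point of $X$ is missed.

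The naive attempt is to linearize: fix a bijection $b\colon\omega\to\omega^{<\omega}$ and take the witnessing sequence to be $(\mathcal{U}_{b(n)})_{n\in\omega}$. A Rothberger selection from it amounts to a choice $W_\eta\in\mathcal{U}_\eta$ of one outgoing edge at \emph{every} node $\eta$, with $\bigcup_{\eta\in\omega^{<\omega}}W_\eta=X$. Following these choices from the root does single out one branch $g$, along which coverage must fail; but the hypothesis $\bigcup_\eta W_\eta=X$ ranges over \emph{all} nodes, and a point missed along $g$ may well be covered by some off-branch $W_\eta$. Thus a single application of $\mathsf{S}_1^\omega$ to the whole tree does not contradict the winning of $\sigma$. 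This non-adaptivity — a selection fixes one child per node independently, whereas the winning condition only constrains choices made coherently along one branch — is exactly the main obstacle, and is precisely the phenomenon that, for $\alpha=\omega_1$, will allow the two notions to come apart.

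Overcoming this is the substance of Pawlikowski's argument. Rather than exposing the entire tree simultaneously, I would build the witnessing sequence of covers by a recursion over $\omega$ that forces any selection to be read off along a \emph{single} root-to-branch path: at each stage only finitely much of the tree is activated, and the covers are arranged, via a book-keeping/fusion scheme controlled by a rank measuring how the failure of coverage is distributed through the tree, so that the independent choices a selection makes can be fused into one coherent branch whose covered part contains $\bigcup_n W_n$. Granting such an extraction, the point that $\sigma$ guarantees is missed along that branch is missed by the selection, so no selection covers $X$ and $X$ is not Rothberger. I expect this branch-extraction step — reconciling the freedom of a non-adaptive selection with the branch-wise winning condition — to be the crux; everything preceding it (the Lindel\"of reduction and the tree coding) is routine.
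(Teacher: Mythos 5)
You have not given a proof: the theorem's entire content is the step you explicitly defer. What you actually carry out is correct but, as you concede, routine --- the easy direction (no winning strategy for One implies $\mathsf{S}_1^\omega(\mathcal O_X,\mathcal O_X)$), the reduction to Lindel\"of $X$, the Galvin-style replacement of One's moves by countable subcovers (your verification that shrinking One's covers preserves ``winning'' is fine), the coding of a strategy as a labelled tree on $\omega^{<\omega}$, and the diagnosis of why naively linearizing the tree and applying $\mathsf{S}_1^\omega$ once fails. At the crux you write only that the covers are ``arranged, via a book-keeping/fusion scheme controlled by a rank measuring how the failure of coverage is distributed through the tree,'' and then proceed with ``Granting such an extraction.'' You never define that rank, never construct the witnessing sequence of covers, and never prove that the non-adaptive choices of a selection can be fused into a single coherent branch; that fusion \emph{is} Pawlikowski's theorem, and naming the obstacle is not the same as overcoming it. The difficulty is genuine and cannot be absorbed into routine book-keeping: the analogous extraction provably fails at length $\omega_1$ (that is exactly what Theorem \ref{lex} of this paper shows, under $\mathrm{CH}$, even for a compact space), so any correct argument must use countability in some essential, non-obvious way, and nothing in your sketch does so beyond the tree being countably branching.

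For the comparison you were asked to make: the paper contains no proof of Theorem \ref{pawl} at all --- it is quoted from Pawlikowski \cite{pawl} and used as a black box (e.g.\ in deriving Corollary \ref{pawl-alpha}) --- so there is no in-paper argument that could fill your gap, and your attempt must be judged against Pawlikowski's original article. His proof does begin with the same standard reductions you give, but the heart of it is a concrete combinatorial construction showing that the Rothberger property is strong enough to defeat a strategy when the applications of $\mathsf{S}_1^\omega$ are scheduled against the finite levels of the strategy tree (for instance, one genuinely usable strengthening, absent from your text, is that $\mathsf{S}_1^\omega$ applied to infinitely many disjoint blocks of indices yields a single selection that covers $X$ once per block). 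Until you supply such a construction and verify it against the winning condition along the extracted branch, what you have is a correct statement of the problem together with its easy half, not a proof.
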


We thank Boaz Tsaban for the following observation.
Pawlikowski's result yields the analogous equivalence
for $\omega\le\alpha<\omega_1$:

\begin{corol}
\label{pawl-alpha}

Let $\alpha$ be an infinite countable ordinal. The following are
equivalent for a topological space $X$:

\begin{itemize}

\item[$(a)$]
One does not have a winning strategy in $\mathsf G_1^\alpha(\mathcal
O_X,\mathcal O_X)$;

\item[$(b)$]
$\mathsf S_1^\alpha(\mathcal O_X,\mathcal O_X)$;

\item[$(c)$]
$X$ is Rothberger.

\end{itemize}

\end{corol}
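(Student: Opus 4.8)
The plan is to establish the cycle of implications $(a)\Rightarrow(b)\Rightarrow(c)\Rightarrow(a)$. The implication $(a)\Rightarrow(b)$ is already recorded in the text before the statement, for every ordinal, so it may be invoked directly; the real work lies in $(b)\Rightarrow(c)$ and $(c)\Rightarrow(a)$, the latter being where Pawlikowski's Theorem~\ref{pawl} enters.

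For $(b)\Rightarrow(c)$ I would exploit the fact that the selection principle $\mathsf S_1^\alpha(\mathcal O_X,\mathcal O_X)$ is insensitive to the order in which the covers are listed: it depends only on the cardinality of the index set. Since $\omega\le\alpha<\omega_1$ we have $|\alpha|=\aleph_0$, so I fix a bijection $g\colon\alpha\to\omega$. Given a sequence $(\mathcal U_n)_{n<\omega}$ of open covers for which a covering selection must be produced, I set $\mathcal V_\xi=\mathcal U_{g(\xi)}$ for $\xi\in\alpha$ and apply $\mathsf S_1^\alpha$ to $(\mathcal V_\xi)_{\xi\in\alpha}$, obtaining $W_\xi\in\mathcal V_\xi$ with $\bigcup_{\xi\in\alpha}W_\xi=X$. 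Because $g$ is a bijection, putting $U_n=W_{g^{-1}(n)}$ selects exactly one set $U_n\in\mathcal U_n$ from each cover, and $\bigcup_{n<\omega}U_n=\bigcup_{\xi\in\alpha}W_\xi=X$; hence $X$ is Rothberger. (Running the same bijection backwards yields $(c)\Rightarrow(b)$ as well, though the cycle does not require it.)

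For $(c)\Rightarrow(a)$ I would argue contrapositively through the $\omega$-game. Suppose One had a winning strategy $\sigma$ in $\mathsf G_1^\alpha(\mathcal O_X,\mathcal O_X)$. The first $\omega$ innings of this game are played exactly as in $\mathsf G_1^\omega(\mathcal O_X,\mathcal O_X)$, and One's moves there depend only on the length-${<}\omega$ history, so the restriction $\sigma'$ of $\sigma$ to those innings is a legitimate strategy in $\mathsf G_1^\omega$. Given any play of Two against $\sigma'$ in the $\omega$-game, I extend it to a play against $\sigma$ in the $\alpha$-game by letting Two move arbitrarily in the innings $\omega\le\xi<\alpha$; since $\sigma$ wins, $\bigcup_{\xi<\alpha}V_\xi\ne X$, and therefore the sub-union $\bigcup_{n<\omega}V_n$ misses that same point. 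Thus $\sigma'$ is winning, One has a winning strategy in $\mathsf G_1^\omega$, and by Theorem~\ref{pawl} $X$ is not Rothberger. Contraposing gives $(c)\Rightarrow(a)$.

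I expect the main subtlety to be keeping straight which manipulations are legitimate for the selection principle as opposed to the game. The selection principle may be transported across an arbitrary bijection of index sets, and this is precisely what collapses all countable lengths to $\omega$; the game, by contrast, is order-sensitive, so one cannot reindex it freely, and the correct device is instead to restrict a long-game strategy to its initial $\omega$ segment, using the elementary but essential observation that a subfamily of a non-covering selection is still non-covering. Countability of $\alpha$ is used throughout---it is what makes $|\alpha|=\aleph_0$---and Pawlikowski's Theorem~\ref{pawl} is the sole nontrivial external input, supplying the bridge between the game and the selection principle at length $\omega$.
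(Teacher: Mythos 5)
Your proposal is correct and takes essentially the same route as the paper: $(a)\rightarrow(b)$ is quoted from the preceding discussion, $(b)\leftrightarrow(c)$ is obtained by transporting the covers along a bijection between $\alpha$ and $\omega$ (what the paper compresses into ``immediate since $|\alpha|=|\omega|$''), and $(c)\rightarrow(a)$ restricts a One-strategy in $\mathsf G_1^\alpha(\mathcal O_X,\mathcal O_X)$ to its first $\omega$ innings and applies Theorem~\ref{pawl}. You simply spell out, correctly, the details the paper leaves as ``clearly'' --- your contrapositive formulation of the last step (a winning $\alpha$-strategy restricts to a winning $\omega$-strategy) is the same argument as the paper's (every defeatable-$\omega$-strategy statement lifts to the longer game).
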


\begin{proof}
We have already pointed out that $(a)\rightarrow(b)$. The equivalence
between $(b)$ and $(c)$ is immediate since
$|\alpha|=|\omega|$. Finally, if every strategy for One
in the game $\mathsf G_1^\omega(\mathcal O_X,\mathcal
O_X)$ can be defeated, then clearly the same holds in the longer
game $\mathsf G_1^\alpha(\mathcal O_X,\mathcal O_X)$; therefore, $(c)$
implies $(a)$ in view of Theorem \ref{pawl}.
\end{proof}

In light of Theorems \ref{st1} and \ref{pawl}, Scheepers and Tall
asked whether the cor\-res\-ponding equivalence would also hold for
the case $\alpha=\omega_1$ \cite{schtall,tallqa}:

\begin{prob}[Scheepers-Tall \cite{schtall}, Tall \cite{tallqa}]
\label{st?}

Is indestructibility of a Lindel\"of space $X$ equivalent to $\mathsf
S_1^{\omega_1}(\mathcal O_X,\mathcal O_X)$?

\end{prob}

We partially answer their question by showing (in Section
\ref{sectionlex}):

\begin{thm}
\label{lex}

There is a compact $T_2$ destructible space that, assuming the
Continuum Hypothesis, satisfies $\mathsf{S}_1^{\omega_1}(\mathcal
O,\mathcal O)$.

\end{thm}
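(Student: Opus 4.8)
The plan is to exhibit a concrete compact Hausdorff space whose destructibility is easy to verify, and then to show that under CH the sequential selection principle $\mathsf S_1^{\omega_1}(\mathcal O,\mathcal O)$ nevertheless holds. Given the keywords (trees, Aronszajn tree, lexicographical ordering), I would take $X$ to be a compact linearly ordered space built from an Aronszajn tree $T$ equipped with the lexicographical order, realized as a subspace (or its Dedekind completion) of the ordered topological space associated to the branches and nodes of $T$. The natural candidate is the \emph{lexicographically ordered tree} whose points are the nodes of $T$ and whose cofinal branches supply the limit points needed for compactness; one adds endpoints and fills Dedekind gaps so that the resulting linearly ordered space is compact and Hausdorff. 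The Aronszajn property (no uncountable chains, no uncountable antichains, but levels of height $\omega_1$) is what will simultaneously furnish destructibility in one direction and the covering property in the other.

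For destructibility, I would argue that collapsing $\omega_1$ (or adding a cofinal $\omega$-sequence through some level) by a countably closed forcing produces, in the extension, an uncountable discrete-like family or a new open cover with no countable subcover, so that $X$ ceases to be Lindel\"of. Concretely, an Aronszajn tree may acquire a cofinal branch (or its branches may be rearranged) after countably closed forcing in a way that exposes $\aleph_1$-many isolated-type points or creates an open cover witnessing non-Lindel\"ofness; the linear-order topology makes such covers explicit via intervals determined by nodes. First I would pin down exactly which countably closed forcing destroys Lindel\"ofness and verify, by absoluteness of the tree relation and the ordering, that the relevant cover is genuinely open and genuinely without a countable subcover in the extension.

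For the positive direction I would use CH to run a transfinite bookkeeping argument of length $\omega_1$. Given a sequence $(\mathcal U_\xi)_{\xi<\omega_1}$ of open covers, I want to choose $U_\xi\in\mathcal U_\xi$ covering $X$. The key is that CH lets me enumerate a sufficiently rich base (or the $\aleph_1$-many relevant points/branches) in type $\omega_1$ and, at stage $\xi$, pick $U_\xi$ so as to cover the $\xi$-th point while maintaining that the partial selection is ``on track'' to cover everything. Because the tree has height $\omega_1$ and all levels are countable, at each countable stage only countably much has been committed, and CH guarantees the base has size $\aleph_1$, so a closing-off/reflection argument at club-many stages ensures every point is eventually caught. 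I would phrase this as: build an increasing continuous chain of countable elementary submodels $(M_\eta)_{\eta<\omega_1}$, and at each successor/limit stage use the covers indexed in $M_\eta$ to cover the points coded in $M_\eta$, leveraging that each point of $X$ lies in $M_\eta$ for club-many $\eta$.

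The hard part will be the positive direction, specifically reconciling the \emph{fixed} length $\omega_1$ of the selection with the need to cover $\aleph_1$-many points without the luxury of a winning strategy for Two. I expect the delicate point to be ensuring that the bookkeeping genuinely covers \emph{all} of $X$, including the branch/gap points, rather than merely the nodes: one must verify that the countably-many commitments made below each stage, together with the CH-enumeration, leave no point uncovered, which hinges on the precise interaction between the order topology and the tree structure. Establishing that $\mathsf S_1^{\omega_1}$ holds while destructibility fails is exactly the gap between the statement $\mathsf S_1^{\omega_1}(\mathcal O,\mathcal O)$ and the non-existence of a winning strategy for One, so the whole construction must be tuned so that no single strategy for One can be anticipated by the sequential selection — this is where the Aronszajn tree's lack of uncountable chains is essential.
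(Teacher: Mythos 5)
There is a genuine gap, and it lies in your choice of space: the lexicographic line of an Aronszajn tree is \emph{indestructible}, which is the opposite of what the theorem requires. An Aronszajn tree is an $\omega_1$-tree, and by Silver's Lemma (Lemma \ref{kurepano2}) no $\omega_1$-tree has a subtree isomorphic to $\langle\mbox{}^{<\omega_1}2,\subseteq\rangle$; by Lemma \ref{equiv}, this makes the associated linearly ordered compact space $L_T$ indestructible. Your proposed destruction mechanism cannot be repaired: countably closed forcing adds no new $\omega$-sequences of ordinals, so it cannot collapse $\omega_1$, and it cannot add a cofinal branch to an Aronszajn tree either, since any countably closed forcing adding a new branch to $T$ forces $T$ to contain a copy of $\mbox{}^{<\omega_1}2$ (the $(c)\rightarrow(d)$ step of Lemma \ref{equiv}, via Todor\v cevi\'c's argument). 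Worse, under CH your space has at most $\aleph_1$ points (every branch of an Aronszajn tree is countable, so $|L_T|\le\aleph_1^{\aleph_0}=\aleph_1$), whence it is indestructible already by Lemma \ref{lealeph1}, and $\mathsf S_1^{\omega_1}(\mathcal O,\mathcal O)$ holds for it trivially by covering the $\xi$-th point in the $\xi$-th inning --- so your elaborate elementary-submodel bookkeeping is superfluous for your space, while the destructibility half of the theorem is simply false for it. The ``no uncountable chains'' feature of Aronszajn trees, which you flag as essential, is precisely what kills destructibility; the paper uses Aronszajn and Kurepa trees in Section \ref{sectiontrees} for the opposite purpose, namely to manufacture \emph{indestructible} examples.

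The correct example is the branch space of the \emph{full} binary tree: $X=\mbox{}^{\omega_1}2$ with the lexicographical order, a compact linearly ordered space of size $2^{\aleph_1}$. Destructibility comes not from any forcing computation but from a structural fact: no point of $X$ is a $G_\delta$ (Lemma \ref{noGd}), so the \v Cech--Posp\'i\v sil theorem gives an $\omega_1$-\v Cech-Posp\'i\v sil tree in $X$, hence a winning strategy for One in $\mathsf G_1^{\omega_1}(\mathcal O_X,\mathcal O_X)$ (Proposition \ref{cptree}), hence destructibility via Theorem \ref{st1}. For the CH direction, the idea missing from your sketch --- and indispensable, since $|X|=2^{\aleph_1}>\aleph_1$ means no scheme that catches only $\aleph_1$-many designated points one per inning can by itself cover $X$ --- is Lemma \ref{conc}: every infinite closed subset of $X$ contains an eventually constant function. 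Under CH the eventually constant functions number exactly $\aleph_1$; enumerating them as $\{f_\alpha:\alpha\in\omega_1\setminus\omega\}$ and choosing $U_\alpha\in\mathcal U_\alpha$ with $f_\alpha\in U_\alpha$ leaves an uncovered closed set containing no eventually constant point, which is therefore finite and is absorbed by the first $\omega$ innings. In short, what you need is a ``small kernel'' $C\subseteq X$ of size $\aleph_1$ such that every closed set disjoint from $C$ is finite; no club-reflection or closing-off argument substitutes for exhibiting such a kernel.
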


The fact that this example is compact leads us to investigate the
indestructibility of compact spaces. Let us say that a compact space
is \emph{indestructibly compact} if it remains compact
under countably closed forcing.
Notice that:

\begin{lemma}
\label{countcompact}

A compact space is indestructibly Lindel\"of if and only it is
indestructibly compact.

\end{lemma}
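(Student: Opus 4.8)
The plan is to prove the two implications separately; the forward direction is essentially immediate, while the reverse direction carries the real content and relies on the defining property of countably closed forcing.

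First I would dispose of the easy implication. Since compactness implies the Lindel\"of property in any model of set theory, if $X$ remains compact in every countably closed forcing extension, then it certainly remains Lindel\"of in every such extension; thus indestructible compactness implies indestructible Lindel\"ofness. (Here $X$ is assumed compact to begin with, so ``indestructibly compact'' is meaningful.)

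For the converse, suppose $X$ is compact in the ground model $V$ and is indestructibly Lindel\"of. Fix a countably closed poset $\mathbb P$ and a $V$-generic filter $G$; write $\tau$ for the ground-model topology of $X$ and $\tau'$ for the topology it generates in $V[G]$. I must show $X$ is compact in $V[G]$. Let $\mathcal U \in V[G]$ be a $\tau'$-open cover of $X$. Since $\tau$ is a base for $\tau'$, for each $x \in X$ I can pick $W_x \in \tau$ and $U_x \in \mathcal U$ with $x \in W_x \subseteq U_x$; as any finite subcover drawn from $\mathcal W = \{W_x : x \in X\} \subseteq \tau$ yields a finite subcover of $\mathcal U$, I may assume from the outset that $\mathcal U \subseteq \tau$ consists of ground-model open sets.

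Now I would invoke indestructible Lindel\"ofness in $V[G]$: the cover $\mathcal U$ admits a countable subcover, so there is a function $f \colon \omega \to \mathcal U$ in $V[G]$ with $X = \bigcup_{n \in \omega} f(n)$. The crucial step---and the one I expect to be the main (though standard) obstacle---is that $f$ already lies in $V$: because $\mathbb P$ is countably closed it adds no new $\omega$-sequences of ground-model elements, and $f$ is precisely such a sequence, since its values lie in $\tau \in V$. Consequently $\{f(n) : n \in \omega\}$ is a countable subfamily of $\tau$ belonging to $V$ that covers $X$. Applying compactness of $X$ in $V$ produces a finite set $F \subseteq \omega$ with $X = \bigcup_{n \in F} f(n)$; and since being covered by a fixed finite family is absolute, this remains a finite subcover of $\mathcal U$ in $V[G]$. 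Therefore $X$ is compact in $V[G]$, and as $G$ was arbitrary, $X$ is indestructibly compact.
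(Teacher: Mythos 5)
Your proof is correct and is essentially the paper's argument in expanded form: the paper disposes of the lemma with the one-line observation that countable compactness is preserved by countably closed forcing (so that in the extension the space is Lindel\"of plus countably compact, hence compact), and your key step---reducing to a cover by ground-model open sets and pulling the countable subcover back to $V$ because countably closed forcing adds no new $\omega$-sequences of ground-model elements---is precisely the content of that observation. The only cosmetic difference is that you finish by applying full compactness in $V$ rather than countable compactness in $V[G]$, which changes nothing of substance.
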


\begin{proof}

This follows from the observation that countable compactness is
preserved by countably closed forcing.
\end{proof}

Thus we may refer to \emph{compact indestructible spaces} without
specifying the property that is being preserved.

We shall also prove (in Corollaries \ref{dyadicsize} and
\ref{compactT2}, respectively) that:

\begin{thm}
\label{dyadicintro}

Dyadic spaces of cardinality greater than $\mathfrak{c}$ are
destructible.

\end{thm}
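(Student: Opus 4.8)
The plan is to reduce the general dyadic case to the single space $2^{\omega_1}$ and then to destroy the latter by one countably closed forcing. Recall that, by Lemma \ref{countcompact} and Definition \ref{indestructible}, a compact (hence dyadic) space is destructible precisely when some countably closed forcing makes the topology it generates fail to be Lindel\"of. Since $X$ is compact Hausdorff with $|X|>\mathfrak c$, its weight must be uncountable: a compact Hausdorff space of countable weight is metrizable and so has cardinality at most $\mathfrak c$. Thus $w(X)\ge\aleph_1$, and by a classical theorem on dyadic compacta (Gerlits, building on Efimov) a dyadic space of uncountable weight contains a closed subspace homeomorphic to $2^{\omega_1}$. (Alternatively, such an $X$ maps continuously onto $2^{\omega_1}$, which streamlines the transfer step below.) I fix such a closed copy $C\subseteq X$ together with a ground-model homeomorphism $h\colon C\to 2^{\omega_1}$.

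The core of the argument is a forcing lemma for the cube itself. Let $\mathbb P$ be any countably closed notion of forcing adding a new function $g\colon\omega_1\to 2$ (for instance, the forcing adding a subset of $\omega_1$ by countable conditions), and let $G$ be generic. Write $K=(2^{\omega_1})^V$ for the ground-model points, endowed in $V[G]$ with the topology generated by the ground-model subbasis $\{\{f\in K:f(\alpha)=i\}:\alpha<\omega_1,\ i\in 2\}$. In $V[G]$ consider the cover $\mathcal U=\{U_\alpha:\alpha<\omega_1\}$, where $U_\alpha=\{f\in K:f(\alpha)\ne g(\alpha)\}$. Each $U_\alpha$ is subbasic open, and $\mathcal U$ covers $K$, since every $f\in K$ lies in $V$, so $f\ne g$ and hence $f(\alpha)\ne g(\alpha)$ for some $\alpha$. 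I claim $\mathcal U$ has no countable subcover: if $\{U_{\alpha_n}:n<\omega\}$ covered $K$, then, as $\mathbb P$ is countably closed, the sequences $(\alpha_n)_{n<\omega}$ and $(g(\alpha_n))_{n<\omega}$ already belong to $V$, so the ground-model function $f^*$ sending each $\alpha_n$ to $g(\alpha_n)$ and every other coordinate to $0$ lies in $K$ but in no $U_{\alpha_n}$, a contradiction. Hence $K$ is not Lindel\"of in $V[G]$.

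It remains to transfer this to $X$. The underlying point set of $X$ and its defining subbasis are fixed ground-model objects, and membership of a point in a subbasic set is absolute; consequently $h$, being a ground-model homeomorphism, continues to witness a homeomorphism between $C$ (with the subspace topology it inherits from $X$ in $V[G]$) and $K$, so $C$ is not Lindel\"of in $V[G]$. Moreover $C$ remains closed in $V[G]$: in $V$ the set $X\setminus C$ is a union of basic open sets, and this set-theoretic identity between subsets of the fixed set $X$ persists in $V[G]$, where that union is still open. Since a closed subspace of a Lindel\"of space is Lindel\"of, the failure of Lindel\"ofness for $C$ forces the failure for $X$, so $X$ is destructible. (Using instead a continuous surjection $q\colon X\to 2^{\omega_1}$ one argues identically: $q$ stays continuous and onto $K$ in $V[G]$ because preimages of the ground-model subbasis are absolute, and a continuous image of a Lindel\"of space is Lindel\"of.)

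The step I expect to be the main obstacle is the structural reduction, namely extracting from $|X|>\mathfrak c$ a copy of, or a surjection onto, $2^{\omega_1}$; this leans on the non-trivial theory of dyadic compacta, whereas the forcing itself is elementary. A secondary point requiring care is the absoluteness bookkeeping in the transfer step---checking that the chosen homeomorphism (or surjection) and the closedness of $C$ survive passage to $V[G]$---but this is routine once one recalls that the forcing changes only the topology generated on a fixed point set, never the point set itself.
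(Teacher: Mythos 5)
Your proof is correct, and its second half takes a genuinely different route from the paper's. The paper obtains the theorem as an immediate consequence of its full characterization of indestructible dyadic spaces (Corollary \ref{dyadic}): indestructibility implies $\mathsf S_1^{\omega_1}(\mathcal O,\mathcal O)$ via the Scheepers--Tall game equivalence (Theorem \ref{st1}); this selection principle fails for $2^{\omega_1}$ by the coordinate covers $\{\pi_\alpha^{-1}[\{0\}],\pi_\alpha^{-1}[\{1\}]\}$ (proof of Lemma \ref{2^k}), so an indestructible dyadic space contains no copy of $2^{\omega_1}$, hence has countable weight by Hagler's theorem, and is therefore a continuous image of $2^\omega$ by \v{S}anin's theorem (Theorem \ref{sanin}), giving $|X|\le\mathfrak{c}$. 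You share the structural crux --- the Efimov--Gerlits/Hagler theorem extracting a closed copy of $2^{\omega_1}$ from uncountable weight, which you rightly flag as the non-elementary ingredient --- but you obtain uncountable weight directly from $|X|>\mathfrak{c}$ via metrizability of second-countable compacta (avoiding \v{S}anin), and, more substantially, you destroy $2^{\omega_1}$ by an explicit countably closed forcing rather than through the game machinery: your cover $U_\alpha=\{f:f(\alpha)\neq g(\alpha)\}$ by ground-model subbasic sets, together with the fact that countably closed forcing adds no new $\omega$-sequences of ordinals (so any putative countable subcover yields a ground-model diagonal point $f^*$ it misses), is a correct and self-contained substitute for the appeal to Theorem \ref{st1}; in effect you have unpacked the forcing direction hidden inside that theorem, your generic $g$ playing the role of a selection from the coordinate covers whose union misses only the point $g$ itself, which is not among the ground-model points. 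Your absoluteness bookkeeping in the transfer step is also sound: the copy stays closed because its complement is a union of ground-model open sets, the homeomorphism survives because both extension topologies are generated by ground-model bases, and closed subspaces of Lindel\"of spaces are Lindel\"of. What your more elementary route gives up is the stronger four-way equivalence of Corollary \ref{dyadic}, from which the paper reads off the cardinality bound in one line.
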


\begin{thm}
\label{noGd->indestr}

Compact $T_2$ spaces in which no point is a $G_\delta$ are
destructible.

\end{thm}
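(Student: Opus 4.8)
The plan is to exhibit, for a compact $T_2$ space $X$ with no $G_\delta$ points, a single countably closed forcing that destroys compactness; by Lemma \ref{countcompact} this is exactly what is needed to conclude that $X$ is destructible. First I would let $\mathbb{P}$ be the set of all nonempty closed $G_\delta$ subsets of $X$, ordered by reverse inclusion, so $p\le q$ means $p\subseteq q$. The generic object $G$ is then a directed family of ground-model closed sets, and the whole strategy is to arrange that $\bigcap G=\emptyset$ in the extension. Once that is secured, $\{X\setminus p:p\in G\}$ is an open cover of $X$ by ground-model open sets, and it has no finite subcover: directedness of $G$ gives, for any $p_1,\dots,p_n\in G$, a single $p\in G$ with $\emptyset\ne p\subseteq p_1\cap\cdots\cap p_n$, so no finitely many of the $X\setminus p_i$ can cover $X$. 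Such a cover witnesses that $X$ is not compact in $V[G]$.

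Two things must be verified: that $\mathbb{P}$ is countably closed and that $\bigcap G=\emptyset$. Countable closure is where compactness of $X$ enters. If $p_0\supseteq p_1\supseteq\cdots$ is a decreasing sequence of conditions, then $\bigcap_n p_n$ is again closed, is $G_\delta$ (a countable intersection of $G_\delta$ sets is $G_\delta$), and is nonempty because a decreasing sequence of nonempty closed subsets of a compact space has nonempty intersection; hence $\bigcap_n p_n$ is a condition below every $p_n$. For $\bigcap G=\emptyset$ it suffices that, for each $x\in X$, the set $D_x=\{p\in\mathbb{P}:x\notin p\}$ is dense, since each $D_x$ lies in the ground model and the generic filter meets it.

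The hard part, and the only place the hypothesis is used, is the density of $D_x$. Given a condition $p$ with $x\in p$, I note that $p$ is a nonempty $G_\delta$ set whereas $x$ is not a $G_\delta$ point; therefore $p\ne\{x\}$, and I may choose $y\in p\setminus\{x\}$. Since compact $T_2$ spaces are normal, Urysohn's lemma yields a continuous $f\colon X\to[0,1]$ with $f(x)=0$ and $f(y)=1$, and then $F=f^{-1}\bigl([\tfrac12,1]\bigr)$ is a closed $G_\delta$ set containing $y$ and omitting $x$. Intersecting $F$ with $p$ gives a condition $q=F\cap p\le p$ with $y\in q$ and $x\notin q$, so $q\in D_x$. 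I expect the only conceptual subtlety to be the recognition that ``no point is $G_\delta$'' is precisely what forbids singleton (indeed any ``small'') conditions, which is what keeps each $D_x$ dense and thereby collapses $\bigcap G$ to $\emptyset$; the remaining steps are routine bookkeeping with countable intersections of open sets.
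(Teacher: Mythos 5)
Your proof is correct, but it takes a genuinely different route from the paper's. The paper proves this statement (its Corollary \ref{compactT2}) by quoting the classical \v Cech--Posp\'i\v sil theorem \cite{cechp} to get an $\omega_1$-\v Cech--Posp\'i\v sil tree in $X$, using it (Proposition \ref{cptree}) to give player One a winning strategy in $\mathsf G_1^{\omega_1}(\mathcal O_X,\mathcal O_X)$, and then invoking the Scheepers--Tall characterization (Theorem \ref{st1}) to conclude destructibility. You instead exhibit an explicit countably closed poset --- the nonempty closed $G_\delta$ subsets of $X$ under reverse inclusion --- and kill compactness directly by a density argument; note that your Urysohn splitting step (every nonempty closed $G_\delta$ condition properly shrinks to a smaller one missing any prescribed point, because no singleton is a $G_\delta$) is exactly the local step that drives the transfinite construction in \cite{cechp}, so in effect you have replaced the tree construction by genericity. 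Your approach buys self-containedness: it avoids both the nontrivial classical theorem and the game-theoretic machinery of Theorem \ref{st1}, needing only Lemma \ref{countcompact} to pass from compactness to Lindel\"ofness. What the paper's route buys is the stronger and reusable intermediate conclusions: One actually has a winning strategy in the $\omega_1$-game played with two-element covers, which feeds the diagram in Section 4 and Problem \ref{bindestr}, and the tree structure recurs later (Lemma \ref{equiv}). Also be aware that your forcing is countably closed \emph{only} because $X$ is compact (a decreasing sequence of closed sets in a mere Lindel\"of space can have empty intersection), so your argument is genuinely special to the compact case, whereas the game framework applies to Lindel\"of spaces generally. Two harmless points you leave implicit and should state: $X\neq\emptyset$ (so that $X\in\mathbb P$), and the trivial half of density, namely that $p\in D_x$ already when $x\notin p$; likewise, verifying closure only for $\omega$-sequences suffices since every countable limit ordinal has cofinality $\omega$.
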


On the other hand, I. Juh\'asz and W. Weiss proved in \cite{scat}
that:

\begin{thm}[Juh\'asz-Weiss \cite{scat}]
\label{jw}

Lindel\"of regular scattered spaces are indestructible.

\end{thm}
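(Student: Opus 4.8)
The plan is to use the game-theoretic characterization of indestructibility provided by Theorem \ref{st1}: it suffices to show that player One has no winning strategy in $\mathsf G_1^{\omega_1}(\mathcal O_X,\mathcal O_X)$, and I will in fact establish the stronger assertion that \emph{Two} has a winning strategy. Writing $X^{(\alpha)}$ for the $\alpha$-th Cantor--Bendixson derivative of $X$ (so $X^{(0)}=X$, $X^{(\alpha+1)}$ is the set of non-isolated points of $X^{(\alpha)}$, and $X^{(\lambda)}=\bigcap_{\alpha<\lambda}X^{(\alpha)}$), scatteredness gives a least ordinal $\rho$ with $X^{(\rho)}=\varnothing$. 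I would prove, by transfinite induction on this height $\rho$, that every Lindel\"of regular scattered space of height $\rho$ admits a winning strategy for Two. Two preliminary facts I would record first: for any subspace $F\subseteq X$ one has $F^{(\alpha)}\subseteq X^{(\alpha)}$ (an easy induction, since a point non-isolated in $F$ is non-isolated in $X$), so passing to subspaces does not raise the height; and a closed subspace of a Lindel\"of space is Lindel\"of, while a closed discrete subspace is countable.

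The base case ($\rho\le 1$, i.e.\ $X$ discrete, hence countable by Lindel\"ofness) is immediate: enumerate $X=\{x_n:n<\omega\}$ and let Two cover $x_n$ at inning $n$. For the successor step $\rho=\gamma+1$, the top level $X^{(\gamma)}$ is closed and discrete, hence countable; enumerate it as $\{x_n:n<\omega\}$. I would have Two spend the first $\omega$ innings choosing, at inning $n$, some member $V_n$ of One's cover containing $x_n$. The residue $Y=X\setminus\bigcup_{n<\omega}V_n$ is then closed (so Lindel\"of and regular), scattered, and disjoint from $X^{(\gamma)}$, whence $Y^{(\gamma)}\subseteq X^{(\gamma)}\cap Y=\varnothing$ and $\mathrm{ht}(Y)\le\gamma<\rho$. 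On the remaining innings $[\omega,\omega_1)$ --- which have order type $\omega_1$ --- Two applies the inductive winning strategy for $Y$ to the traces $\{V\cap Y:V\in\mathcal U_\xi\}$ of One's covers, thereby covering $Y$; together with $X\setminus Y\subseteq\bigcup_{n}V_n$ this covers $X$.

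The limit case is where the main difficulty lies, and is the step I expect to be the real obstacle. Here there is no top level to peel off, so one is tempted to cover the ``low'' parts $X\setminus X^{(\alpha)}$ (which are scattered of height $<\rho$) separately; the trouble is that these sets are \emph{open}, not closed, and open subspaces of Lindel\"of spaces need not be Lindel\"of, so the inductive hypothesis does not apply to them directly. My plan to circumvent this is as follows. First, the family $\{X\setminus X^{(\alpha)}:\alpha<\rho\}$ is an open cover of $X$ (every point has rank $<\rho$, and $\rho$ is a limit), so Lindel\"ofness yields a countable subcover; this forces $\mathrm{cf}(\rho)=\omega$ and produces an increasing sequence $\alpha_n\nearrow\rho$ with $U_n:=X\setminus X^{(\alpha_n)}$ an increasing countable open cover of $X$. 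Now I invoke that a regular Lindel\"of space is paracompact, hence countably paracompact, so by the Dowker shrinking property the increasing open cover $\{U_n\}$ has a closed shrinking: there are closed sets $F_n\subseteq U_n$ with $\bigcup_n F_n=X$. Each $F_n$ is closed (hence Lindel\"of and regular), scattered, and satisfies $F_n\cap X^{(\alpha_n)}=\varnothing$, so $\mathrm{ht}(F_n)\le\alpha_n<\rho$.

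Finally I would assemble the strategy. Partitioning $\omega_1$ into $\omega$ pieces $\langle T_n:n<\omega\rangle$ each of order type $\omega_1$, Two devotes the innings in $T_n$ to the space $F_n$, responding on those innings by the inductive winning strategy for $F_n$ applied to the traces of One's covers on $F_n$. Since each $F_n$ receives a full $\omega_1$-length run it is covered, and $\bigcup_n F_n=X$ gives that the whole play covers $X$; thus Two wins, and by Theorem \ref{st1} the space $X$ is indestructible. The crux of the argument is the limit step, and more precisely the replacement of the non-Lindel\"of open pieces $X\setminus X^{(\alpha_n)}$ by a closed (therefore Lindel\"of) shrinking via countable paracompactness --- this is exactly where regularity is used.
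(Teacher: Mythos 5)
Your proof is correct, but note that the paper itself contains no proof of this theorem: it is quoted from Juh\'asz--Weiss \cite{scat}, so there is no internal argument to compare against. What you establish is in fact formally stronger than what is quoted --- you show that \emph{Two} has a winning strategy in $\mathsf G_1^{\omega_1}(\mathcal O_X,\mathcal O_X)$, which via Theorem \ref{st1} yields indestructibility (Two having a winning strategy precludes One having one) but says more; this game-theoretic induction on Cantor--Bendixson height is essentially the route of the game-theoretic treatment in \cite{schtall}, whereas Juh\'asz and Weiss worked with the forcing extension directly. The delicate points in your argument all check out: subspaces do not increase scattering height (and each derivative $X^{(\alpha)}$ is closed, since the isolated points of any subspace form a relatively open set); the top level in the successor case is closed discrete, hence countable; in the limit case Lindel\"ofness really does force $\mathrm{cf}(\rho)=\omega$, because $X^{(\beta)}\neq\emptyset$ for every $\beta<\rho$ by minimality of $\rho$; and the closed shrinking of the increasing countable cover $U_n=X\setminus X^{(\alpha_n)}$ is available since regular Lindel\"of spaces are paracompact, hence normal and countably paracompact, so the Dowker/Ishikawa characterization applies. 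You could even avoid citing paracompactness here: by regularity pick for each $x$ an open $W_x$ with $\overline{W_x}\subseteq U_{n(x)}$, extract a countable subcover $\{W_{x_k}:k<\omega\}$, and set $F_n=\bigcup\{\overline{W_{x_k}}:k\le n \textrm{ and } n(x_k)\le n\}$ --- a \emph{finite} union of closed sets, contained in $U_n$ because the $U_n$ increase, with $\bigcup_n F_n=X$. The remaining bookkeeping (partitioning $\omega_1$ into countably many blocks of order type $\omega_1$, and lifting Two's trace choices on $F_n$ back to actual members of One's covers of $X$) is handled correctly, so the induction closes and the theorem follows.
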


In particular,

\begin{example}
\label{1pt}

The one-point compactification of any discrete space is
indestructible.

\end{example}

Here it is worth mentioning that a compact Hausdorff space is
scattered if and only if it is Rothberger (folklore; see e.g.
\cite[Proposition 5.5]{leandro}).

We also have:

\begin{thm}
\label{pointsGd}

The Continuum Hypothesis implies that if a compact $T_2$ space has all
points $G_\delta$, then it is indestructible.

\end{thm}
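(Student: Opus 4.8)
The plan is to combine a cardinality bound with the game characterization of Theorem \ref{st1}. First I would note that, since $X$ is compact $T_2$, it is Lindel\"of with $L(X)=\aleph_0$, and that in a compact Hausdorff space character and pseudocharacter agree at every point; hence the hypothesis that all points are $G_\delta$ (i.e.\ $\psi(X)\le\aleph_0$) upgrades to first countability ($\chi(X)\le\aleph_0$). Arhangel'ski\u\i's cardinality inequality $|X|\le 2^{L(X)\cdot\chi(X)}$ then gives $|X|\le 2^{\aleph_0}=\mathfrak c$, which under the Continuum Hypothesis is $\aleph_1$.

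The remaining step — which I claim is purely combinatorial — is the assertion that \emph{every} Lindel\"of space of cardinality $\le\aleph_1$ is indestructible. By Theorem \ref{st1} it suffices to show that One has no winning strategy in $\mathsf G_1^{\omega_1}(\mathcal O_X,\mathcal O_X)$, and for this I would exhibit a winning strategy for Two. Fix an enumeration $X=\{x_\xi:\xi<\lambda\}$ with $\lambda\le\omega_1$, and define Two's strategy $\tau$ as follows: at inning $\xi<\lambda$, given One's cover $\mathcal U_\xi$, let $\tau$ return some $V_\xi\in\mathcal U_\xi$ with $x_\xi\in V_\xi$ (such a $V_\xi$ exists because $\mathcal U_\xi$ covers $X\ni x_\xi$), while at innings $\xi\ge\lambda$ it picks arbitrarily. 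In any play consistent with $\tau$ we then have $x_\xi\in V_\xi$ for every $\xi<\lambda$, so $\bigcup\{V_\xi:\xi<\omega_1\}\supseteq\{x_\xi:\xi<\lambda\}=X$ and Two wins. Thus $\tau$ is winning for Two, One has no winning strategy, and $X$ is indestructible.

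I expect the only genuine content to reside in the cardinality bound $|X|\le\aleph_1$: once that is secured, the game-theoretic half is immediate, because with $\omega_1$ innings available Two can afford to devote the $\xi$-th inning to covering the $\xi$-th point of $X$. The point most worth double-checking is the passage from ``points $G_\delta$'' to first countability, which genuinely uses compactness — it fails for general Lindel\"of Hausdorff spaces — and for which I would rely on the standard coincidence of character and pseudocharacter in compact Hausdorff spaces.
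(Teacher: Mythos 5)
Your proof is correct and is essentially identical to the paper's: the paper likewise uses the coincidence of character and pseudocharacter in compact Hausdorff spaces to get first countability, applies Arhangel'ski\u\i's theorem to obtain $|X|\le\mathfrak c=\aleph_1$ under $\mathrm{CH}$, and then invokes Tall's lemma that Lindel\"of spaces of size $\le\aleph_1$ are indestructible, proved by exactly your strategy of having Two cover the point $x_\xi$ in the $\xi$-th inning and concluding via Theorem \ref{st1}.
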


\begin{proof}

We first recall:

\begin{lemma}[Tall \cite{tall95}]
\label{lealeph1}
Lindel\"of spaces of size $\le\aleph_1$ are indestructible.
\end{lemma}

\begin{proof}
If $X=\{x_\xi:\xi\in\omega_1\}$, then Two gets a winning strategy in
$\mathsf G_1^{\omega_1}(\mathcal O_X,\mathcal O_X)$ by covering the
point $x_\xi$ in the $\xi$-th inning.
Now apply Theorem \ref{st1}.
\end{proof}

Now note that compact Hausdorff spaces with all points $G_\delta$
are first countable and so have cardinality $\le\mathfrak{c}$ by
Arhangel'ski\u\i's Theorem \cite{arh}; thus, if $\mathrm{CH}$ holds,
such spaces are indestructible by Lemma \ref{lealeph1}.
\end{proof}

When some (but not all) points are $G_\delta$'s the situation is less
clear. A new direction for dealing with this problem was provided by
Tall and Usuba \cite{usuba} through a ``one cardinal up'' version
of Borel's Conjecture \cite[p. 123]{borel}.

By results of W. Sierpi\' nski \cite{sierp28} and R. Laver
\cite{laver}, Borel's Conjecture is independent of $\mathrm{ZFC}$. It
is also known (see \cite[Section 5.1]{bonan} and
\cite[Proposition 8]{miller}) to be
equivalent to the statement that a Lindel\"of $T_3$ space is
Rothberger if and only if all of its continuous images in
$[0,1]^\omega$ are countable. In light of this result and Theorems
\ref{st1} and \ref{pawl}, Tall and Usuba formulated the following
statement in analogy to Borel's Conjecture:

\begin{defin}[Tall-Usuba \cite{usuba}]
\label{aleph1BC}

The \emph{$\aleph_1$-Borel Conjecture} is the statement that 
a Lindel\" of $T_3$ space is indestructible if and only if all of its
continuous images in $[0,1]^{\omega_1}$ have cardinality
$\le\aleph_1$.

\end{defin}

Techniques from S. Todor\v cevi\'c
\cite{stevoh} concerning trees and their associated lines enable us to
answer questions of \cite{usuba}; in particular, we show (in Corollary
\ref{equic.BC}):

\begin{thm}
\label{equicBC}

The $\aleph_1$-Borel Conjecture and the existence of an
inaccessible cardinal are equiconsistent.

\end{thm}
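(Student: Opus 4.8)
The statement asserts an \emph{equiconsistency}, so the plan is to prove two relative consistency implications: from the consistency of an inaccessible cardinal produce a model of the $\aleph_1$-Borel Conjecture (Definition \ref{aleph1BC}), and conversely extract from any model of it an inner model with an inaccessible. The unifying device, following Todor\v cevi\'c \cite{stevoh}, is the passage from a tree to the Lindel\"of space obtained from its branches under a lexicographical ordering, combined with the classical fact that the non-existence of Kurepa trees is itself equiconsistent with an inaccessible. Concretely, I expect the large-cardinal strength of $\aleph_1$-BC to be concentrated in the forward half of its biconditional (indestructible spaces have small continuous images), which is a cardinality bound for indestructible Lindel\"of subspaces of $[0,1]^{\omega_1}$, and to be governed precisely by the presence or absence of Kurepa trees.

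For the upper bound I would L\'evy-collapse an inaccessible $\kappa$ to $\omega_2$ with countably closed conditions, which preserves the Continuum Hypothesis and, by Silver's theorem, destroys all Kurepa trees. In this model I would verify both directions of Definition \ref{aleph1BC}. The forward direction reduces, since a continuous image of an indestructible space is again indestructible and every subspace of $[0,1]^{\omega_1}$ has pseudocharacter $\le\aleph_1$, to the claim that an indestructible Lindel\"of space sitting inside $[0,1]^{\omega_1}$ has cardinality $\le\aleph_1$; the key point is that for images confined to $[0,1]^{\omega_1}$ this bound follows from the absence of Kurepa trees rather than requiring a measurable, which is exactly what pins the strength at an inaccessible. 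The reverse direction — that small images force indestructibility — I would establish contrapositively: a destructible space admits, through Todor\v cevi\'c-style analysis of One's winning strategy in $\mathsf G_1^{\omega_1}(\mathcal O_X,\mathcal O_X)$ (Theorem \ref{st1}), a continuous map onto a subspace of $[0,1]^{\omega_1}$ of cardinality $\ge\aleph_2$.

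For the lower bound I would assume $\aleph_1$-BC and argue that there are no Kurepa trees, whence $\omega_2$ is inaccessible in $L$ by the classical implication, yielding the desired inner model. Suppose a Kurepa tree $T$ existed. Since each level of $T$ is countable, a cofinal branch is coded coordinatewise by a point of $[0,1]^{\omega_1}$, so the $\ge\aleph_2$ branches of $T$ inject into $[0,1]^{\omega_1}$. From $T$ I would build a Lindel\"of $T_3$ space whose points realize these branches and which maps continuously onto this image of size $\ge\aleph_2$; if, moreover, the space is shown to be indestructible, it directly contradicts the forward direction of $\aleph_1$-BC. One may hope to read off indestructibility from the Juh\'asz--Weiss theorem (Theorem \ref{jw}) by arranging the space to be regular scattered, but the branches are limit points and resist scatteredness, so indestructibility will in general have to be verified directly rather than through Theorem \ref{jw}.

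The main obstacle is precisely this tree-to-space construction and the simultaneous control of its properties. One must produce a space that is genuinely Lindel\"of and $T_3$, whose continuous image in $[0,1]^{\omega_1}$ provably has cardinality $\ge\aleph_2$, and whose indestructibility can be certified; the tension between capturing all $\ge\aleph_2$ branches as points (needed for the large image) and maintaining the topological regularity and indestructibility of the resulting branch space is the delicate technical core, and it is here that Todor\v cevi\'c's lexicographical machinery \cite{stevoh} must be deployed with care. A secondary difficulty is isolating the exact hypothesis — the absence of Kurepa trees supplied by the collapse — that suffices for the cardinality bound underpinning the forward direction, so that the whole argument stays at the level of an inaccessible rather than a measurable.
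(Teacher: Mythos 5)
Your skeleton matches the paper's proof: the upper bound is Tall--Usuba's Theorem \ref{proj} (Con(inaccessible) $\Rightarrow$ Con($\aleph_1$-BC), via the L\'evy collapse), and the lower bound derives $\neg\mathrm{KH}$ from the $\aleph_1$-Borel Conjecture by turning a Kurepa tree into a branch space violating it, then invokes Solovay's result that $\neg\mathrm{KH}$ implies $\omega_2$ is inaccessible in $\mathbf L$. But the step you explicitly leave open --- certifying indestructibility of the branch space --- is precisely where all the content lies, and you never formulate the criterion that closes it. In the paper, the lexicographically ordered branch space $L_T$ is a \emph{compact} linearly ordered space (Lemma \ref{compl}), so Lindel\"ofness and $T_3$ are automatic and the ``tension'' you worry about dissolves; and Lemma \ref{equiv} shows that $L_T$ is destructible \emph{if and only if} some countably closed forcing adds a new Dedekind cut, iff it adds a new branch of $T$, iff $T$ has a subtree isomorphic to $\langle\mbox{}^{<\omega_1}2,\subseteq\rangle$, iff there is an $\omega_1$-\v Cech-Posp\'i\v sil tree in $L_T$. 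Silver's counting argument (Lemma \ref{kurepano2}) then rules out such a subtree in any $\omega_1$-tree --- the $2^{\aleph_0}$ nodes $t_f$, $f\in\mbox{}^{\omega}2$, would inject into a single countable level --- so the $L_T$ of a Kurepa tree is indestructible. Moreover $w(L_T)\le|T|=\aleph_1$, so $L_T$ embeds into $[0,1]^{\omega_1}$ outright (this is exactly how Theorem \ref{inaccBC} feeds into Corollary \ref{equic.BC}); your coordinatewise coding of branches is unnecessary and does not by itself produce a continuous image of the space you build. Without the Lemma \ref{equiv} $+$ Silver mechanism, your contradiction with $\aleph_1$-BC is not obtained, so this is a genuine gap, not a routine verification.

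Your upper-bound sketch also contains an inverted implication. You claim the cardinality bound for indestructible Lindel\"of subspaces of $[0,1]^{\omega_1}$ ``follows from the absence of Kurepa trees''; what is actually established (here and in \cite{usuba}) is only the converse: $\mathrm{KH}$ yields a counterexample, hence the bound implies $\neg\mathrm{KH}$. No implication from $\neg\mathrm{KH}$ to the bound is known or used --- Tall--Usuba's proof of Theorem \ref{proj} works in the L\'evy collapse model itself, not from $\neg\mathrm{KH}$ as a hypothesis. Similarly, your proposed contrapositive for the reverse direction of the biconditional (``destructible $\Rightarrow$ a continuous image in $[0,1]^{\omega_1}$ of size $\ge\aleph_2$, extracted from One's winning strategy in $\mathsf G_1^{\omega_1}(\mathcal O_X,\mathcal O_X)$'') is asserted without any mechanism and is not something the paper proves; it sidesteps both issues by citing Theorem \ref{proj} as a black box, which is the correct and economical move here.
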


\section{Preliminaries}

Our notation and terminology are standard; that said, in this section
we include some definitions and summarize some known results for the
convenience of the reader. For concepts not defined in this section,
the reader is referred to \cite{eng}, \cite{kanamori} and
\cite{kunen}.

A forcing notion $\langle\mathbb P,\le\rangle$ is \emph{countably
closed} if, whenever $\alpha$ is a countable ordinal and
$(p_\xi)_{\xi\in\alpha}$ is a decreasing sequence in $\mathbb P$,
there is $q\in\mathbb P$ such that $q\le p_\xi$ for all
$\xi\in\alpha$. In our forcing arguments, $\mathbf M$ will always
denote the ground model. If $\langle X,\mathcal{T}\rangle$ is a
topological space in $\mathbf M$ and $\mathbb{P}$ is a
forcing notion, in a generic extension $\mathbf M[G]$ by $\mathbb P$
the set $\mathcal{T}$ might fail to be a topology, so we always consider the
corresponding space $\langle X,\widetilde{\mathcal{T}}\rangle$, where
$\widetilde{\mathcal{T}}$ is the topology on $X$ (in the extension
$\mathbf M[G]$) that has $\mathcal{T}$ as an open base.

If $\alpha$ is an ordinal, we write $\lim(\alpha)=\{\xi\in\alpha:\xi$
is a limit ordinal$\}$.
We denote by $\mathfrak{c}$ the cardinality of the
continuum, i.e. $2^{\aleph_0}$; the Continuum Hypothesis
($\mathrm{CH}$) is the assertion
$\mathfrak{c}=\aleph_1$.

A \emph{Dedekind cut} in a linear order $\langle X,<\rangle$ is a set
$A\subseteq X$ such that $y\in A$ whenever $y<x$ for some $x\in
A$. The linear order $\langle X,<\rangle$ is \emph{complete} if every
Dedekind cut has a least upper bound.

For a set $X$ and a cardinal $\kappa$, we write
$[X]^\kappa=\{Y\subseteq X:|Y|=\kappa\}$ and
$[X]^{<\kappa}=\{Y\subseteq X:|Y|<\kappa\}$. A family $\mathcal F$ of
sets is \emph{centred} if $\bigcap\mathcal A\neq\emptyset$ for every
$\mathcal A\in[\mathcal F]^{<\aleph_0}\setminus\{\emptyset\}$.
The set of all functions from a set $A$ to a set $B$ is denoted by
$\mbox{}^AB$; if $\alpha$ is an ordinal, then we write
$\mbox{}^{<\alpha}B=\bigcup_{\xi\in\alpha}\mbox{}^\xi B$ and
$\mbox{}^{\le\alpha}B=\mbox{}^{<\alpha+1}B$.
For $f,g\in\mbox{}^AB$, the notation $f=^*g$ means that the set
$\{a\in A:f(a)\neq g(a)\}$ is finite.
If $A$ and $B$ are non-empty sets, we write
$Fn(A,B)=\bigcup\{\mbox{}^FB:F\in[A]^{<\aleph_0}\}$.
The two-point set
$\{0,1\}$ will be denoted simply by $2$, and will always be
regarded as a discrete space.
If $X=\prod_{i\in I}X_i$ is a product, then for each $j\in I$ the
function $\pi_j:X\rightarrow X_j$ is the projection $\pi_j((x_i)_{i\in
I})=x_j$.

If $X$ is a topological space and $p\in X$, we say that $p$ is a
$G_\delta$ point of $X$ if the set $\{p\}$ is a countable intersection
of open subsets of $X$. The \emph{weight} of a space $X$ is
$w(X)=\min\{|\mathcal B|:\mathcal B$ is a base for $X\}+\aleph_0$. The
\emph{pseudocharacter} of a $T_1$ space $X$ is
$\psi(X)=\sup\{\psi(x,X):x\in X\}$, where
$\psi(x,X)=\min\{\kappa:\{x\}$ is an intersection of $\kappa$ open
subsets of $X\}+\aleph_0$; for compact $T_2$ spaces,
$\psi(x,X)=\chi(x,X)=\min\{|\mathcal V|:\mathcal V$ is a local base at
$x$ in $X\}+\aleph_0$, the \emph{character} of $x$ in $X$.
If $\langle X,\mathcal{T}\rangle$ is a topological space and $x\in X$,
a \emph{$\pi$-base} for $x$ in $X$ is a family $\mathcal
P\subseteq\mathcal{T}\setminus\{\emptyset\}$ such that every neighbourhood of
$x$ in $X$ includes some element of $\mathcal P$; the
\emph{$\pi$-character} of $\langle X,\mathcal{T}\rangle$, then, is
$\pi\chi(X)=\sup\{\pi\chi(x,X):x\in X\}$, where
$\pi\chi(x,X)=\min\{|\mathcal P|:\mathcal P$ is a $\pi$-base for $x$
in $X\}+\aleph_0\le\chi(x,X)$. A topological space $X$ is said to
include a copy of a topological space $Y$ if it has a subspace
homeomorphic to $Y$.
We say that a topological space $X$ is \emph{scattered} if every
non-empty $Y\subseteq X$ has a point that is isolated in $Y$.
A topological space is \emph{extremally disconnected} if
the closure of every open subset is open.

A \emph{tree} is a strict partial order $\langle T,\le\rangle$ (often
written simply $T$) such that, for each $t\in T$, the set
$T^\downarrow(t)=\{t'\in T:t'<t\}$ is well-ordered. For each $t\in T$,
the \emph{height} of $t$ in $T$, denoted by $\mathrm{ht}_T(t)$, is the
order type of $T^\downarrow(t)$. If $\alpha$ is an
ordinal, the $\alpha$-th \emph{level} of $T$ is $T_\alpha=\{t\in
T:\mathrm{ht}_T(t)=\alpha\}$; $T$ is \emph{rooted} if $|T_0|=1$. The
\emph{height of $T$} is the least ordinal $\eta$ with
$T_\eta=\emptyset$. We say that $T$ is \emph{Hausdorff} if, whenever
$\alpha$ is a limit ordinal and $t,t'\in T_\alpha$ are distinct, the
sets $T^\downarrow(t)$ and $T^\downarrow(t')$ are distinct.
A \emph{subtree} of $\langle T,\le\rangle$ is any subset $T'\subseteq
T$ considered with the restriction of $\le$ to $T'\times T'$ (which
makes $T'$ a tree as well); if a subtree $T'$ is downwards closed,
i.e. $T^\downarrow(t)\subseteq T'$ for all $t\in T'$, then $T'$
is said to be an \emph{initial part} of $T$.

A \emph{chain} in $T$ is any subset of $T$
that is linearly ordered by $\le$. A \emph{branch} is a chain
that is maximal with respect to inclusion of sets ($\subseteq$). The
\emph{cofinality} of a branch $B$
is the least cardinality of a \emph{cofinal} subset of $B$,
i.e. a subset $C\subseteq B$ such that $\forall t\in B\;\exists
t'\in C\;(t\le t')$; equivalently, the cofinality of $B$ is
$\mathrm{cf}(\beta)$, where $\beta$ is the order type
of $B$ with the order induced by $\le$. A branch $B$ is \emph{cofinal}
in $T$ if $B\cap T_\alpha\neq\emptyset$ for all $\alpha$ with
$T_\alpha\neq\emptyset$.

Suppose $\langle T,\le\rangle$ is a Hausdorff tree where, for each
$t\in T$, the set $S(t)$ of all the immediate successors of $t$ in
$\le$ is linearly ordered by a relation $\prec_t$. The
\emph{lexicogra\-phical ordering} on the set of all the branches of
$T$ is the linear order $\prec$ defined by $B\prec B'\leftrightarrow
v\prec_t v'$, where $t$ is the $\le$-greatest element of $B\cap B'$
and $v,v'\in S(t)$ are such that $v\in B$ and $v'\in B'$.

If $\kappa$ is a cardinal, we say that $T$ is a \emph{$\kappa$-tree}
if $T$ has height $\kappa$ and all the levels of $T$ have size
$<\kappa$. A \emph{$\kappa$-Aronszajn tree} is a $\kappa$-tree with no
cofinal branch.
A \emph{Kurepa tree} is an $\omega_1$-tree with at least $\aleph_2$
cofinal branches. \emph{Kurepa's Hypothesis} ($\mathrm{KH}$) is the
statement ``there is a Kurepa tree''.

An uncountable cardinal $\kappa$ is \emph{(strongly) inaccessible} if it
is regular and $2^\lambda<\kappa$ for every cardinal
$\lambda<\kappa$.
If $\kappa$ is inaccessible and there is no
$\kappa$-Aronszajn tree, then $\kappa$ is said to be \emph{weakly
compact}.
It is known by a result of R. Solovay (see
\cite[Theorem 8.11]{stevoh}) that, if $\omega_2$ is not 
inaccessible in the constructible universe $\mathbf L$, then
$\mathrm{KH}$ holds.
Whenever we say that two statements are equiconsistent, we mean that
they are equiconsistent relative to $\mathrm{ZFC}$.

\section{Indestructibility \emph{versus}
  $\mathsf{S}_1^{\omega_1}(\mathcal{O},\mathcal{O})$}

\label{sectionlex}

The following terminology is taken from
\cite{kunentree}:

\begin{defin}
\label{kunen}

Let $\kappa$ be an infinite cardinal. A \emph{$\kappa$-\v
Cech-Posp\'i\v sil tree} in a topolo\-gical space $X$ is an indexed
family $\langle F_s:s\in\mbox{}^{\le\kappa}2\rangle$ satisfying:
\begin{itemize}
\item[$(i)$]
each $F_s$ is a non-empty closed subset of $X$;
\item[$(ii)$]
$F_s\supseteq F_t$ whenever $s\subseteq t$;
\item[$(iii)$]
$F_{s^\smallfrown(0)}\cap F_{s^\smallfrown(1)}=\emptyset$;
\item[$(iv)$]
if $\gamma\le\kappa$ is a non-zero limit ordinal and
$s\in\mbox{}^\gamma 2$, then
$F_s=\bigcap_{\alpha\in\gamma}F_{s\upharpoonright\alpha}$.
\end{itemize}
\end{defin}

\begin{prop}
\label{cptree}

If there is an $\omega_1$-\v Cech-Posp\'i\v sil tree in a
topological space $X$, then player One has a winning strategy in the
game $\mathsf{G}_1^{\omega_1}(\mathcal{O}_X,\mathcal{O}_X)$.

\end{prop}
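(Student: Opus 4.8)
The plan is to have player One descend along a branch of the tree $\langle F_s:s\in\mbox{}^{\le\omega_1}2\rangle$, choosing covers that force Two to commit, inning by inning, to a proper ``side'' of the tree. Concretely, One will construct during the play an increasing sequence $(s_\xi)_{\xi\le\omega_1}$ with $s_\xi\in\mbox{}^\xi 2$ and $s_\eta\subseteq s_\xi$ for $\eta\le\xi$, maintaining the invariant
\[
F_{s_\xi}\cap\bigcup_{\eta<\xi}V_\eta=\emptyset,
\]
where $V_\eta$ denotes Two's choice in inning $\eta$. If One succeeds in carrying this recursion all the way to $\xi=\omega_1$, then by conditions $(i)$ and $(iv)$ the set $F_{s_{\omega_1}}=\bigcap_{\xi<\omega_1}F_{s_\xi}$ is a non-empty closed subset of $X$ disjoint from $\bigcup_{\xi<\omega_1}V_\xi$; hence $\bigcup_{\xi<\omega_1}V_\xi\neq X$ and Two loses, which is exactly what we want.

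The engine of the strategy is the successor step, and here condition $(iii)$ does all the work. Given $s_\xi$, the children's closed sets $F_{s_\xi^\smallfrown(0)}$ and $F_{s_\xi^\smallfrown(1)}$ are disjoint, so their complements form the two-element open cover
\[
\mathcal U_\xi=\bigl\{\,X\setminus F_{s_\xi^\smallfrown(0)},\ X\setminus F_{s_\xi^\smallfrown(1)}\,\bigr\},
\]
which is the cover One plays in inning $\xi$. Whichever member Two selects, One extends the branch into the child whose closed set was just removed: if Two plays $V_\xi=X\setminus F_{s_\xi^\smallfrown(i)}$, set $s_{\xi+1}=s_\xi^\smallfrown(i)$. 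Then $V_\xi$ is disjoint from $F_{s_{\xi+1}}$ by construction, while $F_{s_{\xi+1}}\subseteq F_{s_\xi}$ by $(ii)$ preserves disjointness from every earlier $V_\eta$; thus the invariant survives the successor step.

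At a limit stage $\lambda\le\omega_1$ One puts $s_\lambda=\bigcup_{\xi<\lambda}s_\xi$, and condition $(iv)$ yields $F_{s_\lambda}=\bigcap_{\xi<\lambda}F_{s_\xi}$, which remains disjoint from each $V_\eta$ (by $(ii)$, since $F_{s_\lambda}\subseteq F_{s_{\eta+1}}$) and, crucially, is \emph{non-empty} by $(i)$. I expect this non-emptiness at limits to be the only genuinely delicate point: it is precisely where the \v Cech-Posp\'i\v sil structure is indispensable, since without the continuity clause $(iv)$ and the standing requirement $(i)$ that every $F_s$ be non-empty, the branch One follows could run into an empty intersection and the strategy would collapse. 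Finally, I would note that this prescription is a legitimate strategy for One, since the node $s_\xi$ --- and hence One's move $\mathcal U_\xi$ --- is computed recursively from the record $(V_\eta)_{\eta<\xi}$ of Two's previous moves.
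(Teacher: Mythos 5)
Your proof is correct and is essentially the paper's own argument: in both, One plays the two-element cover $\{X\setminus F_{s^\smallfrown(0)},X\setminus F_{s^\smallfrown(1)}\}$ (a cover by condition $(iii)$) and follows the branch determined by Two's choices, so that at the end Two's sets miss the non-empty set $F_t=\bigcap_{\beta\in\omega_1}F_{t\upharpoonright\beta}$ given by $(i)$, $(ii)$ and $(iv)$. Your explicit invariant is just a slightly more verbose bookkeeping of the same strategy, and every step checks out.
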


\begin{proof}

Let $\langle F_s:s\in\mbox{}^{\le\omega_1}2\rangle$ be an
$\omega_1$-\v Cech-Posp\'i\v sil tree in $X$. For each
$s\in\mbox{}^{\le\omega_1}2$, consider $U_s=X\setminus F_s$. This
defines the following strategy for player One in
$\mathsf{G}_1^{\omega_1}(\mathcal{O}_X,\mathcal{O}_X)$: One starts the
play with the open cover $\{U_{(0)},U_{(1)}\}$; if $\alpha\in\omega_1$
and $s\in\mbox{}^{\alpha}2$ are such that, for each $\beta\in\alpha$,
Two's move in the $\beta$-th inning of this play was
$U_{s\upharpoonright(\beta+1)}$,
then One's move in the $\alpha$-th inning is
$\{U_{s^\smallfrown(0)},U_{s^\smallfrown(1)}\}$. When the play ends,
Two will have played the sets
$(U_{t\upharpoonright(\beta+1)})_{\beta\in\omega_1}$
for some $t\in\mbox{}^{\omega_1}2$; thus One wins since
$\bigcup\{U_{t\upharpoonright(\beta+1)}:\beta\in\omega_1\}=U_t\neq X$.
\end{proof}

\begin{corol}
\label{compactT2}

Every compact Hausdorff space with no $G_\delta$ points is
destructible.

\end{corol}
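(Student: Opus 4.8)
The plan is to connect Corollary \ref{compactT2} to the machinery just developed: by Proposition \ref{cptree} and Theorem \ref{st1}, it suffices to show that every compact Hausdorff space $X$ in which no point is a $G_\delta$ contains an $\omega_1$-\v Cech-Posp\'i\v sil tree. Indeed, the existence of such a tree gives One a winning strategy in $\mathsf{G}_1^{\omega_1}(\mathcal O_X,\mathcal O_X)$, which by Theorem \ref{st1} makes $X$ destructible. So the entire content of the corollary reduces to a construction of the tree.

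First I would build the indexed family $\langle F_s:s\in{}^{\le\omega_1}2\rangle$ by recursion on the length of $s$, maintaining as an inductive invariant that each $F_s$ is a non-empty closed (hence compact) subset that is \emph{not} a singleton --- in fact I would like each $F_s$ to fail to be a $G_\delta$, or at least to contain more than one point so that it can be split. Starting from $F_\emptyset=X$, the successor step is the crux: given a non-singleton compact $F_s$, I pick two distinct points and use Hausdorffness to find disjoint open sets separating them, then set $F_{s^\smallfrown(0)}$ and $F_{s^\smallfrown(1)}$ to be the closures (within $F_s$) of suitably shrunk pieces, arranged so that condition $(iii)$, disjointness, holds and condition $(ii)$, nesting, is automatic. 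At limit stages $\gamma$ I define $F_s=\bigcap_{\alpha\in\gamma}F_{s\upharpoonright\alpha}$, which is forced by $(iv)$; by compactness this intersection of a decreasing chain of non-empty closed sets is non-empty, so $(i)$ is preserved.

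The hard part will be guaranteeing that the limit-stage sets $F_s$ are still large enough to continue splitting --- i.e. that the recursion does not collapse to singletons before reaching length $\omega_1$. This is exactly where the hypothesis that no point is a $G_\delta$ enters. If at some limit stage $\gamma<\omega_1$ the set $F_s=\bigcap_{\alpha<\gamma}F_{s\upharpoonright\alpha}$ were a singleton $\{p\}$, then $\{p\}$ would be expressed as a countable intersection of the closed sets $F_{s\upharpoonright\alpha}$; passing to open neighbourhoods (using compact Hausdorff normality to interpose open sets between the nested closed sets) would exhibit $\{p\}$ as a countable intersection of open sets, making $p$ a $G_\delta$ point and contradicting the hypothesis. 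Thus the invariant ``$F_s$ is not a singleton'' survives all countable limit stages, and the recursion runs through all of ${}^{\le\omega_1}2$.

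To make this argument clean I would strengthen the inductive hypothesis so that it is preserved at limits without extra work: for instance, carry along at each node a point $p_s\in F_s$ together with a strictly decreasing witness that $F_s$ is not locally a $G_\delta$ at $p_s$, or more simply arrange the splitting so that $\psi(p,F_s)>|\gamma|$ is maintained. The one routine verification I would then dispatch is that the four Definition \ref{kunen} conditions hold: $(i)$ and $(iv)$ by construction and compactness, $(ii)$ by taking closures inside the parent, and $(iii)$ by the Hausdorff separation at each successor step. Once the tree is in hand the corollary follows immediately from Proposition \ref{cptree} and Theorem \ref{st1}, with Lemma \ref{countcompact} letting us phrase the conclusion as destructibility in the compact sense.
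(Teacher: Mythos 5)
Your overall reduction is exactly the paper's: the paper's proof of this corollary is a two-line citation --- the classical \v Cech--Posp\'i\v sil theorem \cite{cechp} supplies the $\omega_1$-\v Cech-Posp\'i\v sil tree, and Proposition \ref{cptree} plus Theorem \ref{st1} finish. The gap is in your inline proof of that cited theorem, specifically at the limit stage. Your claim that if $F_s=\bigcap_{\alpha<\gamma}F_{s\upharpoonright\alpha}$ were a singleton $\{p\}$ then normality would let you ``interpose open sets between the nested closed sets'' and exhibit $p$ as a $G_\delta$ point is false: a singleton that is a countable (even decreasing) intersection of closed sets need not be a $G_\delta$ point. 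Concretely, in $X=(\omega_1+1)\times(\omega+1)$ the point $p=(\omega_1,\omega)$ satisfies $\{p\}=\bigcap_{n\in\omega}\bigl(\{\omega_1\}\times[n,\omega]\bigr)$, a decreasing intersection of infinite closed sets, yet $p$ is not a $G_\delta$ point of $X$ (any countable family of open sets containing $p$ has intersection containing a tail of $(\omega_1+1)\times\{\omega\}$). The interposition you invoke requires $F_{n+1}\subseteq\mathrm{int}\,F_n$, which normality does not provide and which cannot be maintained through the recursion: sets obtained as limit-stage intersections typically have empty interior in $X$, so no set chosen afterwards can lie in their interior. Your fallback suggestions don't repair this either: maintaining $\psi(p,F_s)>|\gamma|$ is a condition on pseudocharacter \emph{relative to $F_s$}, which the hypothesis (no point is $G_\delta$ \emph{in $X$}) does not control --- a point can easily be $G_\delta$, even isolated, in a closed subspace without being $G_\delta$ in $X$.

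The standard repair, and the actual content of the \v Cech--Posp\'i\v sil argument, is to carry the invariant that each $F_s$ is a non-empty \emph{closed $G_\delta$ subset of $X$}. At successor stages, pick distinct $x,y\in F_s$ (possible, since a singleton closed $G_\delta$ would contradict the hypothesis) and use complete regularity to choose disjoint zero-set neighbourhoods: take continuous $f:X\rightarrow[0,1]$ with $f(x)=0$ and $f\equiv 1$ outside a neighbourhood of $x$ missing $y$, and set $F_{s^\smallfrown(0)}=F_s\cap f^{-1}\bigl[[0,1/2]\bigr]$, similarly for $F_{s^\smallfrown(1)}$; these are again non-empty closed $G_\delta$'s and are disjoint. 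At countable limit stages the intersection is a countable intersection of closed $G_\delta$'s, hence itself a closed $G_\delta$, non-empty by compactness; and now the singleton case is excluded \emph{by definition}, with no interposition needed, since $\{p\}=\bigcap_{n\in\omega}W_n$ with $W_n$ open says precisely that $p$ is a $G_\delta$ point. With that invariant your recursion runs through all of ${}^{\le\omega_1}2$ (at the top level $s\in{}^{\omega_1}2$ only non-emptiness is needed, which compactness gives), and the rest of your argument --- Proposition \ref{cptree}, Theorem \ref{st1}, and Lemma \ref{countcompact} to phrase the conclusion as destructibility of a compact space --- is correct and identical to the paper's.
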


\begin{proof}

It is a famous theorem \cite{cechp} that, if $X$ is a compact
Hausdorff space with no $G_\delta$ points, then there is an
$\omega_1$-\v Cech-Posp\'i\v sil tree in $X$. The result
then follows from Theorem \ref{st1} and Proposition \ref{cptree}.
\end{proof}

\begin{corol}
\label{convseq}

If an infinite compact Hausdorff space $X$ does not contain non-trivial
convergent sequences, then $X$ is destructible.

\end{corol}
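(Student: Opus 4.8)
The plan is to combine Corollary \ref{compactT2} with a Cantor--Bendixson analysis, reducing to a closed subspace of $X$ that has no $G_\delta$ points. First I would record two elementary facts. (i) \emph{Destructibility passes from a closed subspace to the whole space}: if $Y$ is closed in the compact space $X$, then $Y$ stays closed in every countably closed extension (the generated topology $\widetilde{\mathcal T}$ refines $\mathcal T$, so $X\setminus Y$ remains open), and the subspace topology $Y$ inherits from $\widetilde{\mathcal T}$ is exactly the generated topology of $Y$; since closed subspaces of Lindel\"of spaces are Lindel\"of, if $Y$ fails to be Lindel\"of in some extension then so does $X$. (ii) \emph{A compact Hausdorff space with no isolated points and no non-trivial convergent sequences has no $G_\delta$ points}: for compact $T_2$ spaces $\psi=\chi$, so a $G_\delta$ point $p$ would have a countable decreasing local base $(V_n)_n$, and choosing $x_n\in V_n\setminus\{p\}$ (possible since $p$ is not isolated) would yield a sequence converging to $p$ whose terms avoid its limit, hence a non-trivial convergent sequence.

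Next I would run the Cantor--Bendixson process: let $K$ be the stable value of the iterated derived-set operator, i.e. the largest closed subspace of $X$ with no isolated points. If $K\neq\emptyset$, then $K$ is a compact Hausdorff space with no isolated points which, being a subspace of $X$, inherits the absence of non-trivial convergent sequences; by (ii) it has no $G_\delta$ points, so $K$ is destructible by Corollary \ref{compactT2}, and then $X$ is destructible by (i). Thus it remains only to rule out $K=\emptyset$, that is, to show that an infinite \emph{scattered} compact Hausdorff space must contain a non-trivial convergent sequence.

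This last point is the crux, and I would prove it by exhibiting a copy of the convergent sequence $\omega+1$. The isolated points of a scattered space are dense, so an infinite scattered compact $X$ has an infinite set $I$ of isolated points; fix a countably infinite $A\subseteq I$. Then $\overline A$ is compact, $A$ is precisely its set of isolated points, and $D:=\overline A\setminus A$ is a nonempty closed scattered subspace (nonempty because an infinite closed discrete set cannot live in a compact space). Pick $q$ isolated in $D$; then $\{q\}$ is clopen in $D$, so $q$ and the closed set $D\setminus\{q\}$ are disjoint closed subsets of the normal space $\overline A$. By regularity choose an open $W\ni q$ with $\overline W\cap(D\setminus\{q\})=\emptyset$, so that $\overline W\cap D=\{q\}$ and hence $\overline W\setminus\{q\}\subseteq A$ consists of isolated points; moreover $W\cap A$ is infinite, since $q$ is a limit point of $A$. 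Therefore $\overline W$ is a compact Hausdorff space with a single non-isolated point $q$ and countably many isolated points, i.e. the one-point compactification of a countable discrete space, which is homeomorphic to $\omega+1$. This produces a non-trivial convergent sequence in $X$, the desired contradiction.

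The main obstacle is exactly this scattered case. Corollary \ref{compactT2} disposes of spaces with no $G_\delta$ points, but a space such as $\beta\omega$ \emph{does} have $G_\delta$ points (all of them isolated), so the real work is to locate a crowded closed subspace — which in turn forces the careful verification that the scattered alternative is vacuous for spaces without non-trivial convergent sequences.
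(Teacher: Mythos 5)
Your proposal is correct and takes essentially the same route as the paper's proof: a Cantor--Bendixson decomposition, the observation that the perfect kernel (having no isolated points and no non-trivial convergent sequences) has no $G_\delta$ points so that Corollary \ref{compactT2} applies, closed-heredity of indestructibility, and the key lemma that an infinite scattered compact Hausdorff space contains a non-trivial convergent sequence (the paper's Lemma \ref{scatseq}). The only cosmetic differences are that you verify closed-heredity directly by a forcing argument rather than via Lemma \ref{preserv} and Theorem \ref{st1}, and that in the scattered case you work inside $\overline{A}$ for a countable set $A$ of isolated points to produce a literal copy of $\omega+1$, whereas the paper picks a point isolated in the set of non-isolated points and obtains the one-point compactification of a (possibly uncountable) discrete set.
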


In the proof of Corollary \ref{convseq}, we shall make use of the
following (probably folklore) fact:

\begin{lemma}
\label{scatseq}

If an infinite compact Hausdorff space $X$ is scattered, then there is
a non-trivial convergent sequence in $X$.

\end{lemma}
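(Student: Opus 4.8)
The plan is to reduce $X$ to a subspace which is homeomorphic to the one-point compactification of an infinite discrete space, since inside such a space producing a non-trivial convergent sequence is immediate. First I would let $X'$ denote the set of non-isolated points of $X$ (its derived set), which is closed because $X$ is $T_1$. If $X'$ were empty, then $X$ would be discrete and compact, hence finite, contradicting the hypothesis that $X$ is infinite; so $X'\neq\emptyset$. Since $X$ is scattered, the non-empty subset $X'$ has a point $p$ that is isolated in $X'$, so I may fix an open $U\ni p$ with $U\cap X'=\{p\}$.

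Next I would use that compact Hausdorff spaces are regular to choose an open set $W$ with $p\in W\subseteq\overline{W}\subseteq U$. The set $\overline{W}$ is closed, hence compact, and every point of $\overline{W}\setminus\{p\}\subseteq U\setminus\{p\}$ lies outside $X'$ and is therefore an isolated point of $X$, so it is also isolated in $\overline{W}$. On the other hand $p$, being a non-isolated point of $X$, is a limit point of its neighbourhood $W$, so it is the unique non-isolated point of $\overline{W}$. In particular $W$ contains infinitely many points of $X$, so $D:=\overline{W}\setminus\{p\}$ is an infinite discrete space.

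It then remains to observe that $\overline{W}$ is precisely the one-point compactification of $D$: if $N$ is any open neighbourhood of $p$ in $\overline{W}$, then $\overline{W}\setminus N$ is a closed, hence compact, subset of the discrete space $D$, and is therefore finite; thus every neighbourhood of $p$ is cofinite in $\overline{W}$. Consequently, enumerating any countably infinite subset of $D$ as a sequence of distinct points $(x_n)_{n\in\omega}$, each neighbourhood of $p$ omits only finitely many of the $x_n$, so $x_n\to p$ while $x_n\neq p$ for every $n$; this is the desired non-trivial convergent sequence. The one delicate point is the passage from a \emph{limit point} to an \emph{actual convergent sequence}, since $X$ need not be first countable: isolating a single non-isolated point $p$ by means of scatteredness and regularity is exactly the step that converts that limit point into a genuinely sequential situation.
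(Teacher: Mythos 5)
Your proof is correct and follows essentially the same route as the paper's: both isolate a point $p$ of the derived set via scatteredness, shrink to a closed (compact) neighbourhood by regularity, and recognize that neighbourhood as the one-point compactification of an infinite discrete set, from which a non-trivial sequence converging to $p$ is extracted. The only difference is that you spell out the cofinite-neighbourhood verification that the paper leaves implicit.
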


\begin{proof}

Let $I\subseteq X$ be the set of isolated points of $X$. As $X$
is compact Hausdorff, we cannot have $I=X$, for otherwise $X$ would be
finite. Thus $X\setminus I\neq\emptyset$. Since $X$ is scattered,
there are $x\in X\setminus I$ and a neighbourhood $V$ of $x$
such that $V\cap(X\setminus I)=\{x\}$, i.e. $V\subseteq
I\cup\{x\}$. By regularity of $X$, we may assume that $V$ is closed,
hence compact. Since $x\notin I$, we have that $x$ is
not isolated in $V$; in particular, $V$ is
infinite. It follows that $V$ is homeomorphic to the one-point
compactification of the infinite discrete set $V\cap I$; therefore,
there is a non-trivial convergent sequence in $V$, and hence in $X$.
\end{proof}

\begin{proof}[Proof of Corollary \ref{convseq}]

Let $X=S\dot\cup P$ be the Cantor-Bendixson decomposition of $X$,
where $S$ is an open scattered subspace of $X$ and $P$ is a closed
subspace of $X$ no point of which is isolated (in $P$). If we had
$P=\emptyset$, then $X$ would be compact scattered $T_2$, and hence
would have a non-trivial convergent sequence by Lemma \ref{scatseq};
therefore, $P\neq\emptyset$. As no point of $P$ is isolated, it
follows that $\chi(x,P)\ge\aleph_1$ for all $x\in P$, since otherwise
there would be a non-trivial sequence converging to a point of first
countability of $P$. By Corollary \ref{compactT2}, $P$ is
destructible; therefore, $X$ is destructible since indestructibility
is hereditary with respect to closed subspaces --- see Lemma
\ref{preserv}.
\end{proof}

We now turn to the proof of Theorem \ref{lex}.

For any distinct $f,g\in\mbox{}^{\omega_1}2$, define
$\Delta(f,g)=\min\{\xi\in\omega_1:f(\xi)\neq g(\xi)\}.$
Let $\prec$ be the lexicographical ordering on the set
$\mbox{}^{\omega_1}2$, i.e., for any distinct
$f,g\in\mbox{}^{\omega_1}2$ we have that $f\prec g$ if and only if
$f(\Delta(f,g))=0$ and $g(\Delta(f,g))=1$.
Finally, let $X$ be the
linearly ordered topological space $\mbox{}^{\omega_1}2$ obtained from
the ordering $\prec$.

The following fact is well-known (see e.g. \cite[Lemma
13.17]{gil}):

\begin{lemma}
\label{supinf}

Every non-empty subset of $X$ has a least upper
bound and a greatest lower bound.

\end{lemma}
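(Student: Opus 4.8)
The plan is to prove the completeness of the lexicographic order on $\mbox{}^{\omega_1}2$ by directly exhibiting the least upper bound of an arbitrary nonempty $A \subseteq X$; the greatest-lower-bound statement follows by the symmetric argument (or by applying the supremum result to the order-reversed copy). So fix $\emptyset \neq A \subseteq \mbox{}^{\omega_1}2$ and let me describe how to construct $\sup A$ as an element $s \in \mbox{}^{\omega_1}2$.

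\medskip

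First I would build the candidate $s$ by transfinite recursion on the coordinates $\xi \in \omega_1$. The natural guess is that $s(\xi)$ records, at each stage, whether there is still some element of $A$ that agrees with $s$ below $\xi$ and is ``pushing upward.'' Concretely, at stage $\xi$, having defined $s\!\upharpoonright\!\xi$, let $A_\xi = \{f \in A : f\!\upharpoonright\!\xi = s\!\upharpoonright\!\xi\}$ be the set of elements agreeing with $s$ on all coordinates below $\xi$; then set $s(\xi) = 1$ if some $f \in A_\xi$ has $f(\xi) = 1$, and $s(\xi) = 0$ otherwise. (One should check the recursion is coherent, i.e. that the ``agreement'' set is tracked correctly through limit stages, which is where clause $(iv)$-style thinking about limits of initial segments matters.) Intuitively this picks out, coordinate by coordinate, the lexicographically largest ``limit direction'' still witnessed by $A$.

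\medskip

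Next I would verify the two defining properties of a supremum. For the \emph{upper bound} property, I would show $f \preceq s$ for every $f \in A$: given $f \in A$ with $f \neq s$, look at $\delta = \Delta(f,s)$; by the construction of $s(\delta)$ one argues that $f(\delta) = 0$ and $s(\delta) = 1$ cannot fail in the wrong direction, so $f \prec s$. For the \emph{least} upper bound property, suppose $t \prec s$ is any strictly smaller element; letting $\eta = \Delta(t,s)$ so that $t(\eta) = 0$ and $s(\eta) = 1$, I would use the fact that $s(\eta) = 1$ was forced by the existence of some $f \in A_\eta$ with $f(\eta) = 1$, and conclude $t \prec f$, so $t$ is not an upper bound of $A$. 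Thus $s$ is the least upper bound.

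\medskip

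The main obstacle I anticipate is handling the limit coordinates correctly and confirming that $s$ so constructed actually lies in $\mbox{}^{\omega_1}2$ and genuinely bounds $A$ from above rather than, say, being strictly exceeded by some $f \in A$ that keeps agreeing with $s$ arbitrarily far out. The delicate point is the case analysis at $\Delta(f,s)$: I must ensure the recursion never sets $s(\xi) = 0$ while some still-agreeing $f \in A_\xi$ insists on $f(\xi) = 1$, and conversely that setting $s(\xi) = 1$ is always justified by a genuine witness in $A$ so that the least-upper-bound direction goes through. Since this is the standard completion-by-Dedekind-cuts phenomenon for lexicographic powers of $2$, I expect the argument to be routine once the recursion is set up cleanly, and indeed the result is quoted as well known, so I would keep the write-up brief and refer to \cite{gil} for the details.
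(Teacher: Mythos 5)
Your construction is correct, and since the paper gives no argument for this lemma at all --- it simply cites Lemma 13.17 of \cite{gil} --- your self-contained proof is a legitimate substitute rather than a rival to anything in the text. In fact it is exactly the recursion the paper does spell out later, in the proof of Lemma \ref{compl}, for the more general lexicographic spaces $L_T$: there one sets $f(\alpha)=\sup\{\tilde s(\alpha):s\in A\textrm{ and }s\upharpoonright\alpha=f\upharpoonright\alpha\}$, with the convention $\sup\emptyset=0$ playing the role of your ``otherwise $s(\xi)=0$'' clause. One reassurance about the obstacle you flag: the limit-stage coherence worry is vacuous. At every stage $\xi$, successor or limit, $s\upharpoonright\xi$ is already fully determined, so $A_\xi=\{f\in A:f\upharpoonright\xi=s\upharpoonright\xi\}$ is well defined; the family $(A_\xi)_{\xi\in\omega_1}$ is decreasing, can first become empty only at a limit stage (at a successor $\xi+1$, either $s(\xi)=1$ and its witness survives into $A_{\xi+1}$, or $s(\xi)=0$ and all of $A_\xi$ survives), and once it is empty $s$ simply continues with $0$'s --- precisely the phenomenon recorded in Lemma \ref{limit}$(a)$. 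Crucially, neither of your two verifications needs $A_\xi\neq\emptyset$: for the upper-bound claim, given $f\in A$ with $\delta=\Delta(f,s)$, the function $f$ itself belongs to $A_\delta$, so $f(\delta)=1$ would have forced $s(\delta)=1=f(\delta)$, contradicting the choice of $\delta$; hence $f(\delta)=0$, $s(\delta)=1$ and $f\prec s$ (this makes precise your slightly garbled ``cannot fail in the wrong direction'' step). For minimality, $s(\eta)=1$ is set only when some $g\in A_\eta$ with $g(\eta)=1$ exists, and then any $t$ with $\Delta(t,s)=\eta$ and $t(\eta)=0$ satisfies $t\prec g$, so $t$ is not an upper bound. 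The infimum does follow by the $0\leftrightarrow 1$ swap, which is an order-reversing bijection of $\mbox{}^{\omega_1}2$, as you say; so the write-up is complete once these checks are recorded.
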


As $X$ is a linearly ordered space, Lemma \ref{supinf} can be restated
as:

\begin{lemma}
\label{compact}

$X$ is compact.

\end{lemma}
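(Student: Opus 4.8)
The plan is to derive compactness from Lemma \ref{supinf} via the standard characterization of compactness for linearly ordered topological spaces (LOTS): a LOTS is compact if and only if every non-empty subset has both a supremum and an infimum, equivalently if and only if the order is complete and has a least and greatest element. Since Lemma \ref{supinf} already guarantees suprema and infima of arbitrary non-empty subsets, the whole non-empty set $X$ in particular has a least element (its infimum, namely the constant function $0$) and a greatest element (its supremum, the constant function $1$), and every Dedekind cut has a least upper bound, so $X$ is a complete dense-or-discrete linear order with endpoints. The task is then to show this order-completeness plus endpoints yields compactness.

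First I would fix an arbitrary cover of $X$ by open intervals of the order topology (sub-basic open sets), which by the Alexander Subbase Lemma suffices to test compactness; alternatively I would argue directly that every open cover has a finite subcover. The cleanest direct route is the classical "sup of the good set" argument: given an open cover $\mathcal{U}$, let $A$ be the set of points $x \in X$ such that the initial segment $\{y \in X : y \preceq x\}$ is covered by finitely many members of $\mathcal{U}$. Using Lemma \ref{supinf}, $A$ has a least upper bound $s$; I would then show $s \in A$ (by covering $s$ with some $U \in \mathcal{U}$, whose interval reaches below $s$ to a point already in $A$) and that $s$ is the maximum of $X$ (if not, $U$ extends past $s$, contradicting that $s$ bounds $A$). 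This shows the whole of $X$, being the initial segment below its greatest element, is finitely covered.

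The main obstacle I anticipate is the bookkeeping at the two ends of the order and the handling of the case where a point fails to have an immediate predecessor: when I pick $U \in \mathcal{U}$ containing the supremum $s$ and want to conclude that a strictly smaller point already lies in $A$, I must produce an element of $X$ strictly between the left endpoint of $U$ and $s$, which requires knowing that the order has no "gaps" — precisely the content guaranteed by the existence of infima and suprema in Lemma \ref{supinf}. One has to be careful that $U$ could be a sub-basic ray rather than a bounded interval, and that $s$ might itself be the left endpoint behavior at a point with no predecessor; these are routine but need the completeness supplied by the lemma. Everything else — verifying that $X$ has a minimum and maximum, and that the good set $A$ is an initial segment — is immediate from the definition of $\prec$ and from Lemma \ref{supinf}.

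Since the paper explicitly invokes that $X$ is a LOTS and restates Lemma \ref{supinf} as the compactness claim, I expect the author's actual proof to simply cite the standard theorem that a linearly ordered space with suprema and infima for all non-empty subsets is compact, making the body of the proof a one-line appeal rather than the full sup-argument sketched above; my plan reconstructs that appealed-to theorem.
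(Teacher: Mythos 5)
Your proposal is correct and takes essentially the same route as the paper: the paper gives no proof body at all, simply restating Lemma \ref{supinf} as compactness via the standard fact that a linearly ordered topological space in which every non-empty subset has a least upper bound and a greatest lower bound is compact --- exactly the folklore theorem your ``sup of the good set'' argument reconstructs. One minor remark: in the step where you fear needing the order to have no gaps, nothing of the sort is required --- since $s=\sup A$, any $a\prec s$ fails to be an upper bound of $A$, so a point of $A$ in the interval $(a,s]$ exists by the definition of supremum alone (and the jump case, where $s$ has an immediate successor, is handled by covering that successor directly), so your anticipated obstacle is spurious and the argument goes through as sketched.
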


Next we will show that $X$ satisfies the hypotheses of Corollary
\ref{compactT2}. In order to do so, we shall make use of the following
two lemmas.

\begin{lemma}
\label{sucpred}

Let $f,g\in\mbox{}^{\omega_1}2$ be such that $f\prec g$. Then
the open interval $(f,g)$ is empty if and only if for every
$\xi\in\omega_1$ with $\xi>\Delta(f,g)$ we have $f(\xi)=1$ and
$g(\xi)=0$.

\end{lemma}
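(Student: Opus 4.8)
Write $\delta=\Delta(f,g)$; since $f\prec g$ we have $f\upharpoonright\delta=g\upharpoonright\delta$, $f(\delta)=0$ and $g(\delta)=1$. The plan is to prove the two implications separately, both reducing to a comparison of the first-difference index $\Delta$ with $\delta$. For the ``if'' direction (the tail condition forces $(f,g)=\emptyset$) I would argue by contradiction; for the ``only if'' direction I would argue by contraposition, explicitly manufacturing a point strictly between $f$ and $g$ whenever the tail condition fails.

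For the ``if'' direction, assume $f(\xi)=1$ and $g(\xi)=0$ for every $\xi>\delta$, and suppose toward a contradiction that some $h$ satisfies $f\prec h\prec g$. Put $\alpha=\Delta(f,h)$, so that $f\upharpoonright\alpha=h\upharpoonright\alpha$, $f(\alpha)=0$ and $h(\alpha)=1$. I would rule out each of the three possibilities $\alpha<\delta$, $\alpha=\delta$, $\alpha>\delta$. If $\alpha<\delta$, then $f$, $g$ and $h$ all agree below $\alpha$ while $h(\alpha)=1$ and $g(\alpha)=f(\alpha)=0$, whence $g\prec h$, contradicting $h\prec g$. If $\alpha=\delta$, then $h\upharpoonright(\delta+1)=g\upharpoonright(\delta+1)$, so $\Delta(h,g)>\delta$; but at that index $g$ takes the value $0$ by hypothesis, forcing $h$ to take the value $1$ and hence $g\prec h$, again a contradiction. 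Finally $\alpha>\delta$ is impossible, because $f(\alpha)=1\neq 0$ there. Thus no such $h$ exists and $(f,g)=\emptyset$.

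For the ``only if'' direction I prove the contrapositive: if the tail condition fails, then $(f,g)\neq\emptyset$. Suppose first that $f(\xi_0)=0$ for some $\xi_0>\delta$, and define $h$ by letting it agree with $f$ below $\xi_0$, setting $h(\xi_0)=1$, and taking $h(\xi)=0$ for $\xi>\xi_0$. Then $\Delta(f,h)=\xi_0$ gives $f\prec h$, while $h\upharpoonright(\delta+1)=f\upharpoonright(\delta+1)$ gives $\Delta(h,g)=\delta$ with $h(\delta)=0$ and $g(\delta)=1$, so $h\prec g$; thus $h\in(f,g)$. The symmetric case, in which $g(\xi_0)=1$ for some $\xi_0>\delta$, is handled by the mirror-image construction: let $h$ agree with $g$ below $\xi_0$, set $h(\xi_0)=0$, and conclude $f\prec h\prec g$ by the same bookkeeping. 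In either case the produced $h$ lies strictly between $f$ and $g$, contradicting emptiness of the interval.

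I expect the only delicate point to be the bookkeeping in the ``if'' direction when $\alpha=\delta$: there one must observe that $h$ and $g$ coincide through index $\delta$ and then invoke the tail hypothesis at $\Delta(h,g)$ to flip the comparison. Everything else is routine manipulation of the first-difference function, with no limit-ordinal subtleties, since $\prec$ is decided entirely by the least coordinate of disagreement.
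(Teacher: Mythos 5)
Your proof is correct, and it is worth noting that the paper itself gives no internal argument for this lemma: its ``proof'' is just a citation to 13.16 of Gillman--Jerison's \emph{Rings of continuous functions}. Your self-contained case analysis on the first-difference index is exactly the standard argument behind that reference, and every step checks out: in the ``if'' direction the three cases $\alpha<\delta$, $\alpha=\delta$, $\alpha>\delta$ are each correctly dispatched (including the genuinely delicate $\alpha=\delta$ case, where you rightly pass to $\Delta(h,g)>\delta$ and use the tail hypothesis on $g$ to force $g\prec h$), and in the contrapositive both failure modes of the tail condition ($f(\xi_0)=0$ or $g(\xi_0)=1$ for some $\xi_0>\delta$) are handled by explicit witnesses. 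One cosmetic remark: in both constructions of $h$ the values above $\xi_0$ are immaterial, since $\Delta(f,h)$ and $\Delta(h,g)$ are already pinned down at or below $\xi_0$; your choice of a constant tail is harmless but unnecessary, and in the mirror-image case you in fact left that tail unspecified, which is fine for the same reason. So your proposal is a correct and complete replacement for the outsourced proof, buying self-containedness at the cost of a page of routine bookkeeping.
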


\begin{proof}
See 13.16 in \cite{gil}.
\end{proof}

\begin{lemma}
\label{limit}

Let $f\in\mbox{}^{\omega_1}2$.

\begin{itemize}

\item[$(a)$]

If $f=\sup A$, where $A\subseteq\mbox{}^{\omega_1}2$ is countable
and has no greatest element, 
then the set $\{\xi\in\omega_1:f(\xi)=1\}$ is
countable.

\item[$(b)$]

If $f=\inf A$, where $A\subseteq\mbox{}^{\omega_1}2$ is countable
and has no least element, 
then the set $\{\xi\in\omega_1:f(\xi)=0\}$ is
countable.

\end{itemize}

\end{lemma}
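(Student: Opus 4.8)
The plan is to prove part $(a)$ directly and then note that $(b)$ follows by an order-reversing symmetry. So suppose $f = \sup A$ where $A \subseteq {}^{\omega_1}2$ is countable with no greatest element. The key observation is that since $A$ has no greatest element, for every $a \in A$ there is some $a' \in A$ with $a \prec a'$, and since $f$ is the least upper bound, $f$ cannot be attained by any element of $A$ (indeed $a \prec f$ for all $a \in A$, and $f \notin A$). I would argue by contradiction: suppose the set $Z = \{\xi \in \omega_1 : f(\xi) = 1\}$ is uncountable. I want to produce an upper bound for $A$ strictly below $f$, contradicting minimality of $f = \sup A$.

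First I would set up the relevant ordinals. For each $a \in A$ write $\delta_a = \Delta(a, f)$, so $a(\delta_a) = 0$ and $f(\delta_a) = 1$ (since $a \prec f$). Because $A$ is countable, the set $\{\delta_a : a \in A\}$ is a countable subset of $\omega_1$, hence bounded; let $\gamma = \sup_{a \in A} \delta_a < \omega_1$, and note $\gamma$ is a countable ordinal. Now, since $Z$ is uncountable, I can pick some $\eta \in Z$ with $\eta > \gamma$, i.e. $f(\eta) = 1$ while $\eta$ lies strictly above every $\delta_a$.

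The heart of the argument is then to build a witness $g \prec f$ that is still an upper bound for $A$. Define $g$ to agree with $f$ on all coordinates $\xi \ne \eta$ and set $g(\eta) = 0$; then $\Delta(g, f) = \eta$ with $g(\eta) = 0 < 1 = f(\eta)$, so $g \prec f$. It remains to check $a \preceq g$ for every $a \in A$. Fix $a \in A$. Since $\delta_a \le \gamma < \eta$ and $g$ agrees with $f$ below $\eta$, we have $g \upharpoonright \delta_a = f \upharpoonright \delta_a = a \upharpoonright \delta_a$, while $g(\delta_a) = f(\delta_a) = 1 > 0 = a(\delta_a)$; hence $\Delta(a,g) = \delta_a$ with the appropriate values, giving $a \prec g$. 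Thus $g$ is an upper bound for $A$ strictly below $f$, contradicting $f = \sup A$. Therefore $Z$ is countable, proving $(a)$.

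I expect the main obstacle to be purely bookkeeping: correctly verifying that flipping the single coordinate $\eta$ above $\gamma$ neither disturbs the comparison with any $a \in A$ (handled because $\eta$ exceeds all the $\delta_a$) nor fails to drop $g$ strictly below $f$ (handled because $\eta$ is the first and only place $g$ and $f$ differ). One subtlety worth double-checking is that $f \notin A$ and $a \prec f$ strictly for all $a$, which is where the hypothesis that $A$ has no greatest element is essential — without it $f$ could be a maximum of $A$ and the claim would fail. For part $(b)$, I would apply $(a)$ to the order-isomorphic copy of $X$ obtained by swapping the roles of $0$ and $1$ in each coordinate, under which suprema and infima interchange and the coordinate value $0$ corresponds to $1$; alternatively one simply repeats the argument with the inequalities reversed, flipping a coordinate $\eta$ from $0$ to $1$ to produce a lower bound strictly above $\inf A$.
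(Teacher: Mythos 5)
Your proof is correct and takes essentially the same route as the paper: both arguments bound $\sup\{\Delta(a,f):a\in A\}$ by a countable ordinal (using $f\notin A$, which follows from $A$ having no greatest element) and then flip a single coordinate of $f$ above that bound from $1$ to $0$ to obtain an upper bound of $A$ strictly $\prec$-below $f$, contradicting $f=\sup A$. The only cosmetic difference is that you phrase this as a contradiction with uncountability of $\{\xi\in\omega_1:f(\xi)=1\}$, while the paper concludes directly that $f(\xi)=0$ for every $\xi$ above the bound; in both cases $(b)$ is dispatched by the $0$--$1$ swapping symmetry.
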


\begin{proof}

We prove $(a)$; $(b)$ is analogous.

Let $A\subseteq\mbox{}^{\omega_1}2$ be countable with no greatest
element and such that $f=\sup A$. Since $A$ has no greatest element,
we have that $f\notin A$;
let then
$\delta=\sup\{\Delta(x,f):x\in A\}+1\in\omega_1$. We claim that
$f(\xi)=0$ for all $\xi\in\omega_1\setminus\delta$.
Indeed, if $\alpha\in\omega_1\setminus\delta$ were such that
$f(\alpha)=1$, we would have that the function
$g\in\mbox{}^{\omega_1}2$ defined by
$$
g(\xi)=\left\{
\begin{array}{lll}0&&\textrm{
    if }\xi=\alpha\\
f(\xi)&&\textrm{ otherwise}
\end{array}\right.
$$
would be an upper bound for $A$ satisfying
$g\prec f$, thus contradicting the assumption that $f=\sup A$.
\end{proof}

\begin{lemma}
\label{noGd}

No point of $X$ is a $G_\delta$.

\end{lemma}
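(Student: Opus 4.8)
The plan is to show that no point of $X$ has countable character; since $X$ is a compact Hausdorff space (it is a compact linearly ordered space by Lemma \ref{compact}), we have $\psi(f,X)=\chi(f,X)$ for every $f\in X$, so this is exactly the assertion that no singleton is a $G_\delta$. I would argue by contradiction: assume that some $f\in X$ has a countable local base and derive a contradiction from Lemmas \ref{sucpred} and \ref{limit}. It is worth recording at the outset that the $\prec$-minimum of $X$ is the constant function $0$ and the $\prec$-maximum is the constant function $1$.

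The core is a one-sided analysis. First I would show that if $f$ is not the $\prec$-maximum of $X$ and $f$ has a countable local base, then $\{\xi\in\omega_1:f(\xi)=0\}$ is countable. There are two cases. If $f$ has an immediate successor $g$, then Lemma \ref{sucpred} gives $f(\xi)=1$ for every $\xi>\Delta(f,g)$, so $\{\xi:f(\xi)=0\}\subseteq\Delta(f,g)+1$ is countable. If $f$ has no immediate successor, then from a countable local base I can extract a strictly decreasing sequence $(b_n)_{n\in\omega}$ of points with $f\prec b_n$ and $\inf\{b_n:n\in\omega\}=f$. Since this countable set has no least element, Lemma \ref{limit}$(b)$ yields that $\{\xi:f(\xi)=0\}$ is countable. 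A symmetric argument, invoking Lemma \ref{sucpred} when $f$ has an immediate predecessor and Lemma \ref{limit}$(a)$ otherwise, shows that if $f$ is not the $\prec$-minimum and has a countable local base, then $\{\xi:f(\xi)=1\}$ is countable.

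To finish I would combine the two sides. If $f$ is neither the maximum nor the minimum of $X$, then both $\{\xi:f(\xi)=0\}$ and $\{\xi:f(\xi)=1\}$ are countable, contradicting the fact that their union is all of $\omega_1$. If $f$ is the minimum, it is constantly $0$, so the right-hand analysis forces the uncountable set $\{\xi:f(\xi)=0\}=\omega_1$ to be countable, again a contradiction; the case of the maximum is handled symmetrically via the left-hand analysis. Hence no point of $X$ can have a countable local base.

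The step I expect to require the most care is the extraction, from a countable local base at a point $f$ with no immediate successor, of a strictly decreasing sequence $(b_n)$ above $f$ with infimum $f$ and no least element, so that Lemma \ref{limit}$(b)$ genuinely applies. I would justify $\inf\{b_n\}=f$ by observing that any candidate infimum $c\succ f$ would leave the open interval $(f,c)$ nonempty (as $f$ has no immediate successor) and would therefore yield a neighbourhood of $f$ containing no member of the base, contradicting that it is a local base. The endpoint bookkeeping---identifying the minimum and maximum with the constant functions $0$ and $1$---must be stated explicitly so that the one-sided arguments suffice in those degenerate cases.
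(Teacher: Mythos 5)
Your proof is correct, and it rests on the same two pillars as the paper's: Lemma \ref{sucpred} when $f$ has an immediate neighbour on the relevant side, Lemma \ref{limit} when $f$ is a one-sided limit, and a final clash between the countability of $\{\xi\in\omega_1:f(\xi)=0\}$ and of $\{\xi\in\omega_1:f(\xi)=1\}$. The packaging, however, is genuinely different. The paper works straight from the $G_\delta$ hypothesis: for non-constant $f$ it takes sequences $(x_n)_{n\in\omega}$, $(y_n)_{n\in\omega}$ with $\{g\in X:x_n\prec g\prec y_n\mbox{ for all }n\in\omega\}=\{f\}$, sets $x=\sup\{x_n:n\in\omega\}$ and $y=\inf\{y_n:n\in\omega\}$ via Lemma \ref{supinf}, observes that $x\prec f$ forces $(x,f)=\emptyset$ and $f\prec y$ forces $(f,y)=\emptyset$, and then excludes the four configurations $x\prec f\prec y$, $x=f=y$, $x\prec f=y$, $x=f\prec y$, dispatching the constant functions with ``essentially the same argument''. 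You instead convert ``$G_\delta$ point'' into ``countable local base'' via compactness (Lemma \ref{compact}) and the equality $\psi(x,X)=\chi(x,X)$ for compact $T_2$ spaces recorded in the paper's preliminaries, and then prove a modular one-sided claim; this buys you an honest, uniform treatment of the endpoints $f\equiv 0$ and $f\equiv 1$, which the paper only gestures at. The step you flag is indeed sound: recursively choose $b_{n+1}\in V_{n+1}$ with $f\prec b_{n+1}\prec b_n$, which is possible because $V_{n+1}\cap\{g\in X:g\prec b_n\}$ is a neighbourhood of $f$ and $f$ has no immediate successor; and if $c=\inf\{b_n:n\in\omega\}$ satisfied $c\succ f$, then already $\{g\in X:g\prec c\}$ would be an open neighbourhood of $f$ including no $V_n$ (since $b_n\in V_n$ and $b_n\succeq c$), so you do not in fact need the nonemptiness of $(f,c)$ at all. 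What the paper's version buys in exchange is independence from the fact $\psi=\chi$: its observations $(\dagger)$ and $(\ddagger)$ extract the same dichotomy directly from the $G_\delta$ witnesses, without passing through the character.
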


\begin{proof}

Suppose that $f\in X$ is a $G_\delta$ point.
Let us assume, for a moment, that $f$ is not constant.
Then there are sequences $(x_n)_{n\in\omega}$ and $(y_n)_{n\in\omega}$
in $X$ such that $\{g\in X:x_n\prec g\prec y_n$ for all
$n\in\omega\}=\{f\}$. Let $x=\sup\{x_n:n\in\omega\}$ and
$y=\inf\{y_n:n\in\omega\}$ --- note that $x$ and $y$ are well-defined
by virtue of Lemma \ref{supinf}. Since $x_n\prec f\prec y_n$ for all
$n\in\omega$, we must have $x\preceq f\preceq y$.

Note that $(\dagger)$ $x\prec f$ implies $(x,f)=\emptyset$ and that
$(\ddagger)$ $f\prec y$ implies $(f,y)=\emptyset$. Hence, in
light of Lemma \ref{sucpred}, $x\prec f\prec y$ cannot hold. We cannot
have $x=f=y$ either, in view of Lemma \ref{limit}. We are thus left
with the cases $x\prec f=y$ and $x=f\prec y$, which are also seen to
be impossible by putting Lemmas \ref{sucpred} and \ref{limit}
together with $(\dagger)$ and $(\ddagger)$.

Finally, we note that essentially the same argument applies to dealing
with the cases $f\equiv 0$ and $f\equiv 1$.
\end{proof}

Lemmas \ref{compact} and \ref{noGd}, together with Corollary
\ref{compactT2}, yield:

\begin{corol}
\label{indestr}

$X$ is destructible.

\end{corol}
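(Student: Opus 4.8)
The plan is to observe that $X$ fulfils every hypothesis of Corollary \ref{compactT2} and then simply invoke that corollary. The substantive work has already been carried out in the preceding lemmas, so what remains at this point is essentially an assembly.

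First I would record that $X$ is Hausdorff. This requires no new argument: $X$ carries the order topology induced by the linear order $\prec$, and every linearly ordered topological space is $T_2$. Combined with Lemma \ref{compact}, this shows that $X$ is a compact Hausdorff space.

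Next I would appeal to Lemma \ref{noGd}, which states that no point of $X$ is a $G_\delta$. With compactness, the Hausdorff property, and the absence of $G_\delta$ points all in hand, the three hypotheses of Corollary \ref{compactT2} are satisfied. Corollary \ref{compactT2} then yields at once that $X$ is destructible, which completes the proof.

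Since all the real content lies in Lemmas \ref{compact} and \ref{noGd} and in the earlier Corollary \ref{compactT2} (whose proof runs through the tree-theoretic machinery of Proposition \ref{cptree}), there is no genuine obstacle remaining at this final step. The only point worth stating explicitly is that a linearly ordered topological space is automatically Hausdorff, which is precisely what certifies $X$ as a compact \emph{Hausdorff} space and thereby licenses the application of Corollary \ref{compactT2}.
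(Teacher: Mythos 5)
Your proof is correct and matches the paper's argument exactly: the paper derives Corollary \ref{indestr} precisely by combining Lemmas \ref{compact} and \ref{noGd} with Corollary \ref{compactT2}. Your explicit remark that $X$, as a linearly ordered topological space, is automatically Hausdorff is a point the paper leaves implicit, but it is correct and does not change the argument.
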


We shall now see that $X$ satisfies
$\mathsf{S}_1^{\omega_1}(\mathcal{O},\mathcal{O})$ under the Continuum
Hypothesis.

\begin{lemma}
\label{aux}

If $h\in X$ is an accumulation point of a subset $A\subseteq X$,
then $h=\sup\{f\in A:f\prec h\}$ or $h=\inf\{f\in A:h\prec f\}$.

\end{lemma}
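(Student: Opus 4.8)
The plan is to prove the contrapositive characterization directly: I want to show that if $h$ is an accumulation point of $A$, then $h$ is approached from below by points of $A$ below it, or from above by points of $A$ above it. Since $X$ is a linearly ordered topological space, its topology is generated by open intervals, so a basic neighbourhood of $h$ has the form $(p,q)$ with $p \prec h \prec q$ (with the obvious modifications when $h$ is the minimum or maximum of $X$). Being an accumulation point means every such neighbourhood meets $A \setminus \{h\}$.

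First I would set $A^- = \{f \in A : f \prec h\}$ and $A^+ = \{f \in A : h \prec f\}$, and argue by contradiction. Suppose neither $h = \sup A^-$ nor $h = \inf A^+$ holds. The key observation is that, because $h$ accumulates $A$, every neighbourhood $(p,q)$ of $h$ contains a point of $A$ distinct from $h$, and such a point lies in $A^-$ or in $A^+$. I would split according to which ``side'' supplies these points. If infinitely many (in the relevant cofinal sense) lie below $h$, then $A^-$ is nonempty and $h$ is an upper bound of $A^-$; if $h$ were not the least upper bound, there would be an upper bound $p \prec h$, and then the interval $(p, q)$ for any $q \succ h$ would be a neighbourhood of $h$ meeting $A \setminus \{h\}$ only in $A^+$ — so the points witnessing accumulation would have to come from above. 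Running the symmetric argument on $A^+$, failure of $h = \inf A^+$ produces a lower bound $q \succ h$ of $A^+$, and then the neighbourhood $(p, q)$ witnesses that the only accumulation can come from $A^-$, a contradiction once both sides are excluded.

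The cleanest way to organize this is: assume $h \neq \sup A^-$ and $h \neq \inf A^+$ and derive that $h$ is isolated in $A$, contradicting that it is an accumulation point. If $A^- = \emptyset$ or its supremum is some $p \prec h$, then $p$ (or any element strictly below $h$) bounds $A^-$ away from $h$ on the left; similarly, failure on the right gives a $q \succ h$ bounding $A^+$ away on the right. Then $(p,q)$ is a neighbourhood of $h$ containing no point of $A \setminus \{h\}$, so $h$ is not an accumulation point of $A$. One must handle the degenerate cases where $A^-$ or $A^+$ is empty (then $h$ is automatically not a supremum or infimum of the empty set, and a one-sided neighbourhood suffices) and where $h$ is the $\prec$-least or $\prec$-greatest element of $X$ (namely the constant functions $0$ and $1$), using the appropriate half-open basic neighbourhoods.

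The main obstacle will be the bookkeeping around suprema and infima in this specific order: I want to use that every nonempty subset of $X$ has both a least upper bound and a greatest lower bound (Lemma \ref{supinf}), so that $\sup A^-$ and $\inf A^+$ actually exist whenever $A^-, A^+$ are nonempty, which is what makes the dichotomy meaningful. The subtle point is distinguishing ``$\sup A^- = h$'' from ``$\sup A^-\prec h$'': the former does not require $h \in A^-$, and indeed $h$ may be a limit from below without being attained. I expect the care to lie in checking that when $\sup A^- = p \prec h$ strictly, the point $p$ (or rather any point of the open interval $(p,h)$, which is a neighbourhood of nothing if $(p,h)$ could be empty) genuinely separates $h$ from $A^-$; here I would lean on the order topology directly — if $p \prec h$ then either $(p,h) \neq \emptyset$, giving an intermediate point, or $p$ is the immediate predecessor of $h$, and in either case a basic open interval about $h$ avoiding $A^-$ is available.
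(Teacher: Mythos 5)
Your proposal is correct and is essentially the paper's own proof: the paper likewise sets $L=\{f\in A:f\prec h\}$ and $R=\{f\in A:h\prec f\}$, takes $x=\sup L$ and $y=\inf R$ (existence via Lemma \ref{supinf}), observes that $x\prec h\prec y$ would make $(x,y)$ a neighbourhood of $h$ with $(x,y)\cap A\subseteq\{h\}$, contradicting that $h$ is an accumulation point, and treats the cases $L=\emptyset$, $R=\emptyset$ separately, exactly as you do. Your closing worry about immediate predecessors is unnecessary, since $(p,q)$ is an open neighbourhood of $h$ whether or not $(p,h)$ is empty, as your own ``cleanest way'' paragraph already exploits.
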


\begin{proof}

Let $L=\{f\in A:f\prec h\}$ and $R=\{f\in A:h\prec f\}$. We first deal
with the case where $L$ and $R$ are both non-empty.

Let $x=\sup L$ and $y=\inf R$. Clearly, $x\preceq h\preceq y$. If
we had $x\prec h\prec y$, it would follow that $(x,y)\cap
A\subseteq\{h\}$, thus contradicting the assumption that $h$ is an
accumulation point of $A$. Therefore, $h=x$ or $h=y$.

We can proceed similarly in the cases $L=\emptyset$ and $R=\emptyset$.
\end{proof}

\begin{lemma}
\label{conc}

If a closed subset of $F\subseteq X$ is infinite, then some $h\in F$
is eventually constant.

\end{lemma}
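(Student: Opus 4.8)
The plan is to exhibit inside $F$ a \emph{countable} monotone $\omega$-sequence whose limit is forced to be eventually constant, rather than to work with an arbitrary accumulation point of $F$. Since $F$ is infinite it contains a countably infinite subset $D$; enumerating $D$ injectively and invoking the standard monotone-subsequence theorem for linear orders, I would extract from $D$ a strictly monotone sequence $(a_n)_{n\in\omega}$, which is therefore either strictly $\prec$-increasing or strictly $\prec$-decreasing.

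I would first treat the increasing case. Putting $A=\{a_n:n\in\omega\}$ and $h=\sup A$ (which exists by Lemma \ref{supinf}), the fact that $A$ is strictly increasing means it has no greatest element, so $h\notin A$ and $h$ is a left limit of $A$; consequently every basic neighbourhood of $h$ meets $A$, whence $h\in\overline A\subseteq F$ because $F$ is closed. Now $A$ is countable with no greatest element and $h=\sup A$, so Lemma \ref{limit}$(a)$ applies and yields that $\{\xi\in\omega_1:h(\xi)=1\}$ is countable. By regularity of $\omega_1$ this set is bounded by some $\delta\in\omega_1$, so $h(\xi)=0$ for all $\xi\ge\delta$; that is, $h\in F$ is eventually constant, as required. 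The decreasing case is symmetric: I would take $h=\inf A\in F$ and apply Lemma \ref{limit}$(b)$ to see that $\{\xi:h(\xi)=0\}$ is bounded, so $h$ is eventually $1$.

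The one point that needs care — and the reason for routing through a monotone \emph{sequence} rather than through Lemma \ref{aux} directly — is the hypothesis ``$A$ countable with no greatest element'' of Lemma \ref{limit}. If instead one simply picked an arbitrary accumulation point $h$ of $F$ and used Lemma \ref{aux} to write $h=\sup\{f\in F:f\prec h\}$, the set on the right could be uncountable with uncountable left-cofinality (indeed, Lemma \ref{limit}$(a)$ shows this happens precisely when $h$ has uncountably many $1$'s), and then Lemma \ref{limit} would be unavailable. Extracting a countable monotone sequence guarantees a countable generating set for the sup (or inf), which is exactly what makes Lemma \ref{limit} applicable and forces the limit to be eventually constant. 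The only ingredient beyond the cited lemmas is the monotone-subsequence theorem itself, which I would justify by the usual peak/no-peak dichotomy applied to the injective enumeration of $D$.
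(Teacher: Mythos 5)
Your proof is correct, but it takes a different route from the paper's. The paper also starts from an arbitrary $A\in[F]^{\aleph_0}$, but then uses compactness of $F$ to get an accumulation point $h\in F$ of $A$ and invokes Lemma \ref{aux} to conclude that $h=\sup\{f\in A:f\prec h\}$ or $h=\inf\{f\in A:h\prec f\}$, after which Lemma \ref{limit} applies exactly as in your argument. You replace the pair (compactness of $F$, Lemma \ref{aux}) with the Ramsey/peak monotone-subsequence extraction plus completeness of $X$ (Lemma \ref{supinf}), needing only that $F$ is closed; this is a perfectly sound and slightly more elementary substitution, at the cost of importing the monotone-subsequence theorem, which the paper never needs. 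One remark on your closing paragraph: the danger you guard against --- a generating set of uncountable cofinality --- does not actually threaten the Lemma \ref{aux} route, because one applies Lemma \ref{aux} with the \emph{countable} set $A$ in the role of its ``$A$'', not with all of $F$; then $\{f\in A:f\prec h\}$ is automatically countable, and it has no greatest element since all its elements lie strictly below its supremum $h$. So your detour through monotone sequences is a legitimate alternative rather than a necessary repair; the paper's version has the mild advantage of reusing Lemma \ref{aux}, which also serves in the proof of Lemma \ref{pichi}.
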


\begin{proof}

Pick an arbitrary $A\in[F]^{\aleph_0}$. As $F$ is compact, there is a
point $h\in F$ that is an accumulation point of $A$. We can apply
Lemma \ref{aux} and assume, without loss of generality, that
$h=\sup\{f\in A:f\prec h\}$. Since $h\notin\{f\in A:f\prec h\}$, it
follows from Lemma \ref{limit} that $\{\xi\in\omega_1:h(\xi)=1\}$ is
countable.
\end{proof}

\begin{prop}
\label{CH}

$\mathrm{CH}$ implies
$\mathsf{S}_1^{\omega_1}(\mathcal{O}_X,\mathcal{O}_X)$.

\end{prop}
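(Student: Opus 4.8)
The plan is to show that $X = {}^{\omega_1}2$ (with the lexicographic order topology) satisfies $\mathsf{S}_1^{\omega_1}(\mathcal{O}_X,\mathcal{O}_X)$ under CH. The key structural fact, which I expect to be the engine of the whole argument, is Lemma \ref{conc}: every infinite closed subset of $X$ contains an eventually constant point. This means that the ``non-eventually-constant'' points of $X$ form a set on which $X$ is, in a suitable sense, very close to being scattered — any infinite closed set must touch the ``nice'' eventually-constant points.

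First I would set up the game/selection combinatorics. Suppose we are given a sequence $(\mathcal{U}_\xi)_{\xi\in\omega_1}$ of open covers of $X$; we must select $U_\xi\in\mathcal{U}_\xi$ covering $X$. My plan is to split the task according to the dichotomy suggested by Lemma \ref{conc}. The eventually constant elements of $X$ are exactly those $f\in{}^{\omega_1}2$ that are constant off some countable ordinal, so there are $\sum_{\alpha<\omega_1}|{}^{\alpha}2|\cdot 2 = \aleph_1\cdot\mathfrak{c}$ of them; under CH this is exactly $\aleph_1$. Enumerate this set as $\{e_\xi:\xi\in\omega_1\}$. The idea is to use the selections at ``even'' stages (or a fixed cofinal–coinitial set of indices of order type $\omega_1$) to cover the eventually constant points one at a time, and to reserve the remaining stages to cover everything else.

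Next, I would handle the complement. Let $D = X\setminus\{e_\xi:\xi\in\omega_1\}$ be the set of non-eventually-constant points. The crucial claim is that once all eventually constant points have been covered by a family of chosen open sets, the remaining uncovered set is \emph{closed}, and being disjoint from all eventually constant points, Lemma \ref{conc} forces it to be \emph{finite}; a finite set is trivially covered using finitely many of the leftover innings. More carefully, I would argue inductively: after covering $\{e_\eta:\eta<\xi\}$, if the residual closed set is still infinite it meets $D$ only, contradicting Lemma \ref{conc}, so at ``most'' stages the residual set is already finite. I would therefore interleave the two jobs so that at stage $\xi$ I either pick from $\mathcal{U}_\xi$ a set covering $e_\xi$, or, if the uncovered remainder has become finite, switch to covering those finitely many leftover points using the remaining $\aleph_1$ innings.

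The main obstacle is the bookkeeping that guarantees \emph{simultaneously} (a) every eventually constant point gets covered, and (b) the finite residual gets absorbed — all within a single $\omega_1$-indexed selection where each inning must choose from its \emph{own} prescribed cover $\mathcal{U}_\xi$. The delicate point is that I cannot know in advance at which stage the residual becomes finite, and I must not ``waste'' an inning covering a point already covered while some $e_\eta$ remains uncovered. The clean way around this is to fix a partition of $\omega_1$ into two disjoint cofinal sets $\{\sigma_\xi:\xi\in\omega_1\}$ and $\{\tau_\xi:\xi\in\omega_1\}$, use the $\sigma$-innings to cover the $e_\xi$'s in order, and observe that after all $\sigma$-innings the uncovered set is closed and disjoint from every eventually constant point, hence finite by Lemma \ref{conc}; then the $\tau$-innings (of which there are $\aleph_1\geq$ finitely many) cover this finite remainder. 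Verifying that ``the uncovered set is closed'' — i.e. that the union of the chosen open sets is open with closed complement, which is automatic — and that CH is exactly what makes the eventually constant points enumerable in type $\omega_1$ are the two quantitative hinges I would check with care.
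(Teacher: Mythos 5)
Your proposal is correct and is essentially the paper's own proof: under $\mathrm{CH}$ enumerate the eventually constant points as $\{e_\xi:\xi\in\omega_1\}$, at designated innings select from $\mathcal{U}_\xi$ a set covering $e_\xi$, invoke Lemma \ref{conc} to conclude that the uncovered residual --- closed and disjoint from all eventually constant points --- is finite, and absorb it in the reserved innings (the paper simply reserves the innings indexed by $\omega$ and uses $\omega_1\setminus\omega$ for the $e_\xi$'s, rather than your two interleaved cofinal sets). Your worry about not knowing in advance when the residual becomes finite is moot, since in $\mathsf{S}_1^{\omega_1}(\mathcal{O}_X,\mathcal{O}_X)$ the covers are all given up front and the selection need not be adaptive --- the clean partition argument you settle on is exactly what is required.
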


\begin{proof}

Let
$C=\{f\in\mbox{}^{\omega_1}2:f$ is eventually constant$\}$. Using
$\mathrm{CH}$, write
$C=\{f_\alpha:\alpha\in\omega_1\setminus\omega\}$.

Now let $(\mathcal{U}_\alpha)_{\alpha\in\omega_1}$ be a sequence of
open covers of $X$. For each $\alpha\in\omega_1\setminus\omega$, pick
$U_\alpha\in\mathcal{U}_\alpha$ such that $f_\alpha\in U_\alpha$. By
Lemma \ref{conc}, the set
$F=X\setminus\bigcup\{U_\alpha:\alpha\in\omega_1\setminus\omega\}$
is finite. Thus we can cover all the points in $F$ by choosing one
open set $U_n$ in each $\mathcal{U}_n$ with $n\in\omega$, and so we
get $(U_\alpha)_{\alpha\in\omega_1}$ such that
$U_\alpha\in\mathcal{U}_\alpha$ for all $\alpha\in\omega_1$ and
$X=\bigcup\{U_\alpha:\alpha\in\omega_1\}$.
\end{proof}

Note that, in order to conclude $\mathsf{S}_1^{\omega_1}(\mathcal
O_X,\mathcal O_X)$ in Proposition \ref{CH}, it would suffice to find a
subspace $Y\subseteq X$ satisfying $\mathsf S_1^{\omega_1}(\mathcal
O_Y,\mathcal O_Y)$ and such that $\mathsf S_1^{\omega_1}(\mathcal
O_{X\setminus U},\mathcal O_{X\setminus U})$ for every open set $U$
with $Y\subseteq U\subseteq X$. The fact that we could get
considerably more than this in our example (which is due to Lemma
\ref{conc}) suggests that there might still be room for improvement.

\begin{prob}
\label{CH?}

Is there such a counterexample in $\mathrm{ZFC}$? What about under
weaker additional hypotheses, e.g. $\mathrm{MA}$?

\end{prob}

Finally, let us make an observation about this space of which we shall
make use in the next section:

\begin{lemma}
\label{pichi}

If $F\subseteq X$ is closed, then there is $h\in F$ such that
$\pi\chi(h,F)=\aleph_0$.

\end{lemma}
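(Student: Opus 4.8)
The plan is to exhibit a point $h\in F$ whose $\pi$-character is countable. Building on Lemma \ref{conc}, the natural strategy is to find an $h\in F$ that is \emph{eventually constant}, since such points should have a simple cofinal/coinitial structure in the lexicographic order, and then construct an explicit countable $\pi$-base from order intervals. If $F$ is finite, every point of $F$ is isolated, so $\pi\chi(h,F)=\aleph_0$ trivially for any $h\in F$; thus I would assume $F$ is infinite and apply Lemma \ref{conc} to obtain an eventually constant $h\in F$.

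First I would fix the countable ordinal $\delta$ beyond which $h$ is constant, say $h(\xi)=\epsilon$ for all $\xi\ge\delta$, where $\epsilon\in\{0,1\}$. The key observation is that in a linearly ordered topological space a $\pi$-base at $h$ can be assembled from open intervals, so it suffices to produce countably many intervals such that every basic neighbourhood of $h$ contains one of them. I would handle $h$ according to whether $h$ is a two-sided limit, a left endpoint, a right endpoint, or isolated in $F$; by Lemma \ref{aux} an accumulation point $h$ of a set satisfies $h=\sup\{f\prec h\}$ or $h=\inf\{h\prec f\}$, which tells me on which side the interesting accumulation occurs. On the side where $h$ is a limit from within $F$, I would use the eventual constancy of $h$ together with Lemma \ref{sucpred} and Lemma \ref{limit} to build a countable decreasing (or increasing) sequence of points of $X$ converging to $h$ and thereby a countable family of intervals $(a_n,b_n)$ shrinking to $h$; on the side where $h$ is isolated from $F$, I can take a fixed witnessing interval. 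The claim is then that the resulting countable family of open intervals, intersected with $F$, forms a $\pi$-base at $h$ in $F$.

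I expect the main obstacle to be the careful verification that the constructed countable family really is a $\pi$-base: I must check that an \emph{arbitrary} basic open interval $(u,v)$ around $h$ contains one of my chosen intervals. This is where the eventual constancy of $h$ is crucial, because it controls how neighbourhoods of $h$ look — using Lemma \ref{limit}, any point obtained as $\sup$ or $\inf$ of a countable side-set is itself eventually constant, which lets me tie the cofinal sequences I build to the explicit bit-pattern of $h$ and ensure the intervals genuinely refine every given neighbourhood. I would also need to treat the degenerate order-theoretic cases (immediate successor/predecessor, via Lemma \ref{sucpred}) separately, since there $h$ may be isolated on one side and the corresponding half of the $\pi$-base collapses to a single interval. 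Once all cases are dispatched, taking the union of the two halves gives a countable $\pi$-base, so $\pi\chi(h,F)\le\aleph_0$, and combined with the general lower bound this yields $\pi\chi(h,F)=\aleph_0$.
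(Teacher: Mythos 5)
There is a genuine gap, and it is in your very first move: you select $h$ merely as an eventually constant point of $F$ (via Lemma \ref{conc}) and then claim that eventual constancy gives a ``simple cofinal/coinitial structure'' yielding a countable $\pi$-base. That implication is false. Take $F=X$ and let $h$ be the constant-zero function, which is certainly eventually constant. For each $\alpha\in\omega_1$ let $u_\alpha\in X$ be $0$ on $\alpha$ and $1$ on $\omega_1\setminus\alpha$. Then the sets $[h,u_\alpha)$ are neighbourhoods of $h$, and for every cofinal $C\subseteq\omega_1$ one has $\bigcap_{\alpha\in C}[h,u_\alpha)=\{h\}$, which is not open (by Lemma \ref{sucpred}, $h$ has no immediate successor). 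Consequently each non-empty open $P$ satisfies $P\subseteq[h,u_\alpha)$ for only a bounded set of $\alpha$'s, and a countable family of such $P$'s serves only a bounded set of $\alpha$'s; hence $\pi\chi(h,X)=\aleph_1$. The same computation shows that the coinitiality of $\{g\in X:h\prec g\}$ is $\omega_1$, so no countable sequence converges to $h$ from the right: your plan to ``build a countable decreasing (or increasing) sequence of points of $X$ converging to $h$'' from eventual constancy alone cannot be carried out, and in fact you are using Lemma \ref{limit} backwards --- being the sup (inf) of a countable set with no greatest (least) element \emph{implies} eventual constancy, not conversely. Citing Lemma \ref{aux} does not rescue the argument either, since it applies to accumulation points of a given set $A$, and an arbitrary eventually constant point need not be an accumulation point of any countable subset of $F$ (again witness $h$ above). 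A further structural problem: your intervals $(a_n,b_n)$ ``shrinking to $h$'' all contain $h$, so your countable family would be a local base, making $h$ a $G_\delta$ point of the compact $T_2$ space $F$; for $F=X$ this contradicts Lemma \ref{noGd} outright. Any correct proof must exploit the fact that $\pi$-base members need \emph{not} contain the point.

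The paper's proof reverses your logic: rather than picking an eventually constant point and hoping it is a countable one-sided limit, it picks a point that is such a limit \emph{by construction}. If $F$ is infinite, choose $A\in[F]^{\aleph_0}$; by compactness $A$ has an accumulation point $h\in F$, and by Lemma \ref{aux} one may assume $h=\sup\{f\in A:f\prec h\}$ --- the sup of a \emph{countable} subset of $F$, not of $\{f\in F:f\prec h\}$, whose cofinality may well be $\omega_1$. Since $\{f\in A:f\prec h\}$ has no greatest element, for each $f\in A$ with $f\prec h$ there is $f'\in A$ with $f\prec f'\prec h$, so $(f,h)\cap F\neq\emptyset$; and every relative neighbourhood of $h$ includes a set of the form $(u,v)\cap F$ (or $(u,\rightarrow)\cap F$ if $h=\max X$) with $u\prec h$, whence some $f\in A$ satisfies $u\prec f\prec h$ and $(f,h)\cap F\subseteq(u,v)\cap F$. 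Thus $\{(f,h)\cap F:f\in A,\ f\prec h\}$ is a countable $\pi$-base at $h$ in $F$; it is one-sided and its members omit $h$, so the possibly $\omega_1$-coinitial other side of $h$ never enters the argument. Eventual constancy of $h$ is a byproduct of this choice (via Lemma \ref{limit}), not the engine of the proof.
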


\begin{proof}

If $F$ is finite, the result is immediate. If $F$ is infinite, pick
an arbitrary $A\in[F]^{\aleph_0}$; since $F$ is compact, $A$ has an
accumulation point $h\in F$. As in the proof of Lemma \ref{conc}, we
can apply Lemma \ref{aux} and assume that
$h=\sup\{f\in A:f\prec h\}$. It follows that
$\{(f,h)\cap\, F:f\in A,f\prec h\}$ is a countable $\pi$-base
for $h$ in $F$.
\end{proof}

\section{Some classes of destructible spaces}

Recall that a Hausdorff space is \emph{dyadic} if it is a continuous
image of the Cantor cube $2^\kappa$ for some infinite cardinal
$\kappa$.

\begin{thm}[\v Sanin \cite{sanin}]
\label{sanin}

If a dyadic space has weight $\lambda$, then it is a continuous image
of $2^\lambda$.

\end{thm}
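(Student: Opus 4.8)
The plan is to show that a continuous surjection from a Cantor cube onto a compact space of weight $\lambda$ uses only $\lambda$ of its coordinates, and then to discard the rest. So suppose $X$ is dyadic with $w(X)=\lambda$, and fix a continuous surjection $f:2^\kappa\to X$ witnessing dyadicity, for some infinite cardinal $\kappa$; we may assume $\lambda\le\kappa$, since $\lambda=w(X)\le w(2^\kappa)=\kappa$ (weight does not increase under continuous surjections of compacta). As $X$ is compact Hausdorff of weight $\lambda$, it embeds in $[0,1]^\lambda$; I therefore regard $X$ as a subspace of $[0,1]^\lambda$ and write $f_i=\pi_i\circ f:2^\kappa\to[0,1]$ for its coordinate functions, $i\in\lambda$.

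The heart of the argument --- and the step I expect to be the main obstacle --- is the classical fact that \emph{every continuous map $\varphi:2^\kappa\to[0,1]$ depends on only countably many coordinates}, that is, there is a countable $C\subseteq\kappa$ with $\varphi(x)=\varphi(y)$ whenever $x\upharpoonright C=y\upharpoonright C$. To prove this I would invoke compactness: for each $n\in\omega$, uniform continuity of $\varphi$ furnishes a finite clopen partition $\mathcal P_n$ of $2^\kappa$ on each piece of which $\varphi$ oscillates by less than $1/n$; each such piece, being basic clopen in $2^\kappa$, is determined by a finite set of coordinates, so I let $C_n$ collect these and put $C=\bigcup_{n\in\omega}C_n$, which is countable. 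If $x\upharpoonright C=y\upharpoonright C$, then for every $n$ the points $x$ and $y$ belong to the same member of $\mathcal P_n$ --- membership there depends only on the coordinates in $C_n\subseteq C$ --- so that $|\varphi(x)-\varphi(y)|<1/n$; as $n$ was arbitrary, $\varphi(x)=\varphi(y)$.

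Applying this to each $f_i$ yields countable sets $C_i\subseteq\kappa$, and I set $A=\bigcup_{i\in\lambda}C_i$, so that $|A|\le\lambda\cdot\aleph_0=\lambda$. Then $f$ factors as $f=g\circ\pi_A$, where $\pi_A:2^\kappa\to 2^A$ is the projection and $g:2^A\to X$ is well-defined, since $x\upharpoonright A=y\upharpoonright A$ forces $f_i(x)=f_i(y)$ for all $i\in\lambda$ and hence $f(x)=f(y)$. Because $\pi_A$ is an open continuous surjection, in particular a quotient map, and $g\circ\pi_A=f$ is continuous, $g$ is continuous; and $g$ is onto because $f$ is. Finally, since $|A|\le\lambda$, the cube $2^A\cong 2^{|A|}$ is a continuous image of $2^\lambda$ under a coordinate projection, so composing this projection with $g$ displays $X$ as a continuous image of $2^\lambda$, as desired.
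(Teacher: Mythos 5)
Your proof is correct; the only thing to compare it against is absent, because the paper offers no argument for this statement at all --- it is quoted as \v Sanin's theorem with a citation to \cite{sanin}. What you have reconstructed is the standard factorization proof. Your key lemma, that every continuous $\varphi:2^\kappa\rightarrow[0,1]$ depends on only countably many coordinates, is classical (in this compact setting it goes back to work of Mibu and Glicksberg; see also Problem 2.7.12 in Engelking's \emph{General Topology}), and your compactness/oscillation argument for it is sound except for one phrase: the pieces of a finite clopen partition of $2^\kappa$ need not be \emph{basic} clopen sets. The slip is harmless: by compactness every clopen subset of $2^\kappa$ is a finite union of basic clopen sets, hence is still determined by a finite set of coordinates, which is all your argument actually uses; alternatively, start from a finite cover by basic clopen sets of small oscillation and refine to the partition into cylinders over the union of their (finitely many) supports. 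The remaining steps --- the reduction $\lambda\le\kappa$ via the fact that continuous surjections of compact Hausdorff spaces do not raise weight, the embedding $X\hookrightarrow[0,1]^\lambda$, the bound $|A|\le\lambda\cdot\aleph_0=\lambda$ for $A=\bigcup_{i\in\lambda}C_i$, continuity of the induced map $g:2^A\rightarrow X$ via the open quotient $\pi_A$, and the final composition with a coordinate projection $2^\lambda\rightarrow 2^A$ --- are all correct as written.
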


In what follows we shall make use of the following fact, the
straightforward proof of which we omit:

\begin{lemma}
\label{preserv}

If a topological space $X$ satisfies
$\mathsf{S}_1^{\omega_1}(\mathcal{O},\mathcal{O})$ (respectively, One
has no winning strategy in $\mathsf{G}_1^{\omega_1}(\mathcal{O},\mathcal{O})$),
then every closed subspace of $X$ and every continuous image of $X$
also satisfy $\mathsf{S}_1^{\omega_1}(\mathcal{O},\mathcal{O})$
(respectively, One has no winning strategy in
$\mathsf{G}_1^{\omega_1}(\mathcal{O},\mathcal{O})$).

\end{lemma}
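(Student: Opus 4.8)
The plan is to prove the preservation statement in Lemma~\ref{preserv} by handling closed subspaces and continuous images separately, and in each case working directly from the selection-principle formulation in Definition~\ref{S1} (the parenthetical game version being entirely analogous). First I would fix a space $X$ satisfying $\mathsf{S}_1^{\omega_1}(\mathcal{O},\mathcal{O})$ and consider a closed subspace $F\subseteq X$. Given a sequence $(\mathcal{V}_\xi)_{\xi\in\omega_1}$ of open covers of $F$, the natural move is to lift each relatively open set $V\in\mathcal{V}_\xi$ to an open set $\widehat{V}\subseteq X$ with $\widehat{V}\cap F=V$, and then augment each collection with the single open set $X\setminus F$; since $F$ is closed, $X\setminus F$ is open, so $\mathcal{U}_\xi=\{\widehat{V}:V\in\mathcal{V}_\xi\}\cup\{X\setminus F\}$ is an open cover of $X$. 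Applying $\mathsf{S}_1^{\omega_1}$ in $X$ to $(\mathcal{U}_\xi)_{\xi\in\omega_1}$ yields a covering selection $(U_\xi)_{\xi\in\omega_1}$; intersecting each chosen set with $F$ and discarding any index where $U_\xi=X\setminus F$ (whose trace on $F$ is empty) produces a selection from the $\mathcal{V}_\xi$ that covers $F$.

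Next I would treat continuous images. Let $\varphi\colon X\to Y$ be a continuous surjection and let $(\mathcal{W}_\xi)_{\xi\in\omega_1}$ be open covers of $Y$. Pulling back, $\mathcal{U}_\xi=\{\varphi^{-1}[W]:W\in\mathcal{W}_\xi\}$ is an open cover of $X$ by continuity and surjectivity. Feeding $(\mathcal{U}_\xi)_{\xi\in\omega_1}$ into $\mathsf{S}_1^{\omega_1}$ for $X$ gives $\varphi^{-1}[W_\xi]$ with $W_\xi\in\mathcal{W}_\xi$ and $X=\bigcup_{\xi}\varphi^{-1}[W_\xi]$; applying $\varphi$ and using surjectivity gives $Y=\varphi\bigl[\bigcup_\xi\varphi^{-1}[W_\xi]\bigr]=\bigcup_\xi\varphi[\varphi^{-1}[W_\xi]]\subseteq\bigcup_\xi W_\xi$, so $(W_\xi)_{\xi\in\omega_1}$ is the desired covering selection.

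For the game version I would carry out the same two constructions at the level of strategies: a winning strategy for One in the subspace (or image) game would be transported back to a winning strategy for One in the game on $X$ via the same lifting/pullback of covers and the induced translation of Two's responses, contradicting the hypothesis that One has no winning strategy on $X$. The only point requiring care is that these translations must be done move-by-move so that the strategy on $X$ depends only on the finite history available at each inning; since the correspondence between covers and between Two's chosen sets is completely explicit and local, this presents no difficulty.

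The main obstacle, to the extent there is one, is purely bookkeeping: in the closed-subspace argument one must make sure the auxiliary set $X\setminus F$ causes no problem when it happens to be selected, and in the game version one must phrase the strategy transfer so that it respects the turn structure of the $\omega_1$-length game rather than operating on the whole play at once. Neither difficulty is substantive, which is consistent with the authors' description of the proof as straightforward; accordingly I would state the constructions and leave the routine verifications to the reader.
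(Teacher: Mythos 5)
Your proposal is correct and is essentially the intended argument: the paper declares the proof straightforward and omits it, and your two constructions (adding $X\setminus F$ to lifted covers of a closed subspace, pulling covers back along a continuous surjection, and running the same translations move-by-move to transfer a winning strategy for One, in the contrapositive) are exactly the routine verifications the authors leave to the reader. One small wording slip: at inning $\xi<\omega_1$ the available history has length $\xi$, which may be countably infinite rather than finite, but since your translation of Two's responses is defined inning-by-inning and independently of the rest of the play, the strategy transfer goes through unchanged at all countable stages, including limits.
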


\begin{lemma}
\label{2^k}

The following conditions are equivalent for an infinite cardinal
$\kappa$:

\begin{itemize}

\item[$(a)$]
$2^\kappa$ is indestructible;

\item[$(b)$]
$\mathsf{S}_1^{\omega_1}(\mathcal{O}_{2^\kappa},\mathcal{O}_{2^\kappa})$;

\item[$(c)$]
$\kappa=\omega$.

\end{itemize}

\end{lemma}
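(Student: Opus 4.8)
The plan is to prove the cycle of implications $(a)\Rightarrow(b)\Rightarrow(c)\Rightarrow(a)$.

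The implication $(a)\Rightarrow(b)$ is immediate: by Theorem \ref{st1}, indestructibility of $2^\kappa$ means that One has no winning strategy in $\mathsf{G}_1^{\omega_1}(\mathcal O_{2^\kappa},\mathcal O_{2^\kappa})$, and this already entails $\mathsf{S}_1^{\omega_1}(\mathcal O_{2^\kappa},\mathcal O_{2^\kappa})$, as observed immediately before Theorem \ref{pawl}.

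For $(b)\Rightarrow(c)$ I would argue by contraposition, exhibiting an explicit sequence of covers that defeats every selection whenever $\kappa\ge\omega_1$. Since then $\omega_1\subseteq\kappa$, for each $\xi\in\omega_1$ the two clopen sets $\pi_\xi^{-1}(\{0\})$ and $\pi_\xi^{-1}(\{1\})$ partition $2^\kappa$; set $\mathcal U_\xi=\{\pi_\xi^{-1}(\{0\}),\pi_\xi^{-1}(\{1\})\}$. Any selection $(U_\xi)_{\xi\in\omega_1}$ with $U_\xi\in\mathcal U_\xi$ is encoded by a function $\epsilon\in{}^{\omega_1}2$ via $U_\xi=\pi_\xi^{-1}(\{\epsilon(\xi)\})$, so that $\bigcup_{\xi\in\omega_1}U_\xi=\{x\in 2^\kappa:\exists\xi\in\omega_1\,(x(\xi)=\epsilon(\xi))\}$. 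Consequently any $x\in 2^\kappa$ with $x(\xi)=1-\epsilon(\xi)$ for every $\xi\in\omega_1$ --- and such points plainly exist --- is left uncovered. Thus no selection from $(\mathcal U_\xi)_{\xi\in\omega_1}$ covers $2^\kappa$, so $\mathsf{S}_1^{\omega_1}(\mathcal O_{2^\kappa},\mathcal O_{2^\kappa})$ fails and $(b)$ forces $\kappa=\omega$. (Equivalently one could first reduce to $2^{\omega_1}$, using that it is a continuous image of $2^\kappa$ together with Lemma \ref{preserv}, but the direct construction sidesteps this.)

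Finally, for $(c)\Rightarrow(a)$ I would use that $2^\omega$ is second countable, so it has a countable base $\mathcal B$ in the ground model $\mathbf M$. In any countably closed forcing extension $\mathbf M[G]$, the relation asserting that each member of the ground-model topology is a union of members of $\mathcal B$ is absolute, so $\mathcal B$ remains a base for $\widetilde{\mathcal T}$; and since countably closed forcing preserves $\omega_1$, the family $\mathcal B$ stays countable. Hence $\langle 2^\omega,\widetilde{\mathcal T}\rangle$ is second countable, a fortiori Lindel\"of, in $\mathbf M[G]$, so $2^\omega$ is indestructible. The only genuinely active step is the ``antipodal point'' construction in $(b)\Rightarrow(c)$, which is pleasantly short; the point requiring the most care is the absoluteness check in $(c)\Rightarrow(a)$ ensuring that a countable base survives the passage to the extension.
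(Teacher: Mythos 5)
Your proof is correct, and it diverges from the paper's in two places worth noting. For $(b)\Rightarrow(c)$ the paper first passes to a closed copy of $2^{\omega_1}$ inside $2^\kappa$ and invokes the preservation lemma (Lemma \ref{preserv}) before running the two-element-cover argument on $2^{\omega_1}$, where the uncovered set is exactly one point; your direct construction on $2^\kappa$ is the same computation without the reduction (the uncovered set is then a whole subcube rather than a singleton, but nonempty is all that is needed, and your ``antipodal'' witness plainly exists by extending $1-\epsilon$ arbitrarily on $\kappa\setminus\omega_1$). For $(c)\Rightarrow(a)$ the paper simply cites Theorem 6(a) of \cite{tall95} --- every hereditarily Lindel\"of space is indestructible --- whereas you reprove the special case from scratch: a countable base in the ground model remains a base for the generated topology in the extension, so $2^\omega$ stays second countable, hence Lindel\"of. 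Your version is more self-contained and makes the absoluteness mechanism explicit; the paper's citation buys the stronger and more general fact about hereditarily Lindel\"of spaces. One small correction to your $(c)\Rightarrow(a)$: preservation of $\omega_1$ is irrelevant to keeping $\mathcal B$ countable --- countability is upward absolute for \emph{any} forcing, since a ground-model surjection $\omega\to\mathcal B$ persists to the extension --- so that clause should be replaced by this trivial observation (and indeed nothing in this direction uses countable closure of the forcing; $2^\omega$ remains Lindel\"of in every extension).
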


\begin{proof}

We already know that $(a)\rightarrow(b)$ by Theorem \ref{st1}.

For $(b)\rightarrow(c)$, suppose to the contrary that $\kappa$ is
uncountable. Then $2^\kappa$ includes a copy of
$2^{\omega_1}$, which is closed since $2^{\omega_1}$ is compact. Thus,
by Lemma \ref{preserv},
$\mathsf{S}_1^{\omega_1}(\mathcal{O},\mathcal{O})$ fails for
$2^\kappa$ since it fails for $2^{\omega_1}$: in order to see this, we
need only consider for each $\alpha\in\omega_1$ the open cover
$\mathcal{U}_\alpha=\{\pi_\alpha^{-1}[\{0\}],\pi_\alpha^{-1}[\{1\}]\}$
of $2^{\omega_1}$, and no matter how $U_\alpha\in\mathcal{U}_\alpha$
are chosen for $\alpha\in\omega_1$ the set
$2^{\omega_1}\setminus\bigcup\{U_\alpha:\alpha\in\omega_1\}$ will
consist of exactly one point.

$(c)\rightarrow(a)$ is a direct consequence of Theorem 6(a) of
\cite{tall95}, which states that every hereditarily Lindel\"of space
is indestructible.
\end{proof}

We can now characterize the class of dyadic indestructible spaces:

\begin{corol}
\label{dyadic}

The following are equivalent for a dyadic space $X$:

\begin{itemize}

\item[$(a)$]
$X$ is indestructible;

\item[$(b)$]
$\mathsf{S}_1^{\omega_1}(\mathcal{O}_X,\mathcal{O}_X)$;

\item[$(c)$]
$X$ does not include a copy of $2^{\omega_1}$;

\item[$(d)$]
$w(X)=\aleph_0$.

\end{itemize}

\end{corol}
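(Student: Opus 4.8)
The plan is to prove Corollary \ref{dyadic} by establishing a cycle of implications among the four conditions, leaning heavily on Lemma \ref{2^k} and the preservation results already available. The implication $(a)\rightarrow(b)$ is immediate from Theorem \ref{st1}, exactly as in Lemma \ref{2^k}. For the reverse direction and the equivalence with the structural conditions $(c)$ and $(d)$, I would route everything through the weight of $X$, since \v Sanin's Theorem (Theorem \ref{sanin}) gives us tight control over dyadic spaces via their weight.

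First I would prove $(b)\rightarrow(c)$ by contraposition: if $X$ includes a copy of $2^{\omega_1}$, then since $2^{\omega_1}$ is compact this copy is closed in $X$, so by Lemma \ref{preserv} the failure of $\mathsf S_1^{\omega_1}(\mathcal O,\mathcal O)$ for $2^{\omega_1}$ (established inside the proof of Lemma \ref{2^k}) propagates to $X$, contradicting $(b)$. Next, $(c)\rightarrow(d)$ is the step I expect to be the main obstacle, and the place where dyadicity is genuinely used. The idea is that a dyadic space of uncountable weight must contain a copy of $2^{\omega_1}$. By Theorem \ref{sanin}, if $w(X)=\lambda\ge\aleph_1$ then $X$ is a continuous image of $2^\lambda$; one then wants to extract from the uncountable weight a subspace homeomorphic to $2^{\omega_1}$. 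This is a known feature of dyadic spaces (essentially Gerlits' theorem, or the classical fact that a dyadic space of weight $\ge\kappa$ maps continuously onto $2^\kappa$ and, for $\kappa=\omega_1$, actually embeds a copy of the cube); I would cite or invoke this structural result to conclude that failure of $(d)$ forces a copy of $2^{\omega_1}$, hence the contrapositive gives $(c)\rightarrow(d)$.

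Finally, $(d)\rightarrow(a)$ follows by combining \v Sanin's Theorem with Lemma \ref{2^k}: if $w(X)=\aleph_0$ then $X$ is a continuous image of $2^\omega$, which is indestructible by Lemma \ref{2^k}$(c)\rightarrow(a)$, and indestructibility passes to continuous images by Lemma \ref{preserv} (via Theorem \ref{st1}, since ``One has no winning strategy'' is the game-theoretic characterization of indestructibility and is preserved under continuous images). Chaining $(a)\rightarrow(b)\rightarrow(c)\rightarrow(d)\rightarrow(a)$ closes the loop and yields the full equivalence. The delicate point throughout is ensuring that the embedded or imaged cube is \emph{closed} so that Lemma \ref{preserv} applies to closed subspaces; compactness of $2^{\omega_1}$ handles this cleanly, which is why I would phrase $(b)\rightarrow(c)$ and the structural step in terms of a closed copy of the cube rather than an arbitrary subspace.
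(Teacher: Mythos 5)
Your proposal is correct and follows essentially the same route as the paper: the cycle $(a)\rightarrow(b)\rightarrow(c)\rightarrow(d)\rightarrow(a)$ with Theorem \ref{st1} for $(a)\rightarrow(b)$, the $2^{\omega_1}$ argument from Lemma \ref{2^k} plus Lemma \ref{preserv} for $(b)\rightarrow(c)$, and \v Sanin's Theorem with Lemmas \ref{preserv} and \ref{2^k} for $(d)\rightarrow(a)$. For $(c)\rightarrow(d)$ the paper cites Hagler's Theorem 1 of \cite{hagler}, which is exactly the structural fact (a dyadic space of uncountable weight includes a copy of $2^{\omega_1}$) you invoke under Gerlits' name, so your appeal to that known theorem matches the paper's proof.
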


\begin{proof}

Again, $(a)\rightarrow(b)$ is clear in view of Theorem \ref{st1}, and
$(b)\rightarrow(c)$ was already observed in the proof of Lemma
\ref{2^k}.
The implication $(c)\rightarrow(d)$ is a particular case of
J. Hagler's Theorem 1 of \cite{hagler}.
Finally, Lemmas \ref{preserv} and \ref{2^k} together with Theorem
\ref{sanin} yield $(d)\rightarrow(a)$. 
\end{proof}

This alternative proof of $(a)\rightarrow(d)$ in Corollary
\ref{dyadic} may also be of interest:

\begin{proof}[Proof of $(a)\rightarrow(d)$]

It was proven by B. Efimov in \cite{efimov63a}
that, if a dyadic space $X$ has uncountable weight, then the set of
$G_\delta$ points of $X$ is not dense in $X$. Let then $\Omega$ be a
non-empty open subset of $X$ with $\psi(p,X)>\aleph_0$ for every
$p\in\Omega$; we can then use regularity of $X$ to find a non-empty
$G_\delta$ subset $F$ of $\Omega$ such that $F$ is closed in
$X$. Since $F$ is a $G_\delta$ in $X$ as well, it follows that
$\psi(p,F)=\psi(p,X)>\aleph_0$ for all $p\in F$. By Corollary
\ref{compactT2}, $F$ is destructible; therefore, $X$ is destructible
by Lemma \ref{preserv} and Theorem \ref{st1}.
\end{proof}

\begin{corol}
\label{dyadicsize}

Every dyadic space of cardinality greater than $\mathfrak{c}$ is
destructible.

\end{corol}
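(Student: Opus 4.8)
The plan is to derive this immediately from the characterization in Corollary \ref{dyadic}, via the contrapositive. By the equivalence $(a)\leftrightarrow(d)$ established there, a dyadic space $X$ is indestructible if and only if $w(X)=\aleph_0$. So it suffices to show that every \emph{indestructible} dyadic space has cardinality $\le\mathfrak c$; equivalently, that countable weight forces $|X|\le\mathfrak c$. Given a dyadic space $X$ with $|X|>\mathfrak c$, it will then follow that $w(X)>\aleph_0$, whence $X$ is not indestructible, i.e. destructible.

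The one remaining step is the standard cardinality-versus-weight bound. First I would recall that dyadic spaces are by definition Hausdorff (in particular $T_0$). Then, fixing a base $\mathcal B$ for $X$ with $|\mathcal B|=w(X)$, I would consider the map $x\mapsto\{B\in\mathcal B:x\in B\}$ from $X$ into $\mathcal P(\mathcal B)$. This map is injective: if $x\ne y$, then by $T_0$-ness (indeed Hausdorffness) there is an open $U$ separating them, say $x\in U\not\ni y$, and some $B\in\mathcal B$ with $x\in B\subseteq U$, so $B$ lies in the image of $x$ but not of $y$. Hence $|X|\le 2^{w(X)}$, and in particular $w(X)=\aleph_0$ yields $|X|\le 2^{\aleph_0}=\mathfrak c$. (Alternatively, since $X$ is compact Hausdorff, countable weight makes $X$ second countable and therefore compact metrizable by Urysohn's metrization theorem, again giving $|X|\le\mathfrak c$; either route works.)

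There is no genuine obstacle here: all of the substantive content has already been absorbed into Corollary \ref{dyadic}, and this statement is merely the cardinal-arithmetic reading of condition $(d)$ failing. The only point requiring a word of care is confirming that dyadic spaces are Hausdorff so that the injection into $\mathcal P(\mathcal B)$ is available (equivalently, that the metrization argument applies), but this is immediate from the definition of dyadic as a continuous image of the Hausdorff compactum $2^\kappa$ together with the convention that such images are taken to be Hausdorff.
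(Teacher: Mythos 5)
Your proof is correct, and it reaches the conclusion by the same reduction as the paper --- both arguments pass through the equivalence $(a)\leftrightarrow(d)$ of Corollary \ref{dyadic}, so that the whole matter comes down to showing that a dyadic space of countable weight has cardinality at most $\mathfrak{c}$. Where you diverge is in that last step. The paper invokes \v Sanin's theorem (Theorem \ref{sanin}): a dyadic space of weight $\aleph_0$ is a continuous image of $2^\omega$, hence of size $\le 2^{\aleph_0}=\mathfrak{c}$. You instead use the elementary cardinal-function bound $|X|\le 2^{w(X)}$, obtained from the injection $x\mapsto\{B\in\mathcal{B}:x\in B\}$ for a $T_0$ space, which is available here since dyadicity is defined in the paper only for Hausdorff spaces (so your worry about that convention is already settled by Definition-level hypotheses). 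Your route is more elementary and more general --- it needs nothing about dyadic spaces beyond Hausdorffness, whereas \v Sanin's theorem is a nontrivial structural result --- but the paper's choice is natural in context, since Theorem \ref{sanin} is already quoted and is in any case needed for the implication $(d)\rightarrow(a)$ of Corollary \ref{dyadic} on which both proofs rest; so no depth is actually saved. Your parenthetical alternative via Urysohn metrization is also fine. No gaps.
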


\begin{proof}

By Lemma \ref{sanin} and Corollary \ref{dyadic}, we have that an
indestructible dyadic space $X$ must be a continuous image of the
Cantor space $2^\omega$, and thus $|X|\le 2^{\aleph_0}=\mathfrak{c}$.
\end{proof}

Corollary \ref{dyadic} tells us that the properties of
including a copy of $2^{\omega_1}$ and being destructible are
equivalent for dyadic spaces. We shall now try to draw some lines
between them for compact Hausdorff spaces in general.

Recall the following theorem of B. E. \v Sapirovski\u\i\mbox{}
\cite{sap} (see also 3.18 in \cite{juhasz10}):

\begin{thm}[\v Sapirovski\u\i\mbox{} \cite{sap}]
\label{sap}

The following conditions are equivalent for a compact $T_2$ space $X$
and an uncountable cardinal $\kappa$:

\begin{itemize}

\item[$(a)$]
$[0,1]^\kappa$ is a continuous image of $X$;

\item[$(b)$]
$2^\kappa$ is a continuous image of a closed subset of $X$;

\item[$(c)$]
there is a closed non-empty $F\subseteq X$ such that
$\pi\chi(x,F)\ge\kappa$ for all $x\in F$;

\item[$(d)$]
there is a $\kappa$-dyadic system in $X$, i.e.
an indexed family $\langle F_\alpha^i:\alpha\in\kappa, i\in 2\rangle$
of closed subsets of $X$ such that $F_\alpha^0\cap
F_\alpha^1=\emptyset$ for all $\alpha\in\kappa$ and
$\bigcap\{F_\xi^{p(\xi)}:\xi\in\mathrm{dom}(p)\}\neq\emptyset$ for all
$p\in Fn(\kappa,2)$.

\end{itemize}

\end{thm}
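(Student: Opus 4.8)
The plan is to prove \v Sapirovski\u\i's theorem by establishing the cycle of implications $(a)\rightarrow(b)\rightarrow(c)\rightarrow(d)\rightarrow(a)$, since this ordering lets each step feed naturally into the next. Throughout I will use compactness and Hausdorffness heavily, together with the fact that in a compact $T_2$ space disjoint closed sets can be separated by disjoint open sets (normality), which is what makes the $\pi$-character and dyadic-system combinatorics tractable.

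For $(a)\rightarrow(b)$, I would start from a continuous surjection $\varphi\colon X\rightarrow[0,1]^\kappa$. The idea is to compose with the standard continuous surjection $[0,1]^\kappa\rightarrow 2^\kappa$ that sends each coordinate onto $\{0,1\}$ (e.g.\ via a continuous map $[0,1]\rightarrow 2$ on each factor is impossible since $2$ is discrete; instead one maps $[0,1]$ onto a copy of the Cantor set $2^\omega$ coordinatewise and reorganizes), obtaining $2^\kappa$ as a continuous image of $X$, and then to cut down to a closed subset: take a closed $F\subseteq X$ minimal with respect to still mapping onto $2^\kappa$ (such $F$ exists by a Zorn/compactness argument because a decreasing chain of closed sets surjecting onto a compact space still surjects in the limit). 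For $(b)\rightarrow(c)$, assuming $2^\kappa$ is a continuous image of a closed $F\subseteq X$, I would shrink $F$ further to arrange that every point has $\pi$-character $\ge\kappa$; the key mechanism is that a surjection onto $2^\kappa$ forces large local complexity, and passing to an irreducible restriction (a closed subset mapping onto $2^\kappa$ with no proper closed subset doing so) transfers the $\pi$-weight $\kappa$ of $2^\kappa$ back to $\pi$-character $\ge\kappa$ at every point of the domain, using that irreducible maps reflect $\pi$-bases.

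For $(c)\rightarrow(d)$, this is the genuinely combinatorial heart and the step I expect to be the main obstacle. Given a closed $F$ with $\pi\chi(x,F)\ge\kappa$ everywhere, I must build a $\kappa$-dyadic system of closed sets with the finite intersection (centredness) property. The plan is a transfinite construction of length $\kappa$: at stage $\alpha$, having built $\langle F_\beta^i:\beta<\alpha,i\in 2\rangle$ so that all finite selections intersect, I use the lower bound on $\pi$-character to find, inside an arbitrary nonempty set of the form $\bigcap\{F_\xi^{p(\xi)}\}$, an open set whose closure and complement both meet that intersection in nonempty closed pieces, thereby defining $F_\alpha^0,F_\alpha^1$ while preserving centredness. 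The delicate point is ensuring a \emph{single} pair $F_\alpha^0,F_\alpha^1$ works simultaneously against \emph{all} finitely many previously built branches; this typically requires the large $\pi$-character to supply, by a counting/pressing-down argument, a new ``splitting'' open set not yet trivialized by the $<\kappa$-many earlier constraints, and it is here that regularity of $\kappa$ (or a careful bookkeeping over $Fn(\kappa,2)$) must be invoked.

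Finally, for $(d)\rightarrow(a)$, I would use the $\kappa$-dyadic system $\langle F_\alpha^i\rangle$ to manufacture a continuous map onto $[0,1]^\kappa$. The natural approach is first to separate each disjoint pair $F_\alpha^0,F_\alpha^1$ by a continuous function $g_\alpha\colon X\rightarrow[0,1]$ with $g_\alpha[F_\alpha^0]=\{0\}$ and $g_\alpha[F_\alpha^1]=\{1\}$ (Urysohn, via normality of the compact $T_2$ space $X$), and then to show that the diagonal map $g=(g_\alpha)_{\alpha\in\kappa}\colon X\rightarrow[0,1]^\kappa$ is surjective. Surjectivity follows from the centredness condition: for any target point $y\in[0,1]^\kappa$ and any basic neighbourhood (constraining finitely many coordinates), the corresponding finite intersection of appropriate $F_\xi^{p(\xi)}$ is nonempty, so the image $g[X]$, being compact hence closed, is dense and therefore all of $[0,1]^\kappa$. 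This closes the loop and, together with Lemma \ref{pichi} and Corollary \ref{compactT2} established earlier, is exactly the tool the paper needs to connect large $\pi$-character with destructibility.
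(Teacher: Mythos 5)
The paper itself gives no proof of Theorem \ref{sap}: it is quoted verbatim from \v Sapirovski\u\i\mbox{} \cite{sap} (see also 3.18 in \cite{juhasz10}), so your proposal can only be measured against the standard argument. Measured that way, two of your ``cheap'' arrows are wrong as written. For $(a)\rightarrow(b)$: there is \emph{no} continuous surjection $[0,1]^\kappa\rightarrow 2^\kappa$, since $[0,1]^\kappa$ is connected and $2^\kappa$ is not; in particular $[0,1]$ does not map onto the Cantor set $2^\omega$ (connectedness again), so the coordinatewise ``reorganization'' cannot even start. The correct step is far simpler: $2^\kappa$ is a closed subspace of $[0,1]^\kappa$, so $F=\varphi^{-1}[2^\kappa]$ is closed in $X$ and $\varphi\upharpoonright F$ is the required surjection; no minimal closed set is needed here. (Irreducibility belongs in $(b)\rightarrow(c)$, where your sketch is essentially correct: for an irreducible $f:F'\rightarrow 2^\kappa$, the sets $f^{\#}(P)=2^\kappa\setminus f[F'\setminus P]$ push a local $\pi$-base at $x$ forward to one at $f(x)$, and $\pi\chi(y,2^\kappa)=\kappa$ at every point since fewer than $\kappa$ nonempty open sets involve fewer than $\kappa$ coordinates.) For $(d)\rightarrow(a)$: the density claim is false. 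Centredness over $Fn(\kappa,2)$ constrains coordinates only at the values $0$ and $1$ and says nothing about a box confining finitely many coordinates to, say, $(1/3,2/3)$; concretely, take $X=2^\kappa$, $F_\alpha^i=\pi_\alpha^{-1}[\{i\}]$ and $g_\alpha=\pi_\alpha$, so that $g[X]=2^\kappa$, which is closed and nowhere dense in $[0,1]^\kappa$. What centredness plus compactness \emph{does} give is $\bigcap_{\alpha\in\kappa}F_\alpha^{f(\alpha)}\neq\emptyset$ for every $f\in\mbox{}^{\kappa}2$, whence $2^\kappa\subseteq g[X]$; restricting $g$ to $g^{-1}[2^\kappa]$ yields $(d)\rightarrow(b)$, and $(b)\rightarrow(a)$ then follows by composing with a continuous surjection of $2^\kappa\cong(2^\omega)^\kappa$ onto $[0,1]^\kappa$ and extending each coordinate function from the closed set to all of $X$ by Tietze (the extension is automatically onto since its restriction already is). So the cycle should be reorganized, e.g.\ $(a)\rightarrow(b)\rightarrow(c)\rightarrow(d)\rightarrow(b)$ together with $(b)\rightarrow(a)$.

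The more serious defect is that $(c)\rightarrow(d)$ --- which, as you rightly say, is the heart, and is the genuinely deep content of \v Sapirovski\u\i's theorem --- is not proved but only hoped for. You correctly isolate the crux: at stage $\alpha$ a single disjoint pair $F_\alpha^0,F_\alpha^1$ must meet every one of the fewer than $\kappa$ nonempty finite intersections already generated, and an appeal to ``a counting/pressing-down argument'' does not produce such a pair. Note also that you cannot ``invoke regularity of $\kappa$'': the theorem is asserted for every uncountable $\kappa$, singular ones included. Regularity is in fact not needed --- at stage $\alpha$ there are only $|\alpha|+\aleph_0<\kappa$ prior finite patterns, so the cardinality bookkeeping survives singular $\kappa$; what is missing is the extraction step itself, namely \v Sapirovski\u\i's lemma-level machinery showing that, because $\pi\chi(x,F)\ge\kappa$ at \emph{every} point of $F$, the fewer-than-$\kappa$ open sets used so far cannot form a $\pi$-base anywhere, together with a choice scheme that converts this into simultaneous centredness over all of $Fn(\kappa,2)$ rather than merely along single branches of the construction (compare the proof of 3.18 in \cite{juhasz10}). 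After the repairs above, your write-up establishes the routine implications and reduces the theorem to its hard one, which remains a gap in the proposal.
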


The case $\kappa=\omega_1$ of Theorem \ref{sap} is of interest for the
study of destructibility of compact spaces:

\begin{lemma}
\label{diagram-dyadic}

Let $X$ be a compact
space.

\begin{itemize}

\item[$(a)$]

If $X$ includes a copy of $2^{\omega_1}$, then there is an
$\omega_1$-dyadic system in $X$.

\item[$(b)$]

If there is an $\omega_1$-dyadic system in $X$, then
$\neg\mathsf{S}_1^{\omega_1}(\mathcal O_X,\mathcal O_X)$.

\end{itemize}

\end{lemma}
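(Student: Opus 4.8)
The plan is to prove the two implications separately: part $(a)$ by transplanting the obvious clopen dyadic structure of $2^{\omega_1}$ into $X$ along an embedding, and part $(b)$ by using the defining intersection property of a dyadic system together with compactness to defeat every selection.

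For part $(a)$, I would fix an embedding $\varphi$ of $2^{\omega_1}$ onto a subspace $K\subseteq X$ and work with the clopen fibres $\pi_\alpha^{-1}[\{i\}]=\{f\in\mbox{}^{\omega_1}2:f(\alpha)=i\}$ of $2^{\omega_1}$. Setting $F_\alpha^i=\varphi[\pi_\alpha^{-1}[\{i\}]]$ produces the candidate system, and the three verifications are routine. First, $F_\alpha^0\cap F_\alpha^1=\emptyset$ because the two fibres are disjoint and $\varphi$ is injective. Second, each $F_\alpha^i$ is closed in $X$: since $K$ is a compact subspace of the Hausdorff space $X$ it is closed in $X$, so closed subsets of $K$ remain closed in $X$. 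Third, for any $p\in Fn(\omega_1,2)$ the set $\bigcap\{\pi_\xi^{-1}[\{p(\xi)\}]:\xi\in\mathrm{dom}(p)\}$ is the nonempty basic clopen set determined by $p$, whence $\bigcap\{F_\xi^{p(\xi)}:\xi\in\mathrm{dom}(p)\}$ is its nonempty $\varphi$-image. Thus $\langle F_\alpha^i\rangle$ is an $\omega_1$-dyadic system. Alternatively, one may simply invoke the implication $(b)\rightarrow(d)$ of Theorem \ref{sap} with $\kappa=\omega_1$, since a copy of $2^{\omega_1}$ in $X$ is in particular a closed subset of $X$ mapping continuously onto $2^{\omega_1}$.

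For part $(b)$, suppose $\langle F_\alpha^i:\alpha\in\omega_1,\,i\in 2\rangle$ is an $\omega_1$-dyadic system. For each $\alpha$ I would set $\mathcal U_\alpha=\{X\setminus F_\alpha^0,\,X\setminus F_\alpha^1\}$; this is an open cover of $X$ precisely because $F_\alpha^0\cap F_\alpha^1=\emptyset$. Given an arbitrary selection $U_\alpha\in\mathcal U_\alpha$, write $U_\alpha=X\setminus F_\alpha^{i_\alpha}$. The key step is to observe that $\{F_\alpha^{i_\alpha}:\alpha\in\omega_1\}$ has the finite intersection property: any finite subfamily is indexed by a $p\in Fn(\omega_1,2)$ with $p(\xi)=i_\xi$, and the dyadic-system condition gives $\bigcap\{F_\xi^{p(\xi)}:\xi\in\mathrm{dom}(p)\}\neq\emptyset$. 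Since these are closed subsets of the compact space $X$, compactness upgrades this to $\bigcap\{F_\alpha^{i_\alpha}:\alpha\in\omega_1\}\neq\emptyset$; any point of this intersection lies outside every $U_\alpha$, so $\bigcup\{U_\alpha:\alpha\in\omega_1\}\neq X$. As the selection was arbitrary, no selection covers $X$, which is exactly $\neg\mathsf S_1^{\omega_1}(\mathcal O_X,\mathcal O_X)$.

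I do not expect a genuine obstacle, since the whole argument is a finite-intersection-property computation. The one place requiring care is the single nontrivial move in $(b)$: passing from the finitely-supported nonempty intersections furnished by the dyadic system to a nonempty intersection indexed by all of $\omega_1$. This is precisely where compactness of $X$ is used, and it is the reason the compactness hypothesis cannot be dropped; note also that part $(b)$ uses only compactness, whereas part $(a)$ is where the Hausdorff separation enters, via the closedness of the copy $K$.
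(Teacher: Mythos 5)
Your proof is correct and follows essentially the same route as the paper's: part $(a)$ transplants the canonical dyadic system $\langle\pi_\alpha^{-1}[\{i\}]:\alpha\in\omega_1,\,i\in 2\rangle$ of $2^{\omega_1}$ along the embedding, and part $(b)$ defeats every selection via the covers $\{X\setminus F_\alpha^0,\,X\setminus F_\alpha^1\}$ and the centred-family-of-closed-sets compactness argument, exactly as in the paper. The only difference is that you spell out the routine verifications the paper leaves implicit, correctly noting that Hausdorffness enters in $(a)$ to ensure the transplanted sets are closed in $X$.
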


\begin{proof}

Since $\langle\pi_\alpha^{-1}[\{i\}]:\alpha\in\omega_1,i\in
2\rangle$ is an $\omega_1$-dyadic system in $2^{\omega_1}$, it
defines an $\omega_1$-dyadic system in $X$. This proves $(a)$.

For $(b)$, let $\langle F_\alpha^i:\alpha\in\omega_1, i\in 2\rangle$
be an $\omega_1$-dyadic system in $X$. For each
$\alpha\in\omega_1$, let $\mathcal{U}_\alpha=\{X\setminus
F_\alpha^0,X\setminus F_\alpha^1\}$. The sequence $(\mathcal
U_\alpha)_{\alpha\in\omega_1}$ witnesses the failure of $\mathsf
S_1^{\omega_1}(\mathcal O_X,\mathcal O_X)$: for every
$f\in\mbox{}^{\omega_1}2$, we have that
$\bigcap\{F_\alpha^{f(\alpha)}:\alpha\in\omega_1\}\neq\emptyset$ since
this is the intersection of a centred family of closed subsets of a
compact space.
\end{proof}

Let us also evoke Theorem 3.1 of \cite{pierce} and Corollary 4 of
\cite{balcar}:

\begin{thm}[Pierce \cite{pierce}]
\label{pierce}

If $X$ is an extremally disconnected compact $T_2$ space, then
$w(X)^{\aleph_0}=w(X)$.

\end{thm}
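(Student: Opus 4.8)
The plan is to pass from the topology to the Boolean algebra of clopen sets and then reduce the statement to a fact of cardinal arithmetic about countable products. We may assume $X$ is infinite, so that $w(X)$ is an infinite cardinal $\kappa$. First I would observe that a compact Hausdorff extremally disconnected space is zero-dimensional: given distinct points, normality yields an open set $U$ whose closure misses one of them, and $\overline U$ is clopen by extremal disconnectedness, so clopen sets separate points and hence form a base. Consequently $B=\operatorname{Clop}(X)$ is a base, whence $w(X)=|B|=\kappa$, using that every clopen set is compact and thus a finite union of basic clopen sets, which bounds $|B|$ by $\kappa$. Moreover $B$ is a \emph{complete} Boolean algebra: for any family of clopen sets, the closure of their union is open by extremal disconnectedness, hence clopen, and is readily checked to be the least upper bound.

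Next I would exploit completeness to split $B$ as a countable product. Since $X$ is infinite, I can choose a partition of unity $\{e_n:n\in\omega\}$ into countably many nonzero clopen sets (peeling off a pairwise-disjoint clopen family around a non-isolated point and absorbing the remainder). Completeness gives an isomorphism $B\cong\prod_{n\in\omega}(B\!\upharpoonright\! e_n)$: the map $b\mapsto(b\wedge e_n)_n$ is injective with inverse $(b_n)_n\mapsto\bigvee_n b_n$, the latter being well defined precisely because $B$ is complete. Writing $\kappa_n=|B\!\upharpoonright\! e_n|\ge 2$, this yields $\kappa=|B|=\prod_{n\in\omega}\kappa_n$.

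Finally, it remains to prove the purely set-theoretic lemma that $\left(\prod_{n\in\omega}\kappa_n\right)^{\aleph_0}=\prod_{n\in\omega}\kappa_n$ whenever each $\kappa_n\ge 2$; applied to the displayed decomposition this gives exactly $\kappa^{\aleph_0}=\kappa$. Reindexing $\omega$ by $\omega\times\omega$ shows $\kappa^{\aleph_0}=\prod_n\kappa_n^{\aleph_0}$. If $\mu=\sup_n\kappa_n$ is finite, then $\kappa=2^{\aleph_0}$ and the identity is immediate; if $\mu$ is infinite, then $\prod_n\kappa_n^{\aleph_0}\le(\mu^{\aleph_0})^{\aleph_0}=\mu^{\aleph_0}$, while the standard evaluation of a product along a cofinal $\omega$-sequence gives $\prod_n\kappa_n\ge\mu^{\aleph_0}$, so both quantities are sandwiched between $\prod_n\kappa_n$ and $\mu^{\aleph_0}$ and are therefore equal. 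I expect this last step --- the evaluation $\prod_n\kappa_n=\mu^{\aleph_0}$ together with the attendant König/Tarski-type product estimates --- to be the main technical point, as it is where the cofinality analysis lives; by contrast, the topological and Boolean-algebraic reductions are routine.
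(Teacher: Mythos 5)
Your topological and Boolean-algebraic reductions are all correct: zero-dimensionality of $X$, the identification $w(X)=|\operatorname{Clop}(X)|=\kappa$, completeness of $B=\operatorname{Clop}(X)$, the existence of a countable partition of unity $\{e_n:n\in\omega\}$ into nonzero clopen sets, and the bijection $B\cong\prod_{n\in\omega}(B\upharpoonright e_n)$ are all sound. But the ``purely set-theoretic lemma'' you reduce to is \emph{false}, and it fails exactly at the point you flag as the main technical one. The evaluation $\prod_n\kappa_n\ge\mu^{\aleph_0}$ for $\mu=\sup_n\kappa_n$ is valid only when the sequence $(\kappa_n)$ approaches $\mu$ cofinally (e.g.\ is nondecreasing, or attains $\mu$ infinitely often); for an arbitrary sequence with $\kappa_n\ge 2$ it can fail. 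Concretely, take $\kappa_0=\aleph_\omega$ and $\kappa_n=2$ for $n\ge 1$, in a model where $2^{\aleph_0}<\aleph_\omega$: then $\prod_n\kappa_n=\aleph_\omega\cdot 2^{\aleph_0}=\aleph_\omega$, while $\mu^{\aleph_0}=\aleph_\omega^{\aleph_0}>\aleph_\omega$ by K\"onig's theorem, since $\operatorname{cf}(\aleph_\omega)=\omega$. So $\bigl(\prod_n\kappa_n\bigr)^{\aleph_0}=\prod_n\kappa_n$ is not a theorem of cardinal arithmetic for sequences with $\kappa_n\ge2$. Nor is the gap cosmetic: an arbitrary clopen partition may consist of atoms except for one piece (each $B\upharpoonright e_n$ a two-element algebra), in which case your decomposition yields only $\kappa=\kappa\cdot2^{\aleph_0}$, which carries no $\aleph_0$-closure information. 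Ruling out such ``bad'' sequences $(\kappa_n)$ arising from a complete algebra is tantamount to the theorem itself, so the reduction to arbitrary cardinals $\ge 2$ is circular.

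For the record, the paper does not prove this statement --- it quotes it from Pierce --- and Pierce's argument uses completeness more seriously than a single product decomposition, via \emph{homogeneity}: for nonzero clopen $b$ put $\nu(b)=|B\upharpoonright b|$; by minimizing $\nu$ one finds below every nonzero element a homogeneous one (meaning $\nu(c)=\nu(h)$ for all nonzero clopen $c\subseteq h$), and by completeness a maximal disjoint family of homogeneous elements is a partition of unity $(h_i)_{i\in I}$. If $h$ is homogeneous with $\nu(h)$ infinite, then $h$ is not an atom and can be split recursively into countably many nonzero pieces, \emph{each} of relative cardinality exactly $\nu(h)$ by homogeneity, so your product identity applied to this partition gives $\nu(h)=\nu(h)^{\aleph_0}$ --- this is precisely the control over the factors that an arbitrary partition lacks. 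The atoms contribute a factor $2^{|A|}$, which satisfies $(2^{|A|})^{\aleph_0}=2^{|A|}$ when $A$ is infinite (and is absorbed otherwise). Applying your bijection $b\mapsto(b\wedge h_i)_{i\in I}$ to the homogeneous partition and using $\bigl(\prod_{i\in I}\nu(h_i)\bigr)^{\aleph_0}=\prod_{i\in I}\nu(h_i)^{\aleph_0}$ then yields $\kappa^{\aleph_0}=\kappa$. So your framework is salvageable, but the deferred step is not a standard cofinal-product evaluation; it requires the homogeneity analysis.
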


\begin{thm}[Balcar-Franek \cite{balcar}]
\label{kisl}

If $X$ is an extremally disconnected compact $T_2$ space, then $X$ can
be continuously mapped onto the Cantor cube $2^{w(X)}$.

\end{thm}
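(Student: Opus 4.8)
The plan is to translate the statement into the language of Boolean algebras via Stone duality. Assume $X$ is infinite (otherwise $2^{w(X)}$ is the Cantor set and there is nothing of interest). Since $X$ is extremally disconnected compact $T_2$, it is zero-dimensional, its algebra of clopen sets $B=\mathrm{Clop}(X)$ is \emph{complete}, and $X$ is homeomorphic to the Stone space $\mathrm{Ult}(B)$ of ultrafilters on $B$; moreover $w(X)=|B|$, since the clopen sets form a basis and every clopen set, being compact, is a finite union of basic ones. Writing $\kappa=w(X)=|B|$, I would first record the duality dictionary: a continuous surjection $X\to 2^\kappa$ corresponds exactly to a Boolean embedding into $B$ of the clopen algebra of $2^\kappa$---which is the free Boolean algebra on $\kappa$ generators---and such an embedding is given precisely by an \emph{independent} family $\{b_\alpha:\alpha\in\kappa\}\subseteq B$, i.e. a family with $\bigwedge_{\alpha\in\mathrm{dom}(p)}b_\alpha^{p(\alpha)}\neq 0$ for every $p\in Fn(\kappa,2)$ (where $b^0=b$ and $b^1$ denotes the complement of $b$). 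Thus the whole statement reduces to producing an independent family of size $\kappa=|B|$ in the complete Boolean algebra $B$.

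Given such a family, the associated map $g\colon X\to 2^\kappa$ sends an ultrafilter $p$ to the characteristic function of the set $\{\alpha\in\kappa:b_\alpha\in p\}$. I would check that $g$ is continuous---the preimages of the subbasic clopen sets are $b_\alpha$ and its complement---and surjective: for any $f\in 2^\kappa$ the family $\{b_\alpha^{f(\alpha)}:\alpha\in\kappa\}$ is centred precisely because every finite meet is nonzero by independence, so by compactness of $X$ it extends to an ultrafilter $p$ with $g(p)=f$. This part is routine; note that it mirrors the centredness argument already used in Lemma \ref{diagram-dyadic}$(b)$, except that the two pieces $b_\alpha$ and $-b_\alpha$ are now complementary \emph{clopen} sets rather than merely disjoint closed ones, which is exactly what makes $g$ defined on all of $X$ rather than on a closed subset.

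The heart of the matter---and the main obstacle---is the purely combinatorial assertion that every infinite complete Boolean algebra $B$ carries an independent family of cardinality $|B|$. Here I would first invoke Pierce's Theorem \ref{pierce} to obtain the cardinal identity $\kappa^{\aleph_0}=\kappa$, which bounds the number of finite Boolean combinations that must be kept nonzero along the way. The family would then be built by a transfinite recursion of length $\kappa$, adjoining at each stage a new element independent from all finitely many combinations of the previously chosen ones; completeness of $B$ is what permits the refinements of maximal antichains and the formation of the suprema needed to guarantee that a fresh independent element can always be found. Organizing the algebra into sufficiently dense ``homogeneous'' pieces and verifying that independence can be maintained cofinally often is the delicate part---this is precisely the content of the Balcar--Franek theorem, and I would expect essentially all of the effort to lie there, the Stone-dual translation and the surjectivity check being bookkeeping by comparison.
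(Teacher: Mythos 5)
Your Stone-duality translation is correct and cleanly done: for an infinite extremally disconnected compact $T_2$ space $X$, the clopen algebra $B$ is complete with $|B|=w(X)$, continuous surjections $X\to 2^\kappa$ correspond to embeddings of the free algebra on $\kappa$ generators into $B$, i.e.\ to independent families $\{b_\alpha:\alpha\in\kappa\}\subseteq B$, and your surjectivity check via centredness and compactness is fine. But note that the paper does not prove this theorem at all --- it quotes it as Corollary 4 of \cite{balcar} and uses it as a black box --- so the entire mathematical content of the statement is the combinatorial assertion you isolate: every infinite complete Boolean algebra $B$ has an independent family of cardinality $|B|$. At exactly this point your proposal stops being a proof: you say the construction ``is precisely the content of the Balcar--Franek theorem,'' which makes the argument circular, a correct reduction of the theorem to itself.

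Moreover, the greedy scheme you sketch would fail as stated. At stage $\alpha$ of such a recursion one needs an element $b$ splitting \emph{every} nonzero element of the subalgebra generated by $\{b_\beta:\beta\in\alpha\}$; if some finite meet $\bigwedge_{\xi\in\mathrm{dom}(p)}b_\xi^{p(\xi)}$ happens to be an atom of $B$ (which nothing in a stage-by-stage choice prevents, and which is unavoidable when $B$ is atomic, e.g.\ $B=\mathcal P(\omega)$), then no such $b$ exists and independence has been irrecoverably destroyed by an earlier choice. So the choices must be coordinated globally rather than adjoined one at a time, and even in the atomless case completeness plus Pierce's identity $w(X)^{\aleph_0}=w(X)$ (Theorem \ref{pierce}) does not by itself yield a ``fresh independent element'' at each stage. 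The actual Balcar--Franek argument is a genuinely involved case analysis --- reduction to homogeneous algebras, the Erd\H{o}s--Tarski dichotomy on whether cellularity is attained, and the assembly of independent families from systems of maximal antichains (independent matrices) --- none of which is recoverable from the recursion you describe. In short: the routine outer layer is right, but the theorem itself remains unproved in your proposal.
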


In particular, if $X$ is an extremally disconnected compact $T_2$
space of uncountable weight, it follows from Theorem \ref{pierce} that
$w(X)\ge\mathfrak c$; thus, by Theorems \ref{sap} and
\ref{kisl}, there is a $\mathfrak c$-dyadic system in $X$ ---
which, in view of Lemma \ref{diagram-dyadic} and Theorem \ref{st1},
implies that $X$ is destructible.\footnote{\label{foot}This fact also
  follows from Corollary \ref{convseq}, since extremally disconnected
  $T_2$ spaces have no non-trivial convergent sequences (see
  \cite[Theorem 1.3]{gleason}).} This shows that perfect
preimages of compact Rothberger $T_2$ spaces need not be Rothberger or
even indestructible: let $X$ be the one-point compactification of a
discrete space of cardinality $\aleph_1$; then the Stone space
$\theta(X)$ of the regular closed algebra of $X$ is a perfect preimage
of $X$ (see \cite[Theorem 3.2]{gleason}), yet $\theta(X)$ is
destructible --- by the previous argument --- whereas $X$ is
Rothberger.

Thus we have the following diagram for $X$ compact $T_2$:\footnote{It
is worth remarking that some of the implications do not require
compactness of $X$.}

\newpage

\setlength{\unitlength}{1mm}
\begin{picture}(0,130)(-10,-15)

\put(12,100){$2^{\omega_1}\hookrightarrow X$}

\put(63,102){$X$ is extremally disconnected}
\put(74,98){$+\;w(X)\ge\aleph_1$}

\put(45,77){$\exists\,\omega_1$-dyadic}
\put(45,73){system in $X$}

\put(0,52){$\exists F\subseteq X$ closed non-empty}
\put(1,48){s.t. $\chi(x,F)\ge\aleph_1\;\forall x\in F$}

\put(70,50){$\neg\mathsf S_1^{\omega_1}(\mathcal O_X,\mathcal O_X)$}

\put(5,27){$\exists\,\omega_1$-\v Cech-Posp\'i\v sil}
\put(12,23){tree in $X$}

\put(69,25){$X$ is destructible}

\put(12,0){$|X|\ge 2^{\aleph_1}$}

\put(82,46){\vector(0,-1){15}}

\put(19,45){\vector(0,-1){12}}

\put(19,20){\vector(0,-1){15}}

\put(19,57){\vector(0,1){39}}

\put(40,26){\vector(1,0){25}}

\put(45,51){\vector(1,0){20}}
\put(54,50){\footnotesize{$|$}}

\put(44,70){\vector(-1,-1){13}}

\put(64,70){\vector(1,-1){13}}

\put(31,96){\vector(1,-1){13}}

\put(77,96){\vector(-1,-1){13}}

\put(29,58){\vector(1,1){13}}

\put(60,101){\line(-1,0){30}}

\put(45,102){\scriptsize{$\emptyset$}}

\put(32,5){\vector(2,1){33}}
\put(47,12){\footnotesize{$\setminus$}}

\put(34.4,64){\footnotesize{$_\setminus$}}

\put(51.3,53){\scriptsize{$(\mathrm{CH})$}}

\put(2,77){\scriptsize{$(\mathrm{MA}+\neg\mathrm{CH}$}}
\put(2,74){\scriptsize{$+\;w(X)<\mathfrak{c})$}}

\put(20.6,75.5){\scriptsize{\cite{shapiro}}}

\put(52,46.5){\scriptsize{$2^{\omega_1}_{\mathrm{lex}}$}}

\put(30,67){\scriptsize{$2^{\omega_1}_{\mathrm{lex}}$}}

\put(32.5,87){\scriptsize{\ref{diagram-dyadic}}}

\put(71,65){\scriptsize{\ref{diagram-dyadic}}}

\put(38,61){\scriptsize{\ref{sap}}}

\put(71,87){\scriptsize{\ref{kisl} + \ref{sap}}}

\put(77,38.5){\scriptsize{\ref{st1}}}

\put(20.6,38.5){\scriptsize{\cite{cechp}}}

\put(47,27.6){\scriptsize{\ref{cptree} + \ref{st1}}}

\put(48.5,8.5){\scriptsize{$\mathbb A(\kappa)$,}}
\put(48,5){\scriptsize{$\kappa\ge 2^{\aleph_1}$}}

\end{picture}

The implication in the diagram that assumes
$\mathrm{MA}+\neg\mathrm{CH}+w(X)<\mathfrak{c}$
is a parti\-cular case of L. B. Shapiro's Theorem 1.3 of
\cite{shapiro}.
Each non-implication is marked with the space that is a counterexample
to it. We denote by $\mathbb A(\kappa)$ the one-point compactification
of the discrete space of size $\kappa$, and by
$2^{\omega_1}_{\mathrm{lex}}$ the space considered in the proof of
Theorem \ref{lex} presented in Section 3.

The fact that $2^{\omega_1}_{\mathrm{lex}}$ shows the
non-implication that points to ``there is an $\omega_1$-dyadic system
in $X$'' in the diagram follows from Lemma \ref{pichi} and Theorem
\ref{sap}; since this space has weight $\mathfrak c$ (see the
paragraph preceding Lemma \ref{compl}), we have, in particular, that
the weight constraint in Shapiro's theorem is the best possible.
This can also be seen from the fact that the properties
``$2^{\omega_1}\hookrightarrow X$'' and ``$X$ is extremally
disconnected'' cannot hold simultaneously --- represented in
the diagram by the line labeled with the symbol $\emptyset$ ---, which
follows from the (already previously mentioned) fact that every
convergent sequence in an extremally disconnected Hausdorff space is
trivial; thus any extremally disconnected compact Hausdorff space of
uncountable weight --- e.g. $\beta\omega$ --- has an
$\omega_1$-dyadic system but does not include a copy of
$2^{\omega_1}$.
We point out again that Theorem \ref{pierce} implies that
any such space would necessarily have weight $\ge\mathfrak c$,
which is in accordance with Shapiro's result.

From such considerations regarding the space from Section 3 we can
derive the following observation concerning Shapiro's theorem:

\begin{prop}
\label{shapw1}

Let $(\Upsilon)$ denote the statement:

\begin{quote}

If $X$ is compact $T_2$ of weight $\aleph_1$ and there is a closed
non-empty $F\subseteq X$ with $\chi(x,F)=\aleph_1$ for all $x\in F$,
then $X$ includes a copy of $2^{\omega_1}$.

\end{quote}

Then
$\mathrm{MA}+\neg\mathrm{CH}\rightarrow(\Upsilon)\rightarrow\neg\mathrm{CH}$.

\end{prop}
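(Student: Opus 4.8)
The plan is to establish the two implications of the chain $\mathrm{MA}+\neg\mathrm{CH}\rightarrow(\Upsilon)\rightarrow\neg\mathrm{CH}$ separately, the first being a direct application of Shapiro's theorem and the second a contrapositive argument using the space $2^{\omega_1}_{\mathrm{lex}}$ from Section \ref{sectionlex}.

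For $\mathrm{MA}+\neg\mathrm{CH}\rightarrow(\Upsilon)$, I would simply feed the hypotheses of $(\Upsilon)$ into Shapiro's theorem. Assuming $\mathrm{MA}+\neg\mathrm{CH}$ we have $\mathfrak{c}>\aleph_1$, so any compact $T_2$ space $X$ of weight $\aleph_1$ automatically satisfies $w(X)=\aleph_1<\mathfrak{c}$. Given a closed non-empty $F\subseteq X$ with $\chi(x,F)=\aleph_1\ge\aleph_1$ for all $x\in F$, the hypotheses of L. B. Shapiro's Theorem 1.3 of \cite{shapiro} are met; this is exactly the implication labeled \cite{shapiro} in the diagram, whose conclusion is $2^{\omega_1}\hookrightarrow X$. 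Hence $X$ includes a copy of $2^{\omega_1}$, which is $(\Upsilon)$.

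For the converse $(\Upsilon)\rightarrow\neg\mathrm{CH}$, I would argue by contraposition and exhibit, under $\mathrm{CH}$, a compact $T_2$ space violating $(\Upsilon)$; the candidate is $X=2^{\omega_1}_{\mathrm{lex}}$. It is compact $T_2$ by Lemma \ref{compact} and has weight $\mathfrak{c}=\aleph_1$ under $\mathrm{CH}$. Taking $F=X$, I would verify that $\chi(x,X)=\aleph_1$ for every $x\in X$: since no point of $X$ is a $G_\delta$ (Lemma \ref{noGd}) we have $\psi(x,X)\ge\aleph_1$, and for compact $T_2$ spaces $\chi(x,X)=\psi(x,X)$, while $\chi(x,X)\le w(X)=\aleph_1$; combining these yields $\chi(x,X)=\aleph_1$. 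Thus $X$ meets all the premises of $(\Upsilon)$, and it only remains to show that $X$ does not include a copy of $2^{\omega_1}$.

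This last step is the crux, and it is precisely where the gap between $\chi$ and $\pi\chi$ is decisive. Were $2^{\omega_1}$ to embed in $X$, Lemma \ref{diagram-dyadic}$(a)$ would yield an $\omega_1$-dyadic system in $X$, whence by the theorem of \v{S}apirovski\u{\i} (Theorem \ref{sap}, condition $(c)$ for $\kappa=\omega_1$) there would exist a closed non-empty $F'\subseteq X$ with $\pi\chi(x,F')\ge\aleph_1$ for all $x\in F'$. This is exactly what Lemma \ref{pichi} rules out, since it provides in every closed subset of $X$ a point of countable $\pi$-character. Hence no copy of $2^{\omega_1}$ embeds, so $X$ witnesses the failure of $(\Upsilon)$ under $\mathrm{CH}$, giving $(\Upsilon)\rightarrow\neg\mathrm{CH}$. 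The only genuinely delicate point is keeping careful track of the distinction between the character hypothesis of $(\Upsilon)$ and the weaker $\pi$-character in \v{S}apirovski\u{\i}'s characterization: it is exactly the inequality $\pi\chi\le\chi$, strict for $2^{\omega_1}_{\mathrm{lex}}$, that makes this space a counterexample and around which both halves of the proposition turn.
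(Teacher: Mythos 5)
Your proof is correct and follows essentially the same route the paper intends: the first implication is the instance of Shapiro's theorem used in the diagram (under $\mathrm{MA}+\neg\mathrm{CH}$ one has $w(X)=\aleph_1<\mathfrak{c}$), and the second is the paper's own derivation that under $\mathrm{CH}$ the space $2^{\omega_1}_{\mathrm{lex}}$ --- compact $T_2$, of weight $\mathfrak{c}=\aleph_1$, with $\chi(x,X)=\psi(x,X)=\aleph_1$ everywhere by Lemma \ref{noGd} --- admits no copy of $2^{\omega_1}$, via Lemma \ref{diagram-dyadic}$(a)$, \v{S}apirovski\u{\i}'s Theorem \ref{sap}, and Lemma \ref{pichi}. (One could even shortcut the last step by applying Lemma \ref{pichi} directly to a closed copy of $2^{\omega_1}$, whose points all have uncountable $\pi$-character, but your detour through the $\omega_1$-dyadic system is exactly what the paper cites.)
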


Still in the diagram, it is also worth noting that the existence of an
$\omega_1$-dyadic system does not follow from the existence of an
$\omega_1$-\v Cech-Posp\'i\v sil tree.
Note that there is an $\omega_1$-dyadic system in $X$ if and only if
there is a sequence $(\mathcal{U}_\alpha)_{\alpha\in\omega_1}$ of
two-element open covers of $X$ witnessing the failure of $\mathsf
S_1^{\omega_1}(\mathcal O_X,\mathcal O_X)$. Similarly, there is an
$\omega_1$-\v Cech-Posp\'i\v sil tree in $X$ if and only if One has a
winning strategy in $\mathsf G_1^{\omega_1}(\mathcal O_X,\mathcal
O_X)$ in which she plays only two-element open covers of $X$. This
raises the following questions:

\begin{prob}
\label{S1bin}

For $X$ compact, is $\mathsf S_1^{\omega_1}(\mathcal O_X,\mathcal
O_X)$ equivalent to the non-existence of an $\omega_1$-dyadic system
in $X$?

\end{prob}

\begin{prob}
\label{bindestr}

Is destructibility of a space $X$ equivalent to the existence of an
$\omega_1$-\v Cech-Posp\'i\v sil tree in $X$?

\end{prob}

\section{Indestructibility and large cardinals}

\label{sectiontrees}

In the paper \cite{usuba}, the following results were established:

\begin{thm}[Tall-Usuba \cite{usuba}]
\label{BC}

If it is consistent with $\mathrm{ZFC}$ that there is an
inaccessible cardinal, then it is consistent with $\mathrm{ZFC}$ that
every Lindel\" of $T_3$ indestructible space
of weight $\le\aleph_1$ has size $\le\aleph_1$.

\end{thm}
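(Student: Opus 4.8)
The plan is to realize the conclusion in a suitable Lévy-collapse extension. Assuming (after an innocuous preliminary forcing) that $\mathbf M\models\mathrm{ZFC}+\mathrm{GCH}$ and that $\kappa$ is inaccessible in $\mathbf M$, I would force with $\mathbb P=\mathrm{Coll}(\omega_1,{<}\kappa)$, whose conditions are the countable partial functions $p$ on $\kappa\times\omega_1$ with $p(\alpha,\xi)\in\alpha$. This $\mathbb P$ is countably closed and, by inaccessibility of $\kappa$, has the $\kappa$-chain condition; hence in $\mathbf M[G]$ the cardinals $\le\aleph_1$ and the cardinals $\ge\kappa$ are preserved, every cardinal strictly between is collapsed to $\aleph_1$, $\kappa=\aleph_2$, and---since countably closed forcing adds no reals---$\mathrm{CH}$ holds. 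The feature I would exploit is that this collapse destroys every Kurepa tree, so $\mathrm{KH}$ \emph{fails} in $\mathbf M[G]$: by Solovay's theorem recalled in Section 2 (and its forcing companion) no $\omega_1$-tree has $\aleph_2$ cofinal branches. It then suffices to establish, as a theorem of $\mathrm{ZFC}+\neg\mathrm{KH}$, that every Lindel\"of $T_3$ indestructible space $X$ with $w(X)\le\aleph_1$ satisfies $|X|\le\aleph_1$.

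To prove this $\mathrm{ZFC}$ assertion I argue by contradiction, assuming $|X|\ge\aleph_2$, and aim to attach to $X$ a Kurepa tree, in the spirit of the tree--line correspondence of \cite{stevoh} used elsewhere in the paper. Since $X$ is $T_3$ of weight $\le\aleph_1$ it embeds into $[0,1]^{\omega_1}$; fixing a countable base of $[0,1]$ one obtains a first, crude Hausdorff tree $T$ of height $\omega_1$ recording the coordinatewise initial behaviour of the points, in which distinct points of $X$ determine distinct cofinal branches, so that $T$ has at least $\aleph_2$ of them. Under $\mathrm{CH}$ each level of $T$ has size $\le\mathfrak c=\aleph_1$; the difficulty is that the levels need not be \emph{countable}---indeed $X$ may project onto a full second-countable cube---so $T$ is not yet an $\omega_1$-tree. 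The remedy I would pursue is to replace this crude tree by the tree attached to an $\omega_1$-filtration $X=\bigcup_{\delta<\omega_1}X_\delta$ by closed second-countable pieces arising from an increasing chain of countably closed elementary submodels, so that the $\delta$-th level records only the gaps created by $X_\delta$.

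The crux---and the step I expect to be the main obstacle---is to show that each level of $T$ is genuinely countable, whence $T$ is an $\omega_1$-tree with $\ge\aleph_2$ branches, a Kurepa tree contradicting $\neg\mathrm{KH}$. This is precisely where indestructibility is indispensable and weight alone is insufficient. I would argue contrapositively through the game characterization of Theorem \ref{st1}: were some level uncountable, the $\aleph_1$ many gaps residing there would have to be organized into an $\omega_1$-dyadic system, or an $\omega_1$-\v Cech--Posp\'i\v sil tree, inside a closed subspace of $X$, so that $X$ would be destructible by Lemma \ref{diagram-dyadic} (respectively Proposition \ref{cptree}) together with Lemma \ref{preserv}, contrary to hypothesis. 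Making this precise is delicate: one must arrange the filtration so that uncountably many gaps at a single level are \emph{simultaneously separating}, confirm that they persist as disjoint closed sets enjoying the centred-intersection property of a dyadic system, and verify that the associated two-element covers genuinely witness $\neg\mathsf S_1^{\omega_1}(\mathcal O_X,\mathcal O_X)$. This orchestration of the weight-$\aleph_1$ coordinate structure, the elementary-submodel filtration, and the topological-game description of indestructibility is the technical heart of the argument. A possibly cleaner alternative is to run the reflection directly at the level of the collapse---factoring $\mathbb P$ at a stage $\delta<\kappa$ of cofinality $\omega_1$ and using the countably closed tail to manufacture the offending cover---thereby lowering Tall's supercompact argument (Theorem \ref{supercompact}) to an inaccessible by means of the weight bound.
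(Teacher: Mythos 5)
Your choice of model is the right one: the paper does not prove Theorem \ref{BC} itself but quotes it from \cite{usuba}, remarking (in the proof of Proposition \ref{final}) that it is proven there by showing the conclusion holds after L\'evy-collapsing an inaccessible to $\aleph_2$ with countable conditions; and your bookkeeping about $\mathrm{Coll}(\omega_1,{<}\kappa)$ --- countable closure, $\kappa$-c.c., $\mathrm{CH}$, and the failure of $\mathrm{KH}$ in the extension (which is Silver's theorem \cite{silver}; the Solovay result recalled in Section 2 goes in the opposite direction) --- is correct. The genuine gap is your pivotal reduction: the claim that ``every Lindel\"of $T_3$ indestructible space of weight $\le\aleph_1$ has size $\le\aleph_1$'' is a theorem of $\mathrm{ZFC}+\neg\mathrm{KH}$. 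As literally stated this cannot be right, because the quoted statement outright implies $\mathrm{CH}$: if $\mathfrak c\ge\aleph_2$, then any $X\subseteq\mathbb R$ of size $\aleph_2$ is hereditarily Lindel\"of, hence indestructible by Theorem 6(a) of \cite{tall95}, and has weight $\aleph_0$ --- exactly the device used in this paper's proof of Theorem \ref{weak} --- whereas $\neg\mathrm{KH}$ carries no such consequence. Your sketch silently invokes $\mathrm{CH}$ anyway, but even the amended reduction, that $\mathrm{CH}+\neg\mathrm{KH}$ implies the statement, is not established by anything you write and would be substantially stronger than Tall and Usuba's theorem; what is actually known is only the converse direction, namely this paper's Theorem \ref{inaccBC} ($\mathrm{KH}$ yields a compact indestructible counterexample of weight $\le\aleph_1$ and size $>\aleph_1$). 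So the consistency proof cannot be routed through a $\mathrm{ZFC}$ tree dichotomy unless you prove one, and your sketch does not.

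Two steps of that sketch fail concretely. First, the proposed filtration $X=\bigcup_{\delta<\omega_1}X_\delta$ into closed second-countable pieces is impossible: under $\mathrm{CH}$ a second-countable Hausdorff space has cardinality $\le\mathfrak c=\aleph_1$, so $\aleph_1$ many such pieces cover at most $\aleph_1$ points while $|X|\ge\aleph_2$; for the same reason an $\omega_1$-chain of elementary submodels of size $\aleph_1$ cannot exhaust $X$, so the tree whose levels you propose to make countable is never actually defined. Second, the crucial step ``an uncountable level yields an $\omega_1$-dyadic system or an $\omega_1$-\v Cech-Posp\'i\v sil tree, hence destructibility'' imports compactness that you do not have: Lemma \ref{diagram-dyadic}$(b)$ requires the full intersections $\bigcap\{F_\alpha^{f(\alpha)}:\alpha\in\omega_1\}$ to be non-empty, which is extracted there from compactness, and the limit stages of a \v Cech-Posp\'i\v sil tree likewise need (countable) compactness; in a merely Lindel\"of space an $\omega_1$-dyadic system does not witness $\neg\mathsf S_1^{\omega_1}(\mathcal O_X,\mathcal O_X)$, and Proposition \ref{cptree} gives nothing without the tree, so indestructibility of $X$ is never genuinely engaged. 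Your closing one-sentence alternative --- factoring the collapse at a stage of uncountable cofinality and playing the countably closed tail against indestructibility, with the weight bound $\le\aleph_1$ standing in for the supercompactness of Theorem \ref{supercompact} --- is in fact far closer to how \cite{usuba} actually proceeds, but as written it contains none of the required content (which stage captures the relevant names, why the quotient is countably closed over the appropriate intermediate model, and how the offending cover is produced in the final model rather than an intermediate one), so it cannot rescue the argument.
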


\begin{thm}[Tall-Usuba \cite{usuba}]
\label{proj}

If it is consistent with $\mathrm{ZFC}$ that there is an
inaccessible cardinal, then it is consistent with $\mathrm{ZFC}$ that
the $\aleph_1$-Borel Conjecture holds.

\end{thm}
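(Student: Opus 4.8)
The plan is to obtain the $\aleph_1$-Borel Conjecture in the very model produced for Theorem \ref{BC}, by showing that the conclusion of that theorem already entails the Conjecture once $\mathrm{CH}$ is available. Concretely, I would start from a model of $\mathrm{ZFC}$ with an inaccessible cardinal $\kappa$ and, passing first to $\mathbf L$ if necessary, assume $\mathrm{GCH}$ below $\kappa$. Applying the countably closed collapse used in Theorem \ref{BC} yields a model $\mathbf M$ in which the statement $(S)$, \emph{``every indestructible Lindel\"of $T_3$ space of weight $\le\aleph_1$ has cardinality $\le\aleph_1$''}, holds; since the collapse is countably closed it adds no reals, so $\mathrm{CH}$ is preserved. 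It then suffices to prove in $\mathrm{ZFC}+\mathrm{CH}+(S)$ both implications of Definition \ref{aleph1BC}.

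The forward implication is immediate. If $X$ is an indestructible Lindel\"of $T_3$ space and $Y\subseteq[0,1]^{\omega_1}$ is a continuous image of $X$, then $Y$ is Lindel\"of, $T_3$, and of weight $\le\aleph_1$; moreover $Y$ is indestructible by Lemma \ref{preserv} and Theorem \ref{st1}, so $(S)$ gives $|Y|\le\aleph_1$. The converse is where the work lies. Assuming every continuous image of $X$ in $[0,1]^{\omega_1}$ has size $\le\aleph_1$, suppose toward a contradiction that $X$ is destructible; by Theorem \ref{st1}, One has a winning strategy $\sigma$ in $\mathsf G_1^{\omega_1}(\mathcal O_X,\mathcal O_X)$. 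Since $X$ is Lindel\"of and regular, cozero sets form a base, and I would first replace $\sigma$ by a strategy that plays only countable cozero covers: refine each cover One would play to a countable cozero cover, and when Two selects a cozero set $V$ feed to $\sigma$ a fixed member of the original cover containing $V$. The resulting play of $\sigma$ is legal and hence lost by Two, and since the cozero sets are contained in the corresponding original sets, One still wins.

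Now under $\mathrm{CH}$ the tree of all partial plays against this strategy has at most $\aleph_1$ nodes (each node branches into countably many choices for Two, and $\aleph_0^{\aleph_0}=\aleph_1$), so the strategy references a family $\mathcal B$ of at most $\aleph_1$ cozero sets. Choosing for each $C\in\mathcal B$ a continuous $g_C\colon X\to[0,1]$ with $C=g_C^{-1}[(0,1]]$ and setting $f=(g_C)_{C\in\mathcal B}\colon X\to[0,1]^{\mathcal B}$, every $C\in\mathcal B$ is of the form $f^{-1}[O]$ and is thus $f$-saturated. Consequently every cover One plays pushes forward to an open cover of $Y=f[X]$ and Two's responses lift back, so the strategy transfers and $Y$ is destructible. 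But $|Y|\le\aleph_1$ by hypothesis, whence $Y$ is \emph{indestructible} by Lemma \ref{lealeph1} --- a contradiction, completing the converse and hence the proof that $\mathbf M$ models the $\aleph_1$-Borel Conjecture.

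The main obstacle I anticipate is bookkeeping rather than conceptual: one must verify that the collapse of Theorem \ref{BC} genuinely preserves $\mathrm{CH}$ (this is why I insist on $\mathrm{GCH}$ in the ground model and on a countably closed collapse), since the bound $|\mathcal B|\le\aleph_1$ --- and with it the landing of $f$ inside $[0,1]^{\omega_1}$ rather than a larger cube --- rests squarely on $\aleph_0^{\aleph_0}=\aleph_1$. The accompanying technical point is the reduction to cozero covers, which is exactly what makes the sets played by One into preimages under $f$ and thereby lets $\sigma$ descend to a genuine winning strategy on the weight-$\aleph_1$ image $Y$; without it an arbitrary open set played by One need not be $f$-saturated and the transfer of the game would fail.
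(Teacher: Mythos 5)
Your argument is essentially correct, but note that the paper contains no proof of Theorem \ref{proj} to compare against: it is imported verbatim from Tall--Usuba \cite{usuba}. What can be checked is consistency with the machinery the paper does develop, and there your reconstruction matches it well. The forward direction is exactly the paper's combination of Lemma \ref{preserv} with Theorem \ref{st1} (a continuous image $Y\subseteq[0,1]^{\omega_1}$ is Lindel\"of, $T_3$, of weight $\le\aleph_1$, and inherits the non-existence of a winning strategy for One), and the bridge you build between the statement $(S)$ of Theorem \ref{BC} and the full Conjecture --- reducing a destructible Lindel\"of Tychonoff space, under $\mathrm{CH}$, to a destructible continuous image of weight $\le\aleph_1$ by pruning One's winning strategy in $\mathsf G_1^{\omega_1}(\mathcal O,\mathcal O)$ to countable cozero covers and counting the $\le\aleph_1$ positions of countable length --- is precisely the kind of projection argument behind \cite{usuba}; the paper's own remark before Corollary \ref{equic.BC}, that Tychonoff spaces of weight $\le\aleph_1$ embed in $[0,1]^{\omega_1}$, and Proposition \ref{final}'s statement that Theorem \ref{BC} is established in the L\'evy-collapse model, confirm that this is the intended route. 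Your key steps all check out: the cozero refinement is legitimate since Lindel\"of $T_3$ implies Tychonoff, the lifted play against the modified strategy is legal so the transferred strategy on $Y=f[X]$ wins (the $f$-saturation of the played sets is indeed the crucial point), and the final contradiction with Lemma \ref{lealeph1} is correct.

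One inessential inaccuracy: the detour through $\mathbf L$ and ground-model $\mathrm{GCH}$ is not needed to secure $\mathrm{CH}$ in the extension. The L\'evy collapse of the inaccessible $\kappa$ to $\aleph_2$ with countable conditions is countably closed, hence adds no reals, and since $(2^{\aleph_0})^{\mathbf M}<\kappa$ is collapsed to $\aleph_1$, the extension satisfies $\mathrm{CH}$ outright, whatever the ground model's continuum was. Also, your node count is stated slightly loosely: the bound is that there are $\aleph_1$ levels indexed by countable ordinals and each level has at most $\aleph_0^{\aleph_0}=\aleph_1$ positions under $\mathrm{CH}$, giving $\aleph_1$ positions in total --- the conclusion $|\mathcal B|\le\aleph_1$ is right, but it is the full tree of positions of all countable lengths, not just the branching at a node, that needs $\mathrm{CH}$.
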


\begin{thm}[Tall-Usuba \cite{usuba}]
\label{psi}

If it is consistent with $\mathrm{ZFC}$ that there is a weakly compact
cardinal, then it is consistent with $\mathrm{ZFC}$ that
there is no Lindel\" of $T_1$ space of pseudocharacter
$\le\aleph_1$ and size $\aleph_2$.

\end{thm}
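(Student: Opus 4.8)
The plan is to obtain the model by L\'evy-collapsing the weakly compact cardinal, in the spirit of the argument behind Theorem \ref{supercompact}. Start with a ground model $V$ containing a weakly compact cardinal $\kappa$, and force with $\mathbb P=\mathrm{Coll}(\omega_1,{<}\kappa)$, the countably closed collapse that turns $\kappa$ into $\aleph_2$ while preserving $\aleph_1$ (being countably closed, $\mathbb P$ adds no new reals or countable sequences, and since $\kappa$ is inaccessible $\mathbb P$ is $\kappa$-cc, so $\kappa$ remains regular and becomes $\aleph_2$). Working in $V[G]$, I would assume toward a contradiction that $X$ is a Lindel\"of $T_1$ space with $\psi(X)\le\aleph_1$ and $|X|=\aleph_2=\kappa$, and aim to show that weak compactness forces the cardinality of such a space to \emph{reflect}, so that $|X|$ cannot be exactly $\kappa$.

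To set up the reflection I would fix, in $V$, a $\mathbb P$-name $\dot X$ for the space together with names for its topology and for a system of pseudobases $\langle\dot{\mathcal B}_\alpha:\alpha<\kappa\rangle$ (each $\dot{\mathcal B}_\alpha$ a name for an $\aleph_1$-sized family with $\bigcap\dot{\mathcal B}_\alpha=\{\alpha\}$, after identifying the underlying set of $X$ with $\kappa$), and place all of this data in a transitive model $M\models\mathrm{ZF}^-$ of size $\kappa$ that is closed under ${<}\kappa$-sequences and contains $\kappa$ and $\mathbb P$. The embedding characterization of weak compactness yields an elementary $j\colon M\to N$ with $N$ transitive and $\mathrm{crit}(j)=\kappa$. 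Factoring $j(\mathbb P)=\mathbb P\ast\dot{\mathbb Q}$, where $\dot{\mathbb Q}$ collapses the interval $[\kappa,j(\kappa))$, and building a generic $H$ for $\mathbb Q$ over $V[G]$ (using the high closure of the tail, as is standard for lifting through L\'evy collapses), I would lift $j$ to $j\colon M[G]\to N[G\ast H]$. By elementarity $Y:=j(\dot X)[G\ast H]$ is, in $N[G\ast H]$, a Lindel\"of $T_1$ space with $\psi(Y)\le\aleph_1$ (note $j(\aleph_1)=\aleph_1$) and $|Y|=j(\kappa)>\kappa$, and since $j$ fixes the underlying set pointwise below $\kappa$, $X$ sits inside $Y$ as the subspace on the points ${<}\kappa$.

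The heart of the proof --- and the step I expect to be the main obstacle --- is to extract a contradiction from the existence of this ``tall'' reflection $Y$: one must show that the Lindel\"of property of $X$ together with the $\aleph_1$-sized pseudobases is incompatible with $Y$ having points beyond $\kappa$, thereby forcing $|X|<\kappa$ and contradicting $|X|=\kappa$. Concretely, for a point $p\in Y\setminus X$ one would work with its pseudobase $\{V_\xi:\xi<\omega_1\}$ and try to convert the relation $\bigcap_\xi V_\xi=\{p\}$ into an open cover of $X$ which, by Lindel\"ofness, admits a countable subcover already separating $p$ from $X$ below some ordinal ${<}\kappa$; reflecting this back along $j$ then bounds $|X|$ strictly below $\kappa$. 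The delicate points are that the collapse adds new $\omega_1$-sequences, so the pseudobases genuinely depend on $G$ and must be carried faithfully by the names and reflected by $j$, and that $X$ is only assumed $T_1$ (not regular), which makes passing between the closed-set information encoded by $\psi\le\aleph_1$ and the open-cover information governed by Lindel\"ofness the technically demanding part. It is worth noting that the hypothesis can also be read through the tree property: collapsing a weakly compact cardinal makes $\aleph_2$ carry the tree property, and the obstruction to a size-$\aleph_2$ example can alternatively be phrased as the nonexistence of an associated $\omega_2$-Aronszajn-type tree of pseudobase approximations.
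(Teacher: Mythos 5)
First, note that the paper does not prove this statement at all: Theorem \ref{psi} is quoted from Tall--Usuba \cite{usuba} (the paper only proves the \emph{converse} consistency direction, Theorem \ref{weak}, via the Jensen--K\"onig tree $T(\mathcal F)$ and the line $L_T$), so your attempt can only be judged on its own merits, and as it stands it has a genuine gap that you yourself flag: the entire content of the theorem is the step you defer. After L\'evy-collapsing $\kappa$ and lifting $j$ you obtain $Y=j(\dot X)[G\ast H]$ which, by elementarity, is a Lindel\"of $T_1$ space of pseudocharacter $\le\aleph_1$ and size $j(\kappa)=\aleph_2^{N[G\ast H]}$ --- that is, $N[G\ast H]$ regards $Y$ as exactly the same kind of object one level up, so elementarity plus the lift yields no contradiction whatsoever. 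The needed mechanism is a genuine downward reflection of the Lindel\"of property (in the style of $\Pi^1_1$-indescribability applied to $\mathbb P$-names), showing that a name for such a space of size exactly $\kappa$ would reflect to some $\alpha<\kappa$ in a way incompatible with the pseudocharacter bound; your sketch of ``convert the pseudobase of $p\in Y\setminus X$ into an open cover of $X$'' is precisely the missing argument, and with only $T_1$ separation there is no indicated route from the closed sets $\bigcap_\xi V_\xi=\{p\}$ to open covers of $X$ that Lindel\"ofness can act on. An honest admission of an obstacle is not a proof of the theorem's core.

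There are also two technical defects in the setup. (1) You cannot ``build a generic $H$ for $\mathbb Q$ over $V[G]$ using the high closure of the tail'': $\mathrm{Coll}(\omega_1,[\kappa,j(\kappa)))$ is merely countably closed, not ${<}\kappa$-closed, and $N[G]$ has $\kappa$-many dense sets, so the diagonalization breaks down at limits of cofinality $\omega_1$; the standard repair (as in Silver/Kunen-style tree-property arguments, cf.\ the branch-absorption idea underlying Lemma \ref{kurepano2}) is to actually \emph{force} with the tail and then pull the reflected information back to $V[G]$ using countable closure and a no-new-branches lemma --- a nontrivial extra layer your sketch omits. (2) Placing ``names for the topology'' in a transitive $M$ of size $\kappa$ closed under ${<}\kappa$-sequences is problematic, since the topology (and its name) can have size $2^{\aleph_2}>\kappa$; restricting to the $\kappa\cdot\omega_1$-sized pseudobase system keeps $M$ small but then Lindel\"ofness --- which quantifies over \emph{all} open covers --- need not be captured by $M$, which is again why the published argument has to work with indescribability over names rather than a plain size-$\kappa$ elementary-embedding reflection. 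Your closing remark connecting the result to the tree property at $\aleph_2$ is a sensible observation (and resonates with how this paper's Theorem \ref{weak} produces the counterexample from an $\omega_2$-Aronszajn tree when $\omega_2$ is not weakly compact in $\mathbf L$), but it is stated as an aside rather than developed into a proof.
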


We will show that the large cardinal assumptions are necessary in
the above theorems
by proving:

\begin{thm}
\label{inaccBC}

If
$\mathrm{KH}$ holds, then there is a compact $T_2$
indestructible space of weight $\le\aleph_1$ and size $>\aleph_1$.

\end{thm}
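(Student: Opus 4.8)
The plan is to realize the required space as a \emph{Kurepa line}, in direct analogy with the space $2^{\omega_1}_{\mathrm{lex}}$ of Section~\ref{sectionlex}. Using $\mathrm{KH}$, fix a Kurepa tree; after replacing it by a downward closed Hausdorff subtree of $\mbox{}^{<\omega_1}2$ and pruning away the nodes lying on no cofinal branch, I may assume $T\subseteq\mbox{}^{<\omega_1}2$ is Hausdorff, every node of $T$ has extensions to arbitrarily high levels, and $T$ still has a set $[T]$ of at least $\aleph_2$ cofinal branches. Ordering the immediate successors of each node by $0\prec 1$, I let $X$ be the Dedekind completion, with a least and a greatest element adjoined, of $[T]$ under the induced lexicographical ordering; equivalently, $X$ is the closure of $[T]$ inside the compact space $2^{\omega_1}_{\mathrm{lex}}$, so that the points of $X\setminus[T]$ are exactly the one-sided limits of branches.

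The elementary features of $X$ are then read off exactly as in Section~\ref{sectionlex}. Being a Dedekind complete linear order with endpoints, $X$ is compact Hausdorff by the argument of Lemmas~\ref{supinf} and~\ref{compact}. For each $t\in T$ the cone $C_t=\{b\in[T]:t\in b\}$ (and its closure in $X$) is a clopen interval, and the family $\{C_t:t\in T\}$ is a base for $X$; since $T$ is an $\omega_1$-tree, $|T|=\aleph_1$ and hence $w(X)\le\aleph_1$. Finally, Hausdorffness of $T$ guarantees that distinct cofinal branches are distinct points of $X$, so $|X|\ge|[T]|\ge\aleph_2>\aleph_1$. Thus $X$ has all the required cardinal invariants, and everything reduces to proving that $X$ is indestructible.

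For indestructibility I would work entirely in the ground model: by Lemma~\ref{countcompact} and Theorem~\ref{st1} it suffices to show that One has no winning strategy in $\mathsf G_1^{\omega_1}(\mathcal O_X,\mathcal O_X)$. The conceptual key is that, although $X$ has $\aleph_2$ points while the game lasts only $\omega_1$ innings, each single move of Two covers an entire cone $C_{s}$ --- hence \emph{all} of the (typically uncountably many) branches through $s$ --- because the cones form a base. The mechanism distinguishing $X$ from the destructible space $2^{\omega_1}_{\mathrm{lex}}$ (Corollary~\ref{indestr}) is the countability of the levels of $T$: I claim that \emph{every nonempty closed $F\subseteq X$ contains a $G_\delta$ point of $F$}. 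Indeed, as in Lemmas~\ref{aux}, \ref{conc} and~\ref{pichi}, an accumulation point $h$ of a countable subset of $F$ may be taken to satisfy $h=\sup\{f\in F:f\prec h\}$ and to be eventually constant; but now, because $T$ has countable levels and does not contain the long constant extensions of $h\restriction\gamma$, the point $h$ is a \emph{one-sided} limit (isolated from the constant side within $[T]$), so the countable $\pi$-base furnished by Lemma~\ref{pichi} is in fact a genuine countable neighbourhood base --- upgrading "countable $\pi$-character'' to "countable character,'' which is exactly what fails in $2^{\omega_1}_{\mathrm{lex}}$ by Lemma~\ref{noGd}. Given this, I would have Two defeat any strategy $\sigma$ of One by a Cantor--Bendixson-style peeling along the tree: maintaining a decreasing chain of closed "uncovered remainders,'' at each inning Two selects from One's cover an open set containing a point of countable character of the current remainder, thereby removing an open neighbourhood of it, and the countably many gap points are disposed of one per inning.

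The hard part will be two points on which the whole argument rests, both powered by the countable levels of $T$. First, I must genuinely verify the character upgrade of the previous paragraph --- that the eventual-constancy analogue of Lemma~\ref{limit}, together with the absence of long constant branches in $T$, forces the distinguished accumulation point of each closed set to be a true $G_\delta$ point of that set and not merely a point of countable $\pi$-character; this is precisely the dividing line between the present (indestructible) example and the (destructible) space of Section~\ref{sectionlex}, and it is where the hypothesis that the levels are countable is indispensable. Second, I must turn the "$G_\delta$-scattered'' property into a defeat of an arbitrary adaptive strategy $\sigma$ within $\omega_1$ innings: here I would run the Scheepers--Tall elementary-submodel fusion through a continuous chain $\langle N_\xi:\xi\in\omega_1\rangle$ with $\delta_\xi=N_\xi\cap\omega_1$, using that $T\!\restriction\!\delta_\xi\subseteq N_\xi$ (each level being countable) so that at stage $\xi$ only countably many cones must be handled inside $N_\xi$, and invoking Pawlikowski's theorem (Theorem~\ref{pawl}) on the induced length-$\omega$ subgames; the remaining obstacle is to bound the peeling rank by $\omega_1$, i.e.\ to certify that every point of $X$ is caught at some $\delta_\xi$, which should follow because the height of $T$ is $\omega_1$ and each point of $X\setminus[T]$ already lies in a cone $C_{h\restriction\gamma}$ with $\gamma<\omega_1$. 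Once this is in place, Theorem~\ref{st1} yields indestructibility, and with the cardinal computations above the theorem follows; closure-heredity (Lemma~\ref{preserv}) and the failure of Lemma~\ref{lealeph1} for $|X|>\aleph_1$ explain why a direct size argument cannot replace it.
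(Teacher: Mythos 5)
Your construction of the space is essentially the paper's: the lexicographic line $L_T$ over a rooted Hausdorff Kurepa tree, with the same compactness, weight and cardinality computations, so everything indeed reduces to indestructibility --- and that is where the proposal breaks. Your first ``hard part,'' the character upgrade, is false as stated. If $h\in F$ is an accumulation point of a countable $A\subseteq F$ with, say, $h=\sup\{f\in A:f\prec h\}$ (as in Lemma \ref{aux}), then $h$ has countable character from the left, but nothing in the Kurepa-tree structure makes $h$ isolated, or even of countable character, from the right: a cofinal branch of $T$ can perfectly well be a right-limit of other branches whose splitting levels with it are cofinal in $\omega_1$, while simultaneously being the sup of a countable set from the left. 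Countable levels bound the \emph{width} of the tree, not the cofinality with which branches accumulate to a given branch from one side, so your distinguished point is only guaranteed countable $\pi$-character (exactly the conclusion of Lemma \ref{pichi}), not a relative $G_\delta$. (The statement ``every nonempty closed $F\subseteq X$ has a relative $G_\delta$ point'' \emph{is} true for this space, but the proof goes through the absence of an $\omega_1$-\v Cech-Posp\'i\v sil tree --- i.e.\ through the tree-theoretic route below --- not through your one-sidedness claim.)

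The second ``hard part'' is a larger gap: even granting $G_\delta$-scatteredness, the passage from it to ``One has no winning strategy in $\mathsf G_1^{\omega_1}(\mathcal O_X,\mathcal O_X)$'' is not a known implication; via Proposition \ref{cptree} and Corollary \ref{compactT2} it amounts to the nontrivial direction of Problem \ref{bindestr}, which the paper poses as \emph{open}. Your sketched fusion would, if it worked at this level of generality, answer that problem; moreover the closed remainders in your peeling are not scattered, hence not Rothberger, so Pawlikowski's theorem does not apply to the induced length-$\omega$ subgames, and nothing in the sketch shows the peeling terminates within $\omega_1$ innings against an adaptive strategy. The paper's actual mechanism is entirely forcing-theoretic and avoids the game: by Lemma \ref{equiv}, $L_T$ is destructible iff some countably closed forcing adds a new Dedekind cut, iff it adds a new branch to $T$, iff $T$ has a subtree isomorphic to $\langle\mbox{}^{<\omega_1}2,\subseteq\rangle$; Silver's Lemma \ref{kurepano2} --- and this is where the countable levels of the Kurepa tree genuinely enter --- rules out such a subtree in any $\omega_1$-tree, and indestructibility follows at once. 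Your instinct that countable levels are the engine is correct, but they act through Silver's lemma on binary subtrees, not through a pointwise character computation plus fusion.
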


\begin{thm}
\label{weak}

If $\omega_2$ is not weakly compact in
$\mathbf L$, then there is a Lindel\"of $T_3$ indestructible space of
pseudocharacter $\le\aleph_1$ and size $\aleph_2$.

\end{thm}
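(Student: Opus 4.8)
The plan is to extract a suitable tree from the hypothesis and then feed it into a construction in the spirit of the space $2^{\omega_1}_{\mathrm{lex}}$ of Section \ref{sectionlex} and of Theorem \ref{inaccBC}, but tuned so that the resulting space has size exactly $\aleph_2$ and character $\le\aleph_1$ at every point. Write $\kappa=\omega_2$ computed in $V$. Since $\mathbf L\subseteq V$, $\kappa$ is a cardinal of $\mathbf L$ and, being regular in $V$, is regular in $\mathbf L$. By hypothesis $\kappa$ is not weakly compact in $\mathbf L$; using $\mathbf L\models\mathrm{GCH}+\square$ I would argue that $\mathbf L$ then contains a $\kappa$-Aronszajn tree $T$. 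Indeed, either $\kappa$ is inaccessible in $\mathbf L$, in which case the failure of weak compactness is exactly the failure of the tree property; or $\kappa$ is not inaccessible in $\mathbf L$, in which case (as $\mathrm{GCH}$ turns regular limits into inaccessibles) it is a successor cardinal of $\mathbf L$ and carries a special $\kappa$-Aronszajn tree by the usual Specker/$\square$ construction. In either case I would take $T$ \emph{special}, i.e. a union of fewer than $\kappa$ antichains; the crucial point is that a specializing (regressive) function is a property of comparable pairs of nodes and is therefore absolute between $\mathbf L$, $V$, and all forcing extensions, so $T$ has no chain of length $\kappa$ in any such model. In $V$ the levels of $T$ have cardinality $\le\aleph_1$, whence $|T|=\aleph_2$.

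Next comes the construction, following Todor\v cevi\'c's passage from trees to lines (the techniques of \cite{stevoh}). I would fix a linear order on the immediate successors of each node of $T$ and form the associated lexicographic line $L$ on (a doubling of) the nodes of $T$, arranging $|L|=\aleph_2$. The features to verify are: $(i)$ $L$ is a $\kappa$-Aronszajn line, i.e. it has no strictly monotone $\omega_2$-sequence --- this is exactly where the absence of $\kappa$-chains in $T$ enters; and $(ii)$ every well-ordered or reverse-well-ordered subset of $L$ has length $<\omega_2$. Passing to the Dedekind completion $X$ of $L$ with endpoints adjoined, $(ii)$ guarantees that every point of $X$ is approached from each side with cofinality $\le\aleph_1$, so $\psi(x,X)=\chi(x,X)\le\aleph_1$ for all $x\in X$; and the completion adds at most $\aleph_2$ gap-points, so $|X|=\aleph_2$. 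Being a complete linearly ordered space with endpoints, $X$ is compact Hausdorff, hence Lindel\"of and $T_3$. This is the routine part, modulo checking the cardinality bounds and that no gap of $L$ has cofinality $\omega_2$.

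Finally, indestructibility, which I expect to be the main obstacle. By Theorem \ref{st1} it suffices to show that $X$ stays Lindel\"of after every countably closed forcing; equivalently, that One has no winning strategy in $\mathsf G_1^{\omega_1}(\mathcal O_X,\mathcal O_X)$. A compact linearly ordered space can lose Lindel\"ofness only through the appearance of a new cut with no least upper bound whose lower part has uncountable cofinality, i.e. a new strictly monotone $\omega_1$- or $\omega_2$-sequence realizing a gap (such a configuration is precisely what would furnish One with a winning strategy of \v Cech--Pospi\v sil type, cf. Proposition \ref{cptree}). Countably closed forcing adds no new $\omega$-sequences, so no new gap of countable cofinality arises; and the absoluteness of the specializing function of $T$ rules out any new $\omega_2$-chain, hence via $(i)$ any new monotone $\omega_2$-sequence of $L$. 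The delicate case is a new cofinal branch through an initial part $T\!\restriction\!\alpha$ of uncountable cofinality that has no upper bound in $T$, which a countably closed forcing could a priori add and which would manifest as a new $\omega_1$-cofinal gap of $X$. The heart of the argument is therefore to show that the specific special (coherent) tree produced in the first step admits no such new branch --- equivalently, that every cut of $X$ of uncountable cofinality is already filled in an absolute way. Granting this, $X$ remains Lindel\"of in every countably closed extension and is thus indestructible.

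This mirrors Theorem \ref{inaccBC}, with the Aronszajn tree here playing the role there taken by the Kurepa tree. I should note that the direct route through Theorem \ref{jw} is not available: an Aronszajn line is dense-in-itself, so $X$ is not scattered, and I expect the absoluteness-of-specialness analysis above --- rather than Juh\'asz--Weiss or the size bound of Lemma \ref{lealeph1} --- to be what is genuinely needed.
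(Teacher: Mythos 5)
Your architecture (a tree extracted from the failure of weak compactness in $\mathbf L$, Todor\v cevi\'c's tree-to-line passage, indestructibility read off from the absence of new gaps) is the right shape, and indeed matches the paper's; but the two places you mark as ``routine'' or ``granting this'' are where the entire proof lives, and your proposed substitute --- specialness --- cannot fill them. First, the extraction step fails in the inaccessible case: if $\omega_2$ is inaccessible in $\mathbf L$, no $\kappa$-tree is special in your sense, since a union of $\lambda<\kappa$ antichains has all chains of size $\le\lambda$, while a $\kappa$-tree has chains of every length $<\kappa$; and a tree that is merely Aronszajn \emph{in $\mathbf L$} can perfectly well acquire cofinal branches in $V$, so you never actually obtain a tree with the needed properties in $V$. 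The paper's solution is Theorem \ref{jk} (Jensen--K\"onig): the hypothesis yields, \emph{in $V$}, a coherent sequence $\mathcal F$ whose tree $T(\mathcal F)$ is $\omega_2$-Aronszajn in $V$. It is coherence, not specialness, that drives everything afterwards.

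Second, the ``delicate case'' you concede --- that countably closed forcing adds no new branch of cofinality $\omega_1$ --- is the heart of the theorem, and absoluteness of a specializing function is powerless against it: specialness is compatible with $T$ containing a copy of $\langle{}^{<\omega_1}2,\subseteq\rangle$ (e.g., under $\mathrm{CH}$ one may graft ${}^{<\omega_1}2$, itself a union of its $\aleph_1$ levels, onto a special $\omega_2$-Aronszajn tree), and by Lemma \ref{equiv} such a copy makes $L_T$ \emph{destructible}. The paper closes exactly this gap by the equivalence of Lemma \ref{equiv} --- destructibility of $L_T$ is equivalent to the ground-model combinatorial statement that $T$ contains a copy of $\langle{}^{<\omega_1}2,\subseteq\rangle$, the implication $(c)\rightarrow(d)$ coming from Todor\v cevi\'c's Lemma 4.3 argument --- together with Lemma \ref{no2w1}, a Silver-style counting showing the coherent tree admits no such copy: under $\mathrm{CH}$ the $\delta$-th level $\{f:f=^*f_\delta\}$ has size $\aleph_1$, whereas a binary copy would inject $2^{\aleph_1}$ points into it, with Lemma \ref{cfw} (no branch of cofinality $\omega_1$, again from coherence) needed to keep the unions $g_h$ inside $T(\mathcal F)$. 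Note finally that $\mathrm{CH}$ enters essentially and your cardinality claim conceals this: without $\mathrm{CH}$ the bound $|L_T|\le\aleph_2^{\aleph_0}$ (via countable cofinality of branches, Lemma \ref{cfw}) gives nothing, and your assertion that the completion adds at most $\aleph_2$ points is unjustified. The paper therefore splits the proof: if $\mathrm{CH}$ fails, any $X\subseteq\mathbb R$ of size $\aleph_2$ is hereditarily Lindel\"of, hence indestructible with countable pseudocharacter, and the tree construction (Proposition \ref{weak0}) is invoked only under $\mathrm{CH}$.
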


Both of these results were obtained independently, around the same
time as we did, by T. Usuba. We are very thankful to
Stevo Todor\v cevi\'c for bringing to our attention that the
construction found in \cite{stevoh} would be useful for
the topic we discuss in this section.

We point out that indestructibility in Theorem \ref{weak} is relevant
in view of the fact that the existence of an indestructible
counterexample to the conclusion of Theorem \ref{psi} is shown in
\cite{usuba} by adding $\aleph_3$ Cohen subsets of $\omega_1$ under
the hypothesis that $\omega_2$ is not weakly compact in $\mathbf{L}$.

The following construction is taken from Section 8 of Todor\v
cevi\'c's article \cite{stevoh}.
Let $T$ be a rooted Hausdorff tree such that all the levels and
branches of $T$ have size $\le\aleph_1$.
We may assume that $T$ is an initial part of
$\langle\mbox{}^{<\omega_2}(\omega_1+1),\subseteq\rangle$ satisfying
the following condition:
$$
(*)\textrm{ for each }t\in T\textrm{, the set
}\{\xi\le\omega_1:t^\smallfrown(\xi)\in T\}\textrm{ is a successor ordinal.}
$$
Let $\prec$ be the ordering on the set
$L_T=\{\bigcup B:B$
is a branch of $T\}$ naturally induced by the lexicographical ordering
of the branches of $T$, and regard $L_T$ as a linearly ordered
topological space. The argument present in Lemma 8.1(i) of
\cite{stevoh} applies in this case to show that $w(L_T)\le|T|$.
For each $t\in T$, let $L_T(t)=\{s\in L_T:t\subseteq s\}$.

\begin{lemma}[see \cite{stevoh}]
\label{compl}

$L_T(t)$ is a compact subspace of $L_T$ for all $t\in T$.

\end{lemma}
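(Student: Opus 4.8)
The plan is to treat $L_T(t)$ as a linearly ordered topological space in its own right and show that it is order-complete with a least and a greatest element; by exactly the reasoning that passes from Lemma \ref{supinf} to Lemma \ref{compact}, this gives compactness. The one extra point to dispatch is that $L_T(t)$ is \emph{convex} in $L_T$, so that its subspace topology agrees with its order topology and the above reasoning applies.

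First I would produce the endpoints and establish convexity. Condition $(*)$ says that for every $u\in T$ the index set $\{\xi\le\omega_1:u^\smallfrown(\xi)\in T\}$ is a nonzero successor ordinal $\delta_u+1$; in particular it contains both its minimum $0$ and its maximum $\delta_u$. Extending $t$ by repeatedly appending the coordinate $0$ (taking unions at limit levels) builds a chain which, since branches of $T$ have size $\le\aleph_1$ and $(*)$ forbids nodes without successors, is a branch whose union $\ell_t\in L_T$; by construction $\ell_t$ is the $\prec$-least element of $L_T(t)$. Appending the coordinate $\delta_u$ at each step symmetrically yields the $\prec$-greatest element $r_t$. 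For convexity I would check that if $s\in L_T$ and $\ell_t\preceq s\preceq r_t$ then $t\subseteq s$: $s$ cannot diverge from $t$ below level $\mathrm{ht}_T(t)$, since a left divergence gives $s\prec\ell_t$ and a right divergence $s\succ r_t$, and $s$ cannot be a proper initial segment of $t$ because no branch-union is a proper initial segment of a node of $T$ (every node has a successor, so such a chain would not be maximal). Hence $L_T(t)=[\ell_t,r_t]$ is convex.

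The heart of the argument is order-completeness: every nonempty $A\subseteq L_T(t)$ has a supremum (and, symmetrically, an infimum) in $L_T(t)$. I would build $\sup A$ by recursion on levels. Given an initial segment $u_\gamma$ extending $t$, put $A_\gamma=\{a\in A:u_\gamma\subseteq a\}$; if every $a\in A_\gamma$ equals $u_\gamma$ then $u_\gamma\in A$ is the maximum and we stop, and otherwise set $u_{\gamma+1}=u_\gamma^\smallfrown(\mu_\gamma)$ where $\mu_\gamma=\sup\{a(\gamma):a\in A_\gamma,\ \gamma<\mathrm{dom}(a)\}$, taking unions at limit levels. The resulting branch-union $s^*$ is the least upper bound: any $a\in A$ that ever leaves the segment $u_\gamma$ does so on the left (we always chose the largest occurring coordinate), so $a\preceq s^*$; and if $s'\prec s^*$ then at the level where $s'$ diverges some $a\in A_\gamma$ has a strictly larger coordinate, so $s'$ is not an upper bound. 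Since every element of $A$ agrees with $t$ below level $\mathrm{ht}_T(t)$, so does $s^*$, whence $s^*\in L_T(t)$. The infimum is obtained by the mirror-image recursion, taking the \emph{minimum} coordinate at each level, which is automatically attained by well-ordering.

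The main obstacle — and the point at which $(*)$ is indispensable — is verifying that the supremum recursion stays inside $T$, i.e.\ that $u_{\gamma+1}\in T$, which amounts to $\mu_\gamma\in\{\xi\le\omega_1:u_\gamma^\smallfrown(\xi)\in T\}$. Each $a\in A_\gamma$ satisfies $u_\gamma^\smallfrown(a(\gamma))=a\upharpoonright(\gamma+1)\in T$, so the coordinates $a(\gamma)$ all lie in $\delta_{u_\gamma}+1$; because this index set is a \emph{successor} ordinal it contains the supremum of any of its subsets, so $\mu_\gamma\le\delta_{u_\gamma}$ and $u_{\gamma+1}\in T$. Were the index set merely a limit ordinal, $\mu_\gamma$ could equal that limit and escape $T$, and the supremum would fail to exist; this is exactly what $(*)$ prevents. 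With order-completeness and the endpoints $\ell_t,r_t$ in hand, the order topology on $L_T(t)$ is compact, and convexity identifies it with the subspace topology, completing the proof.
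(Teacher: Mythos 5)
Your supremum construction is in substance the paper's own proof. The paper pads each $s\in L_T$ with zeros on all of $\omega_2$ (the function $\tilde s$), defines $f(\alpha)=\sup\{\tilde s(\alpha):s\in A,\ s\upharpoonright\alpha=f\upharpoonright\alpha\}$ by the same coordinatewise recursion, and truncates at the least $\beta_0$ with $f\upharpoonright\beta_0\notin T$; the zero-padding is exactly your convention $\sup\emptyset=0$, which silently handles the case where the surviving set $A_\gamma$ dies out (because $\mu_\gamma$ is not attained at a successor step, or at a limit level) by steering the recursion onto the leftmost continuation --- which is indeed the least upper bound in that case. Your use of $(*)$ --- a successor ordinal contains the supremum of each of its subsets, so $u_{\gamma+1}\in T$ --- is precisely the paper's Case 2. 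Your treatment of $L_T(t)$ for $t\neq\emptyset$ via the endpoints $\ell_t,r_t$ and convexity (so that subspace and order topologies agree) is a correct and more explicit version of the paper's ``the general case is analogous''.

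The one genuine flaw is in the infimum half. The mirror recursion is \emph{not} ``take the minimum coordinate at each level''. Well-ordering does guarantee attainment at successor steps, so $A_{\gamma+1}\neq\emptyset$ there; but at a limit stage $\lambda$ the surviving set $A_\lambda=\{a\in A:u_\lambda\subseteq a\}$ can be empty even though every $A_\gamma$, $\gamma<\lambda$, was nonempty: take $a_n\in A$ following the minimal path up to level $\gamma_n$ and then branching off to the right, with $\gamma_n$ cofinal in $\lambda$. At that point ``the minimum coordinate'' is a minimum over the empty set, and --- unlike the sup case --- continuing leftmost gives the wrong answer: every element of $L_T(u_\lambda)$ is a lower bound of $A$, so $\inf A$ is the \emph{rightmost} element $r_{u_\lambda}$ of $L_T(u_\lambda)$. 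The true mirror of ``pad with $0$'' is ``pad with the maximal coordinates $\delta_u$'', not ``take minima''. The slip costs nothing in the end, because the infimum construction is unnecessary: once every subset of $L_T(t)$ has a supremum in $L_T(t)$ (including $\sup\emptyset=\ell_t$, the least element), infima come for free from $\inf A=\sup\{s\in L_T(t):s\text{ is a lower bound of }A\}$ --- a sup-complete linear order is inf-complete. This is also why the paper phrases completeness in terms of Dedekind cuts and least upper bounds only and never constructs an infimum; I recommend you do the same rather than repair the mirror recursion.
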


\begin{proof}

We will prove that $L_T=L_T(\emptyset)$ is compact; the
general case is analogous. Since $L_T$ is a linearly ordered space,
compactness of $L_T$ is equivalent to completeness of the linear
ordering of $L_T$.

Let $A\subseteq L_T$ be arbitrary. For each $s\in L_T$, let $\tilde
s=s\cup\{\langle\xi,0\rangle:\xi\in\omega_2\setminus\mathrm{dom}(s)\}\in\mbox{}^{\omega_2}(\omega_1+1)$.
Now define $f\in\mbox{}^{\omega_2}(\omega_1+1)$ recursively by
$$
f(\alpha)=\sup\{\tilde s(\alpha):s\in A\textrm{ and
}s\upharpoonright\alpha=f\upharpoonright\alpha\}
$$
for all $\alpha\in\omega_2$. There must be some $\beta\in\omega_2$
with $f\upharpoonright\beta\notin T$, since otherwise
$\{f\upharpoonright\beta:\beta\in\omega_2\}$ would be a branch of
cardinality $\aleph_2$ in $T$. Let then $\beta_0$ be the least such
$\beta$, and define $u=f\upharpoonright\beta_0$.

\emph{Case 1.} $\beta_0$ is a limit ordinal.
It follows from the minimality of $\beta_0$ that
$\{u\upharpoonright\xi:\xi\in\beta_0\}\subseteq T$; therefore, as
$u\notin T$ and $\beta_0$ is a limit ordinal, we have that
$\{u\upharpoonright\xi:\xi\in\beta_0\}$ is a branch of $T$. Hence
$u=\bigcup\{u\upharpoonright\xi:\xi\in\beta_0\}\in L_T$. Note that
$f=\tilde u$, by the construction of $f$. It follows that $\sup A=u\in
L_T$.

\emph{Case 2.} $\beta_0$ is a successor ordinal.
Let $\alpha\in\omega_2$ be such that $\beta_0=\alpha+1$, and define
$v=u\upharpoonright\alpha\in T$. Since $u\notin T$ and $T$ satisfies
$(*)$, it follows from the construction of $f$ that
$u(\alpha)=0$. Thus $\{v\upharpoonright\xi:\xi\le\alpha\}$ is a
branch of $T$ (note that, by $(*)$, if $v^\smallfrown(0)\notin T$ then
$v$ has no extension in $T$), whence
$v=\bigcup\{v\upharpoonright\xi:\xi\le\alpha\}\in L_T$. As in the
previous case, $f=\tilde v$ and therefore $\sup A=v\in L_T$.
\end{proof}

\begin{lemma}
\label{pseudo}

$\psi(L_T)\le\aleph_1$.

\end{lemma}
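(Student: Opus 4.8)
The plan is to show that every point of $L_T$ has pseudocharacter at most $\aleph_1$, i.e. that each singleton $\{s\}$ is the intersection of $\le\aleph_1$ open sets. Since $L_T$ is a linearly ordered (hence $T_1$) space, it suffices to exhibit, for each $s\in L_T$, a family of at most $\aleph_1$ open intervals whose intersection is $\{s\}$. The natural candidates are intervals of the form $(a,b)$ with $a\prec s\prec b$, together with one-sided rays if $s$ happens to be an endpoint. The whole difficulty is to produce \emph{countably many, or at least $\le\aleph_1$ many}, such intervals that pin down $s$ exactly, rather than the full (possibly size-$2^{\aleph_1}$) neighbourhood filter.

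**First I would** fix $s\in L_T$ and let $B$ be the branch of $T$ with $\bigcup B=s$. The key structural fact, coming from condition $(*)$ and the definition of the ordering $\prec$, is that the topology of $L_T$ is controlled by the tree $T$, whose levels and branches all have size $\le\aleph_1$. For each node $t\in B$ the immediate successors $S(t)$ form a successor ordinal's worth of elements (by $(*)$), each determining a subinterval; the branch $s$ is separated from any other point $s'$ at the level $\mathrm{ht}_T(t)$ where they first split. I would therefore attempt to build a separating family indexed by $B$ (or by $B$ together with the successor sets $S(t)$): for each $t\in B$, use the ordering $\prec_t$ on $S(t)$ to choose one interval-endpoint immediately $\prec$ and one immediately $\succ$ the successor of $t$ lying in $B$, giving a neighbourhood of $s$ that excludes everything branching off at or below level $\mathrm{ht}_T(t)$ on the wrong side. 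Since $|B|\le\aleph_1$ and each $S(t)$ contributes only finitely much relevant data per stage, the total number of intervals used is $\le\aleph_1$.

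**The main obstacle** I anticipate is the limit-level behaviour: a point $s'\ne s$ might agree with $s$ along an entire initial segment and only diverge at a limit height, or might fail to be cut off because the relevant "immediate neighbour" on the tree does not exist (e.g. when a successor set in $(*)$ has no immediate predecessor or the interval degenerates). Here I would lean on Lemma \ref{sucpred}'s analogue for $L_T$ --- the combinatorial description of when an open interval is empty --- to verify that the chosen intervals genuinely shrink down to the single branch $B$, and on the Hausdorffness of $T$ to guarantee that distinct branches really are separated at \emph{some} level. Concretely, for any $s'\ne s$ with first splitting node $t$ at level $\alpha=\mathrm{ht}_T(t)$, I must exhibit one interval in my family, associated to $t\in B$, that contains $s$ but not $s'$; the successor structure from $(*)$ is exactly what makes this neighbour available.

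**Finally**, taking the union over all $s\in L_T$ of these families and bounding, one concludes $\psi(s,L_T)\le\aleph_1$ for every $s$, whence $\psi(L_T)=\sup_s\psi(s,L_T)\le\aleph_1$, which is the assertion of the lemma. I expect the argument to parallel closely the treatment of $G_\delta$ points in Lemmas \ref{sucpred}--\ref{noGd} for the space $2^{\omega_1}_{\mathrm{lex}}$, with $\aleph_0$ replaced by $\aleph_1$ throughout and the tree $T$ playing the role that $\mbox{}^{<\omega_1}2$ played there; the extra room afforded by $\aleph_1$ (as opposed to wanting points to be $G_\delta$) is what makes the bound attainable despite branches having size up to $\aleph_1$.
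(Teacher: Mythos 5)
Your overall strategy --- pinning $s$ down by $\le\aleph_1$ many intervals whose endpoints are manufactured from the branch $B$ with $\bigcup B=s$, using that $|B|\le\aleph_1$ and that each successor set $S(t)$ has size $\le\aleph_1$ --- is the right one, and in substance it is what the paper does (the paper works one side at a time: it finds $A\subseteq\{s'\in L_T:s'\prec s\}$ with $|A|\le\aleph_1$ whose supremum $a$ satisfies $(a,s)=\emptyset$, splitting into two cases according to whether the set of levels of agreement with $s$ has a maximum). But there is a genuine flaw in your separating mechanism. You propose, at each $t\in B$, to take \emph{one} endpoint ``immediately $\prec$'' the successor of $t$ lying in $B$, asserting that ``each $S(t)$ contributes only finitely much relevant data per stage.'' By condition $(*)$ the successors of $t$ are $t^\smallfrown(\xi)$ with $\xi$ ranging over a successor ordinal $\le\omega_1+1$; if the branch goes through $t^\smallfrown(\eta)$ with $\eta$ a \emph{limit} ordinal, then no immediate $\prec_t$-predecessor exists, and no single interval associated to $t$ can contain $s$ while excluding all of $\bigcup_{\xi<\eta}L_T(t^\smallfrown(\xi))$ --- indeed $s$ may itself be the supremum of that union (e.g.\ when all $t^\smallfrown(\xi)$, $\xi\le\eta$, are leaves). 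You half-anticipate this (``the relevant immediate neighbour\dots does not exist''), but the remedy you suggest --- an analogue of Lemma \ref{sucpred} plus Hausdorffness of $T$ --- cannot supply the missing neighbour, because it genuinely is not there; a characterization of empty intervals does not create cut points.

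The correct repair, which is exactly the paper's Case 2, is to abandon ``one interval per node'' in favour of up to $\aleph_1$ cut points at a single node: for each $\xi<\eta$ take $r_\xi=\sup L_T(t^\smallfrown(\xi))$, a point of $L_T(t^\smallfrown(\xi))$ by the compactness from Lemma \ref{compl}, so that any $s'$ splitting off from $s$ at $t$ on the left satisfies $s'\preceq r_\xi\prec s$ for the appropriate $\xi$. The count is then $\sum_{t\in B}|S(t)|\le\aleph_1\cdot\aleph_1=\aleph_1$, so your final bound survives; note that per-stage \emph{finiteness} is both false and unnecessary. Your one-endpoint-per-node scheme does handle the situation where points below $s$ agree with $s$ cofinally in $\mathrm{dom}(s)$ (the paper's Case 1, where $|A|\le|\mathrm{dom}(s)|\le\aleph_1$ because branches have size $\le\aleph_1$); the failure is concentrated precisely at a single node with limit-indexed branch choice, which is where the paper's Case 2 family $\{r_\eta:\eta\in\theta\}$, of size up to $\aleph_1$, does the work. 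With this fix (plus the symmetric construction to the right of $s$, converting tree nodes into points of $L_T$ via suprema and infima as above), your node-by-node scheme becomes a reorganized version of the paper's proof rather than a different argument.
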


\begin{proof}

Fix an arbitrary $s\in L_T$. It suffices to show that there is
$A\subseteq\{s'\in L_T:s'\prec s\}$ such that $|A|\le\aleph_1$ and
$(a,s)=\emptyset$, where $a=\sup A\in L_T$; a similar argument will
show that the analogous condition holds also to the right of $s$.

Let $S=\{\xi\in\mathrm{dom}(s):\exists s'\in L_T\;(s'\prec s$ and
$s'\upharpoonright\xi=s\upharpoonright\xi)\}$.

\emph{Case 1.} $S$ does not have a greatest element.
For each $\xi\in S$, pick $r_\xi\in L_T$ with $r_\xi\prec s$ and
$r_\xi\upharpoonright\xi=s\upharpoonright\xi$. It is clear that, if
$s'\in L_T$ is such that $s'\prec s$, then $s'\prec r_\xi$ for some
$\xi\in S$. Thus we can take $A=\{r_\xi:\xi\in S\}$.

\emph{Case 2.} There is $\beta=\max S$. Since
$\{s\upharpoonright\alpha:\alpha\in\mathrm{dom}(s)\}$ is a branch of
$T$, we must have that $\beta+1\in\mathrm{dom}(s)$; let then
$\theta=s(\beta+1)$. For each $\eta\in\theta$, let $r_\eta=\sup\{s'\in
L_T:s'\upharpoonright\beta=s\upharpoonright\beta$ and
$s'(\beta+1)=\eta\}\prec s$. Then $A=\{r_\eta:\eta\in\theta\}$
is as required.
\end{proof}

The next result will be the main tool for obtaining indestructibility
of our exam\-ples.

\begin{lemma}
\label{equiv}

The following conditions are equivalent:

\begin{itemize}

\item[$(a)$]
$L_T$ is destructible;

\item[$(b)$]
there is a countably closed forcing that adds a new Dedekind cut in
$\langle L_T,\prec\rangle$;

\item[$(c)$]
there is a countably closed forcing that adds a new branch
in $T$;

\item[$(d)$]
$T$ has a subtree isomorphic to $\langle\mbox{}^{<\omega_1}2,\subseteq\rangle$;

\item[$(e)$]
there is an $\omega_1$-\v Cech-Posp\'i\v sil tree in $L_T$.

\end{itemize}

\end{lemma}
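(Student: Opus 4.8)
The plan is to prove Lemma~\ref{equiv} by establishing a cycle of implications
among the five conditions, taking advantage of the tight correspondence between the
linearly ordered space $L_T$, its Dedekind cuts, and the branches of the tree $T$.
I would organize the argument as the cycle
$(a)\rightarrow(b)\rightarrow(c)\rightarrow(d)\rightarrow(e)\rightarrow(a)$,
so that each implication can be handled by a single focused idea.

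\medskip
First I would show $(a)\rightarrow(b)$. By Lemma~\ref{countcompact} and
Lemma~\ref{compl}, $L_T$ is compact, so destructibility means that some
countably closed forcing destroys compactness of $\langle L_T,\prec\rangle$.
Since $L_T$ is a linearly ordered space, compactness is equivalent to
completeness of the ordering (as used in the proof of Lemma~\ref{compl});
thus a forcing that destroys compactness must introduce a Dedekind cut with no
least upper bound. The subtlety here is that the \emph{underlying set}
$L_T$ does not change under forcing, so I must argue that the failure of
completeness in the extension is witnessed by a cut that is genuinely
\emph{new}, i.e.\ one that was already a complete cut (had a supremum) in the
ground model but loses its supremum---equivalently, a cut not realized by any
ground-model point; this is precisely what ``adds a new Dedekind cut'' should
be taken to mean, and I expect this bookkeeping to be the most delicate point
of the whole lemma.

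\medskip
For $(b)\rightarrow(c)$, I would use the identification of points of $L_T$ with
branches of $T$ via $s\mapsto\{s\upharpoonright\xi:\xi\in\mathrm{dom}(s)\}$:
a new Dedekind cut in $\langle L_T,\prec\rangle$ with no supremum in the ground
model determines, by running the recursive ``$\sup$'' construction from the
proof of Lemma~\ref{compl} inside the extension, a function
$f\in\mbox{}^{\omega_2}(\omega_1+1)$ whose proper initial segments all lie in
$T$ but which is not itself (the $\tilde{}$-completion of) a ground-model
branch; the associated chain $\{f\upharpoonright\xi\}$ is then a new branch.
For $(c)\rightarrow(d)$, I would argue contrapositively in the spirit of the
standard fact that a tree with no $\langle\mbox{}^{<\omega_1}2,\subseteq\rangle$
subtree---equivalently, a tree that is a union of $\le\aleph_1$-many branches
with no point having $\aleph_1$-many incomparable extensions below cofinally
many levels---cannot acquire a new branch under countably closed forcing,
since a perfect binary subtree is exactly the configuration that a countably
closed (hence $\omega_1$-branch-preserving on thin trees but
$2^{<\omega_1}$-branch-adding on the full binary tree) forcing can exploit to
generate a cofinal path. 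The implication $(d)\rightarrow(e)$ is direct:
given a copy of $\mbox{}^{<\omega_1}2$ inside $T$, the closed sets
$F_s=\overline{\{t\in L_T : \text{$t$ extends the node indexed by }s\}}$, built
from the $L_T(t)$ of Lemma~\ref{compl}, form an $\omega_1$-\v Cech--Posp\'i\v sil
tree in $L_T$ once one checks conditions $(i)$--$(iv)$, the disjointness in
$(iii)$ coming from the Hausdorffness of $T$ and the lexicographic separation of
incompatible nodes.

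\medskip
Finally $(e)\rightarrow(a)$ is immediate from the machinery already in the
paper: by Proposition~\ref{cptree} an $\omega_1$-\v Cech--Posp\'i\v sil tree in
$L_T$ gives One a winning strategy in
$\mathsf G_1^{\omega_1}(\mathcal O_{L_T},\mathcal O_{L_T})$, and then
Theorem~\ref{st1} (together with Lemma~\ref{countcompact}, since $L_T$ is
compact) yields that $L_T$ is destructible. The main obstacle I anticipate is
making the ``new'' in conditions $(b)$ and $(c)$ precise and coherent across
the extension---ensuring that a cut or branch counts as new exactly when it
witnesses the loss of compactness/completeness---and verifying that the
recursive supremum construction of Lemma~\ref{compl} behaves correctly when
carried out in the forcing extension rather than the ground model.
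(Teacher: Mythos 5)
Your cycle $(a)\rightarrow(b)\rightarrow(c)\rightarrow(d)\rightarrow(e)\rightarrow(a)$ matches the paper's architecture, and three of the five steps are essentially the paper's: $(b)\rightarrow(c)$ by running a recursion in the extension (the paper uses the splitting nodes of the cut rather than the sup-recursion of Lemma \ref{compl}, but the idea is the same, though you would still need the paper's case analysis showing the resulting chain is \emph{maximal} --- otherwise its union is a node of $T$, hence in $\mathbf M$); $(d)\rightarrow(e)$ via $F_p=L_T(t_p)$ (where, like the paper, you should take intersections at countable limit levels to secure clause $(iv)$ of Definition \ref{kunen}); and $(e)\rightarrow(a)$ via Proposition \ref{cptree} and Theorem \ref{st1}. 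One correction to $(a)\rightarrow(b)$: a ground-model cut cannot ``lose its supremum,'' since neither the set $L_T$ nor the order $\prec$ changes under forcing, so ``$x=\sup A$'' is absolute for fixed $A$. The paper's observation runs the other way: any cut in the extension that \emph{has} a supremum $x$ equals $\{s\in L_T:s\prec x\}$ or $\{s\in L_T:s\preceq x\}$ and hence was already in $\mathbf M$; thus ``cut with no supremum'' and ``new cut'' coincide. Your second gloss (``a cut not realized by any ground-model point'') is the right one; your first formulation would fail.

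The genuine gap is $(c)\rightarrow(d)$, the only implication carrying real combinatorial content beyond the machinery of Section 3, and you give no argument for it. Your parenthetical ``equivalently'' is false: having no subtree isomorphic to $\mbox{}^{<\omega_1}2$ is not equivalent to being a union of $\le\aleph_1$ branches --- under $\mathrm{CH}$, the tree $T(\mathcal F)$ of Section 5 has no such subtree (Lemma \ref{no2w1}) yet has $\aleph_2$ nodes while every branch has size $\le\aleph_1$, so no $\aleph_1$ branches cover it. Worse, your justification is circular: you assert that a tree without a perfect binary subtree cannot acquire a new branch ``since a perfect binary subtree is exactly the configuration that a countably closed forcing can exploit'' --- but that such a subtree is \emph{necessary} for branch-adding is precisely the implication to be proved, not a reason for it. What is needed (and what the paper imports from the proof of Lemma 4.3 of \cite{stevotree}, going back to Silver) is a fusion construction: given a countably closed $\mathbb P$ and a name $\dot b$ for a new branch, build by recursion on $s\in\mbox{}^{<\omega_1}2$ conditions $p_s$ and nodes $t_s\in T$ with $p_s\Vdash t_s\in\dot b$, with $p_{s'}\le p_s$ and $t_s\subsetneq t_{s'}$ whenever $s\subseteq s'$, and with $t_{s^\smallfrown(0)}$ and $t_{s^\smallfrown(1)}$ incomparable. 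At successor stages, the assumption that $\dot b$ is forced to be new yields, below any condition, two extensions deciding incomparable nodes of $\dot b$ (else the branch would be definable from a ground-model set); at countable limits, countable closure supplies a lower bound for the chain of conditions, which one then extends to decide a further node. The map $s\mapsto t_s$ is then the required embedding of $\mbox{}^{<\omega_1}2$ into $T$. Without this construction (or an explicit citation standing in for it, as in the paper), your proof of the lemma is incomplete at its crucial step.
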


\begin{proof}

Since $L_T$ is a linearly ordered compact space,
it fails to be compact in a generic extension by a countably closed
partial order if and only if this partial order adds in $L_T$ a new
Dedekind cut with no least upper bound. Thus the equivalence between
$(a)$ and $(b)$ follows from the observation that every Dedekind cut
in the extension that has a least upper bound was already in the
ground model.

For $(b)\rightarrow(c)$, let $A\subseteq L_T$ be a Dedekind cut
in a countably closed forcing extension, and suppose $A\notin\mathbf M$.
Let $\alpha$ be the least ordinal $\le\omega_2$ such that there is no
$t_\alpha\in T$ satisfying $\mathrm{dom}(t_\alpha)=\alpha$, $\{a\in
A:t_\alpha\subseteq a\}\neq\emptyset$ and $\{b\in L_T\setminus
A:t_\alpha\subseteq b\}\neq\emptyset$.
Note that $\{t_\beta:\beta\in\alpha\}$ is a chain in $T$; let then
$C=\{t_\beta:\beta\in\alpha\}$ and $u=\bigcup C$. We claim that
$u\notin\mathbf M$, which implies $C\notin\mathbf M$.

Suppose, to the contrary, that $u\in\mathbf M$.

\emph{Case 1.} $\alpha$ is a successor ordinal.
Then there is $\eta\in\omega_1+1$ such that $\{\xi\in\omega_1:\exists
a\in A\;(u\mbox{}^\smallfrown(\xi)\subseteq a)\}=\eta$ and
$u\mbox{}^\smallfrown(\eta)\subseteq b$ for some $b\in
L_T\setminus A$, hence $A=\{s\in L_T:s\prec
u\mbox{}^\smallfrown(\eta)\}\in\mathbf M$, a contradiction.

\emph{Case 2.} $\alpha$ is a limit ordinal.
Let $E=\{s\in L_T:u\subseteq s\}$. Then either $E\subseteq A$ or
$E\subseteq L_T\setminus A$, for otherwise we could have defined
$t_\alpha=u$. Therefore we have either $A=\{s\in L_T:\exists s'\in
L_T\;(u\subseteq s'$ and $s\preceq s')\}$ or
$A=\{s\in L_T:s\prec u\}$, thus contradicting the assumption that
$A\notin\mathbf M$.

Hence $u\notin\mathbf M$; in particular, this implies
$\mathrm{cf}(\alpha)>\omega$, since the forcing is countably
closed. Note that $C\notin\mathbf M$ must be a branch of $T$, for otherwise
we would have $u\in T$ and therefore $u\in\mathbf M$.

The implication $(c)\rightarrow(d)$ follows from essentially the same
argument present in the proof of Lemma 4.3 of \cite{stevotree}.

For $(d)\rightarrow(e)$, let
$\{t_p:p\in\mbox{}^{<\omega_1}2\}\subseteq T$ be such that $p\subseteq
q\leftrightarrow t_p\subseteq t_q$ for all
$p,q\in\mbox{}^{<\omega_1}2$. For each $p\in\mbox{}^{<\omega_1}2$,
define $F_p=L_T(t_p)$, which is closed by Lemma \ref{compl}. Now, for
each $h\in\mbox{}^{\omega_1}2$, let
$F_h=\bigcap\{F_{h\upharpoonright\alpha}:\alpha\in\omega_1\}$; note
that $F_h\neq\emptyset$ since $L_T$ is compact. Then
$\langle F_r:r\in\mbox{}^{\le\omega_1}2\rangle$ is an $\omega_1$-\v
Cech-Posp\'i\v sil tree in $L_T$.

Finally, $(e)\rightarrow(a)$ follows from Theorem \ref{st1} and
Proposition \ref{cptree}.
\end{proof}

Note that Lemma \ref{equiv} extends Corollary \ref{indestr}.

We now make the following observation, which is essentially taken from
Silver's Lemma (\cite{silver}; see also e.g. \cite[Lemma
VIII.3.4]{kunen}):

\begin{lemma}[Silver \cite{silver}]
\label{kurepano2}

If $T$ is an $\omega_1$-tree, then $T$ does not have a subtree
isomorphic to $\langle\mbox{}^{<\omega_1}2,\subseteq\rangle$.

\end{lemma}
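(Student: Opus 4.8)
The plan is to suppose, toward a contradiction, that $\langle t_s:s\in\mbox{}^{<\omega_1}2\rangle$ witnesses the existence of such a subtree, i.e. $s\subseteq s'\Leftrightarrow t_s\le t_{s'}$ (so that, in particular, $\subseteq$-incomparable sequences map to $\le$-incomparable nodes), and then to derive a contradiction using only the restriction to $\mbox{}^{\le\omega}2$ together with the fact that $T$, being an $\omega_1$-tree, has countable levels. The point I would stress at the outset is that one cannot argue naively that the $\mathfrak c$-many nodes $\{t_s:s\in\mbox{}^{\omega}2\}$ land in a single level of $T$: a tree embedding need not be level-preserving, and these nodes will in general be spread out cofinally in $\omega_1$. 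The trick will instead be to pass to their predecessors at one fixed countable level.

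Concretely, I would set $\beta^*=\sup\{\mathrm{ht}_T(t_\tau)+1:\tau\in\mbox{}^{<\omega}2\}$. Since $\mbox{}^{<\omega}2$ is countable and each $\mathrm{ht}_T(t_\tau)<\omega_1$, regularity of $\omega_1$ gives $\beta^*<\omega_1$, so the level $T_{\beta^*}$ and the initial part $\{t:\mathrm{ht}_T(t)\le\beta^*\}$ are both countable. Because $s\mapsto t_s$ is injective, only countably many $\rho\in\mbox{}^{\omega}2$ can satisfy $t_\rho\in\{t:\mathrm{ht}_T(t)\le\beta^*\}$; hence $A=\{\rho\in\mbox{}^{\omega}2:\mathrm{ht}_T(t_\rho)>\beta^*\}$ is co-countable in $\mbox{}^{\omega}2$, and therefore uncountable. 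For each $\rho\in A$, let $q_\rho$ be the unique predecessor of $t_\rho$ of height $\beta^*$ (it exists and is unique because $T^\downarrow(t_\rho)$ is well-ordered of order type $\mathrm{ht}_T(t_\rho)>\beta^*$), so $q_\rho\in T_{\beta^*}$. As $A$ is uncountable while $T_{\beta^*}$ is countable, the pigeonhole principle yields distinct $\rho,\rho'\in A$ with $q_\rho=q_{\rho'}=:q^*$.

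Finally, let $n_0$ be the least coordinate at which $\rho$ and $\rho'$ differ. Then $\rho\upharpoonright(n_0+1)$ and $\rho'\upharpoonright(n_0+1)$ are $\subseteq$-incomparable, so $t_{\rho\upharpoonright(n_0+1)}$ and $t_{\rho'\upharpoonright(n_0+1)}$ are $\le$-incomparable. On the other hand, both of these nodes have height $<\beta^*$ and lie strictly below $t_\rho$ and $t_{\rho'}$ respectively; comparing heights inside the well-ordered sets $T^\downarrow(t_\rho)$ and $T^\downarrow(t_{\rho'})$, and using that $q^*$ is the height-$\beta^*$ predecessor of each of $t_\rho,t_{\rho'}$, gives $t_{\rho\upharpoonright(n_0+1)}<q^*$ and $t_{\rho'\upharpoonright(n_0+1)}<q^*$. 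Thus both belong to $T^\downarrow(q^*)$, which is a chain, so they are $\le$-comparable --- contradicting the previous sentence. This contradiction rules out any such embedding. I expect the one genuinely delicate point to be the pigeonhole step: the whole argument turns on replacing the uncountably many spread-out images $t_\rho$ by their traces $q_\rho$ on the single countable level $T_{\beta^*}$, after which linearity of $T^\downarrow(q^*)$ delivers the contradiction. Note in particular that no appeal to the value of $\mathfrak c$ or to $\mathrm{CH}$ is needed --- only that $\mbox{}^{\omega}2$ is uncountable while the levels of $T$ are countable --- and that only the fragment $\mbox{}^{\le\omega}2$ of the purported copy is ever used.
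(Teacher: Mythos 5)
Your proof is correct and takes essentially the same route as the paper's: both bound the heights of the images of the finite binary sequences by a countable ordinal, project the nodes $t_\rho$ for uncountably many $\rho\in\mbox{}^{\omega}2$ onto the single countable level at that height, and derive the contradiction from the fact that each $T^\downarrow(q)$ is a chain. The only cosmetic difference is that the paper obtains, for every $f\in\mbox{}^{\omega}2$, a comparable node $u_f\in T_\theta$ by passing to an extension of $f$ whose image has height $>\theta$ and then asserts injectivity of $f\mapsto u_f$, whereas you discard the countably many $\rho$ whose image has height $\le\beta^*$ and run a pigeonhole argument on the level-$\beta^*$ predecessors --- which is exactly the (implicit) justification of the paper's injectivity claim, spelled out.
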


\begin{proof}

Let $\langle T,\le\rangle$ be an $\omega_1$-tree, and suppose, to the
contrary, that $\{t_p:p\in\mbox{}^{<\omega_1}2\}\subseteq T$ is such
that $p\subseteq q\leftrightarrow t_p\le t_q$ for all
$p,q\in\mbox{}^{<\omega_1}2$. Let
$\theta=\sup\{\mathrm{ht}_T(t_p):p\in\mbox{}^{<\omega}2\}\in\omega_1$
and, for each $f\in\mbox{}^\omega2$, pick $u_f\in T_\theta$ such that
$t_f$ and $u_f$ are $\le$-comparable (this is possible since there
is $p\in\mbox{}^{<\omega_1}2$ extending $f$ with
$\mathrm{ht}_T(t_p)>\theta$). The mapping $f\mapsto u_f$ is
injective, which contradicts the fact that $T_\theta$ is countable.
\end{proof}

Now we can prove Theorem \ref{inaccBC}:

\begin{proof}[Proof of Theorem \ref{inaccBC}]

Let $T$ be a Kurepa tree with $\kappa>\aleph_1$ cofinal
branches. We may assume that $T$ is Hausdorff and rooted.
By Lemmas \ref{equiv} and \ref{kurepano2},
the linearly ordered compact space $L_T$ is
indestructible. Furthermore, $w(L_T)\le|T|=\aleph_1$ and
$|L_T|\ge\kappa>\aleph_1$.
\end{proof}

Here we remark that, although the space $L_T$ obtained from a Kurepa
tree is indestructible, the tree itself can of course fail to remain
Kurepa in a countably closed forcing extension, e.g. if the
forcing collapses $2^{\aleph_1}$ onto $\aleph_1$.

\begin{corol}
\label{equic.inacc}

The existence of an inaccessible cardinal and the statement ``every
Lindel\"of $T_3$ indestructible space of weight $\le\aleph_1$ has size
$\le\aleph_1$'' are equiconsistent.

\end{corol}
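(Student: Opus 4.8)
The plan is to establish the two directions of the equiconsistency separately, assembling results already collected above. The direction from the large cardinal requires nothing new: the consistency of $\mathrm{ZFC}$ together with an inaccessible cardinal yields, by Theorem \ref{BC}, the consistency of $\mathrm{ZFC}$ together with the assertion that every Lindel\"of $T_3$ indestructible space of weight $\le\aleph_1$ has size $\le\aleph_1$. So it remains only to reverse this implication at the level of consistency.

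For the reverse direction I would argue contrapositively from Theorem \ref{inaccBC}. First I would record that every compact $T_2$ space is Lindel\"of (being compact) and $T_3$ (being compact Hausdorff, hence normal and $T_1$), and that its indestructibility as a compact space coincides, by Lemma \ref{countcompact}, with indestructibility as a Lindel\"of space. Thus the space furnished by Theorem \ref{inaccBC} under $\mathrm{KH}$ is in particular a Lindel\"of $T_3$ indestructible space of weight $\le\aleph_1$ and size $>\aleph_1$, i.e. a counterexample to the statement under consideration. Consequently the statement implies $\neg\mathrm{KH}$.

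Next I would invoke Solovay's theorem quoted in the Preliminaries: if $\omega_2$ is not inaccessible in $\mathbf L$, then $\mathrm{KH}$ holds. Its contrapositive gives that $\neg\mathrm{KH}$ implies that $\omega_2$ is inaccessible in $\mathbf L$. Chaining the two implications, the statement implies that $\aleph_2$ is inaccessible in $\mathbf L$. Since $\mathbf L$ is a model of $\mathrm{ZFC}$ in which $\aleph_2^{\mathbf V}$ witnesses the existence of an inaccessible cardinal, it follows that if $\mathrm{ZFC}$ together with the statement is consistent, then so is $\mathrm{ZFC}$ together with the existence of an inaccessible cardinal.

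The argument is essentially a bookkeeping of the cited results, so I do not expect a genuine obstacle; the only point requiring a moment's care is the final passage from a model of the statement to an inner model with an inaccessible --- namely, checking that the inaccessibility of $\aleph_2^{\mathbf V}$ \emph{inside} $\mathbf L$ really produces a model of $\mathrm{ZFC}+{}$``there is an inaccessible cardinal''. This is immediate once one recalls that $\mathbf L\models\mathrm{ZFC}$ and that inaccessibility is computed there.
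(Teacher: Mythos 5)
Your proof is correct and takes essentially the same route as the paper, whose own proof is just the citation ``From Theorems \ref{BC} and \ref{inaccBC}'': Theorem \ref{BC} gives one direction, and the other is exactly the chain you spell out --- the space of Theorem \ref{inaccBC} is Lindel\"of $T_3$ indestructible (via Lemma \ref{countcompact}), so the statement implies $\neg\mathrm{KH}$, whence $\omega_2$ is inaccessible in $\mathbf L$ by Solovay's theorem quoted in the Preliminaries. Your added care about passing to the inner model $\mathbf L$ fills in precisely what the paper leaves implicit.
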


\begin{proof}

From Theorems \ref{BC} and \ref{inaccBC}.
\end{proof}

\begin{corol}
\label{equic.BC}

The $\aleph_1$-Borel Conjecture and the existence of an
inaccessible cardinal are equiconsistent.

\end{corol}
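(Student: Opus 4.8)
The plan is to establish the equiconsistency by proving its two directions separately. One direction, $\mathrm{Con}(\mathrm{ZFC}+\text{inaccessible})\rightarrow\mathrm{Con}(\mathrm{ZFC}+\aleph_1\text{-Borel Conjecture})$, is already in hand: it is precisely the content of Theorem \ref{proj}, so this half requires no new work and I would simply cite it.

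The substance lies in the converse, for which I would argue that $\aleph_1$-Borel Conjecture provably implies that $\omega_2$ is inaccessible in $\mathbf L$. The crucial intermediate step is to show that $\aleph_1$-Borel Conjecture implies $\neg\mathrm{KH}$. To this end I would assume $\mathrm{KH}$ and derive a contradiction: by Theorem \ref{inaccBC} there is a compact $T_2$ indestructible space $L_T$ of weight $\le\aleph_1$ and cardinality $>\aleph_1$. Being compact Hausdorff, $L_T$ is $T_{3.5}$, and having weight $\le\aleph_1$ it embeds as a subspace of $[0,1]^{\omega_1}$. Composing with this embedding presents $L_T$ itself as a continuous image of $L_T$ lying in $[0,1]^{\omega_1}$ and of cardinality $>\aleph_1$. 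But $L_T$ is a Lindel\"of $T_3$ indestructible space, so $\aleph_1$-Borel Conjecture forces all of its continuous images in $[0,1]^{\omega_1}$ to have size $\le\aleph_1$, contradicting the existence of this image. Hence $\aleph_1$-Borel Conjecture implies $\neg\mathrm{KH}$.

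To finish the converse I would invoke Solovay's result recorded in the preliminaries: if $\omega_2$ is not inaccessible in $\mathbf L$, then $\mathrm{KH}$ holds. Taking the contrapositive, $\neg\mathrm{KH}$ yields that $\omega_2$ is inaccessible in $\mathbf L$; thus $\mathbf L$ is a model of $\mathrm{ZFC}$ in which an inaccessible cardinal exists, giving $\mathrm{Con}(\mathrm{ZFC}+\text{inaccessible})$. Combining with the first direction completes the equiconsistency.

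I expect essentially everything here to be an assembly of results already available in the excerpt, so the proof is short. The only point demanding a moment of care is the passage from ``compact $T_2$ of weight $\le\aleph_1$'' to ``embeds in $[0,1]^{\omega_1}$''; this is standard (a Hausdorff space of weight $\kappa$ embeds in $[0,1]^\kappa$, and compact $T_2$ spaces are completely regular), but it is the one place where I would make sure the weight bound on $L_T$ is used correctly so that the resulting self-image genuinely sits inside $[0,1]^{\omega_1}$ rather than some larger cube.
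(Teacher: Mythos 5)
Your proposal is correct and is essentially the paper's own argument: the paper likewise cites Theorem \ref{proj} for one direction and, for the converse, combines Theorem \ref{inaccBC} with the fact that Tychonoff spaces of weight $\le\aleph_1$ embed in $[0,1]^{\omega_1}$, with Solovay's result from the preliminaries implicitly supplying the passage from $\neg\mathrm{KH}$ to the inaccessibility of $\omega_2$ in $\mathbf{L}$. Your write-up simply makes explicit the steps the paper's two-line proof leaves to the reader, including the correct care at the embedding step.
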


\begin{proof}

Since Tychonoff spaces of weight $\le\aleph_1$ are embeddable in
$[0,1]^{\omega_1}$, the result follows from Theorems \ref{proj} and
\ref{inaccBC}.
\end{proof}

We also have:

\begin{corol}
\label{2^w1}

For a regular cardinal $\kappa>\aleph_1$, it is consistent with
$\mathrm{ZFC}$ that
there is a
compact $T_2$ indestructible space of weight $\le\aleph_1$ and size
$2^{\aleph_1}=\kappa$.

\end{corol}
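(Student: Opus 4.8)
The plan is to realize the conclusion in a forcing extension by exhibiting a Kurepa tree whose number of cofinal branches is as large as combinatorially possible. Note first that, by Theorem \ref{inaccBC} applied to a rooted Hausdorff Kurepa tree $T$, the space $L_T$ is compact $T_2$ indestructible of weight $\le\aleph_1$; moreover $|L_T|$ lies between the number of cofinal branches of $T$ and $2^{\aleph_1}$. Indeed, since $T$ is Hausdorff the map $B\mapsto\bigcup B$ is injective, so each cofinal branch contributes a distinct point of $L_T$; and every branch of an $\omega_1$-tree (whose levels are countable) is determined by its intersections with the levels, giving at most $\aleph_0^{\aleph_1}=2^{\aleph_1}$ branches in total. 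Thus it suffices to produce a model in which $2^{\aleph_1}=\kappa$ and there is a Kurepa tree with at least $\kappa$ cofinal branches: any such tree then automatically has exactly $\kappa=2^{\aleph_1}$ cofinal branches and yields $|L_T|=\kappa$.

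To obtain such a model, I would start in a ground model of $\mathrm{GCH}$ and force with the standard countably closed poset $\mathbb P$ that generically adds a Kurepa tree together with $\kappa$ distinguished cofinal branches. A condition is a pair $(t,f)$, where $t$ is a normal subtree of $\mbox{}^{<\omega_1}2$ of some countable successor height $\alpha+1$ with countable levels, and $f$ is a function from a countable subset of $\kappa$ into the top level of $t$; one sets $(t',f')\le(t,f)$ when $t'$ end-extends $t$, $\mathrm{dom}(f')\supseteq\mathrm{dom}(f)$, and $f'(i)$ extends $f(i)$ for every $i\in\mathrm{dom}(f)$. The union of a generic filter is then an $\omega_1$-tree $T$ in which each index $i\in\kappa$ determines a cofinal branch, and genericity makes distinct indices yield distinct branches, so $T$ is Kurepa with at least $\kappa$ cofinal branches.

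The verification that $\mathbb P$ behaves well is where the real work lies, and I expect it to be the main obstacle. Countable closure forces one to treat limit stages explicitly: given a decreasing $\omega$-sequence of conditions, one must adjoin a new top level, sealing each tracked branch with a node at the (countable) limit height while keeping the levels countable. Under $\mathrm{GCH}$ there are only $\aleph_1$ possible trees $t$, so a $\Delta$-system argument on the countable domains $\mathrm{dom}(f)$ shows that $\mathbb P$ has the $\aleph_2$-chain condition; together with countable closure this preserves all cardinals and cofinalities. Finally, a nice-name count using the $\aleph_2$-cc together with $|\mathbb P|=\kappa$ gives $2^{\aleph_1}\le\kappa$ in the extension, while the $\kappa$ generic branches supply $\kappa$ distinct subsets of $\omega_1$, whence $2^{\aleph_1}=\kappa$. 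Applying the first paragraph to the resulting tree $T$ then produces a space $L_T$ that is compact $T_2$ indestructible of weight $\le\aleph_1$ and size $2^{\aleph_1}=\kappa$, as required.
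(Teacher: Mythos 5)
Your proposal is correct, and it follows the paper's skeleton in its first stage --- reduce Corollary \ref{2^w1} to the consistency of ``$2^{\aleph_1}=\kappa$ $+$ there is a Kurepa tree with $\kappa$ cofinal branches'' and then run the $L_T$ construction exactly as in Theorem \ref{inaccBC} (your explicit remark that $|L_T|\le\aleph_0^{\aleph_1}=2^{\aleph_1}$, so that the cardinality comes out exactly $\kappa$, is a detail the paper leaves implicit and is worth having on record) --- but it diverges genuinely in how the model is produced. The paper obtains it by citation: take a model of $\mathrm{BACH}+2^{\aleph_1}=\kappa^+$ as in Theorem 8.3 of \cite{tallbach} and collapse $\kappa^+$ to $\kappa$ with $\kappa$-closed forcing (which adds no new $\omega_1$-sequences, so the Kurepa tree and its branches survive while $2^{\aleph_1}$ drops to $\kappa$). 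You instead force directly over a $\mathrm{GCH}$ model with the standard countably closed poset of pairs $(t,f)$ adding a Kurepa tree with $\kappa$ distinguished cofinal branches; this is the classical construction (it appears, e.g., in Jech's book), and your verification outline is the right one. What your route buys is self-containedness --- no appeal to a generalized Martin's axiom --- and a one-step arrangement of $2^{\aleph_1}=\kappa$; what the paper's route buys is brevity and the additional information that such trees already exist under a $\mathrm{BACH}$-type hypothesis. Two routine points you should nail down when writing it up: in the $\aleph_2$-cc argument, after the $\Delta$-system on domains you must also stabilize the tree part $t$ and the restriction of $f$ to the root of the $\Delta$-system (under $\mathrm{CH}$ there are only $\aleph_1$ possibilities for each, so this is fine); and the regularity of $\kappa$ together with $\mathrm{GCH}$ is exactly what gives $|\mathbb{P}|=\kappa^{\aleph_0}=\kappa$ and $\kappa^{\aleph_1}=\kappa$ in the nice-name count, so the hypothesis ``$\kappa$ regular'' is genuinely used, just as it is (via $\kappa$-closure of the collapse) in the paper's argument.
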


\begin{proof}

Since it is consistent that $2^{\aleph_1}=\kappa$ + ``there is a
Kurepa tree with $\kappa$ cofinal branches'' (take e.g. a model of
$\mathrm{BACH}$ + $2^{\aleph_1}=\kappa^+$ as in Theorem 8.3 of
\cite{tallbach} and collapse $\kappa^+$ to $\kappa$ using
$\kappa$-closed forcing), we can proceed as in the proof of Theorem
\ref{inaccBC}.
\end{proof}

Now we turn to Theorem \ref{weak}.
The main ingredient needed in the proof we shall present for it is the
following combination of Theorem 6.1 of \cite{jensen} (see
1.10 in \cite{stevo*}) and Theorem 3.9 of \cite{koenig}:

\begin{thm}[Jensen \cite{jensen}, K\"onig \cite{koenig}]
\label{jk}

If a regular uncountable cardinal $\kappa$ is not weakly compact in
$\mathbf L$, then 
there is a sequence
$\mathcal F=(f_\alpha)_{\alpha\in\lim(\kappa)}$ of functions
$f_\alpha:\alpha\rightarrow\alpha$ that is \emph{coherent} ---
i.e. satisfies $f_\alpha=^* f_\beta\!\upharpoonright\!\alpha$
for every $\alpha,\beta\in\lim(\kappa)$ with $\alpha<\beta$ --- and
such that $\langle T(\mathcal F),\subseteq\rangle$ is a $\kappa$-Aronszajn tree,
where $T(\mathcal
F)=\bigcup_{\xi\in\kappa}\bigcup_{\alpha\in\lim(\kappa)\setminus\xi}
\{f\in\mbox{}^\xi\xi:f=^*f_\alpha\!\upharpoonright\!\xi\}$.

\end{thm}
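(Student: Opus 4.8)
The plan is to isolate the purely combinatorial content of the statement from its fine-structural content. First I would observe that coherence alone already forces $\langle T(\mathcal F),\subseteq\rangle$ to be a $\kappa$-tree, so that the entire weight of the theorem rests on the tree having no cofinal branch. Indeed, fix $\xi\in\kappa$. The coherence condition $f_\alpha=^*f_\beta\!\upharpoonright\!\alpha$ guarantees that for all limit $\alpha,\beta\ge\xi$ one has $f_\alpha\!\upharpoonright\!\xi=^*f_\beta\!\upharpoonright\!\xi$, so all the restrictions $f_\alpha\!\upharpoonright\!\xi$ lie in a single $=^*$-equivalence class. Hence the $\xi$-th level of $T(\mathcal F)$ is exactly the set of $f\in\mbox{}^\xi\xi$ differing from one fixed function on a finite set, which has cardinality at most $|\xi|<\kappa$. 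Since each level is non-empty (any $f_\alpha\!\upharpoonright\!\xi$ with $\alpha>\xi$ witnesses this) and no node has domain $\kappa$, the tree has height exactly $\kappa$ and all levels of size $<\kappa$.

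Next I would reduce ``no cofinal branch'' to non-threadability. A cofinal branch of $T(\mathcal F)$ is a chain meeting every level, that is, a function $g\in\mbox{}^\kappa\kappa$ with $g\!\upharpoonright\!\xi=^*f_\alpha\!\upharpoonright\!\xi$ for every $\xi$ and some (equivalently, by coherence, every) limit $\alpha>\xi$; such a $g$ is precisely a \emph{thread} for $\mathcal F$, satisfying $g\!\upharpoonright\!\alpha=^*f_\alpha$ for all $\alpha\in\lim(\kappa)$. Thus $T(\mathcal F)$ is $\kappa$-Aronszajn if and only if $\mathcal F$ admits no thread, and the whole theorem amounts to producing a coherent, non-threadable sequence $\mathcal F$ of the prescribed form.

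For the construction I would appeal to Jensen's fine-structure theory in the form recorded as Theorem 6.1 of \cite{jensen} (see 1.10 of \cite{stevo*}). Since $\kappa$ is regular, uncountable, and fails to be weakly compact in $\mathbf L$, that theorem supplies a coherent $C$-sequence on $\kappa$ --- a $\square(\kappa)$-sequence --- that is non-threadable \emph{in $V$}; this surviving non-triviality, rather than mere non-triviality inside $\mathbf L$, is the fine-structural heart of the matter. Feeding this sequence into the walks-on-ordinals machinery, Theorem 3.9 of \cite{koenig} manufactures the required mod-finite coherent family $\mathcal F=(f_\alpha)_{\alpha\in\lim(\kappa)}$ with $f_\alpha:\alpha\rightarrow\alpha$, and shows that any thread for $\mathcal F$ would induce a thread for the underlying $C$-sequence; the non-threadability of the latter in $V$ therefore passes to $\mathcal F$.

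The main obstacle is exactly this last transfer across the boundary between $\mathbf L$ and $V$: the elementary part of the argument needs only coherence, but securing non-threadability in the real universe (and not merely inside $\mathbf L$, where a thread could in principle appear once one passes to $V$) is where the hypothesis ``not weakly compact in $\mathbf L$'' is indispensable, and where Jensen's fine-structure analysis does the real work. I would accordingly organize the write-up so that the soft, coherence-only verification that $T(\mathcal F)$ is a $\kappa$-tree is cleanly separated from the invocation of \cite{jensen} and \cite{koenig}, whose combined effect is to deliver a coherent sequence that stays non-trivial in $V$.
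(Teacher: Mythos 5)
Your proposal is correct and coincides with the paper's treatment: the paper offers no proof of Theorem \ref{jk} at all, quoting it as exactly the combination of Theorem 6.1 of \cite{jensen} (via 1.10 of \cite{stevo*}) and Theorem 3.9 of \cite{koenig} that you invoke, and your supplementary verifications --- that each level of $T(\mathcal F)$ is a single $=^*$-class of size $\le|\xi|<\kappa$ and that cofinal branches of $T(\mathcal F)$ correspond precisely to threads of $\mathcal F$, so that Aronszajn-ness reduces to non-threadability in $V$ --- are the standard glue between the two citations. One minor overstatement worth noting: coherence alone does not literally force $T(\mathcal F)$ to be a $\kappa$-tree, since a level $\{f\in\mbox{}^\xi\xi:f=^*f_\alpha\!\upharpoonright\!\xi\}$ could be empty if $f_\alpha$ sent infinitely many points of $\xi$ to ordinals $\ge\xi$, but the sequences delivered by K\"onig's construction avoid this and the quoted theorem asserts the $\kappa$-Aronszajn property outright, so nothing in your argument breaks.
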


The following lemma will be essential in what we shall do next.

\begin{lemma}
\label{cfw}

If $\mathcal F=(f_\alpha)_{\alpha\in\lim(\omega_2)}$ is coherent, then
no branch of $\langle T(\mathcal F),\subseteq\rangle$ has cofinality $\omega_1$.

\end{lemma}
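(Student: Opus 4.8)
The plan is to argue by contradiction. Suppose $B$ is a branch of $\langle T(\mathcal F),\subseteq\rangle$ of cofinality $\omega_1$, and let $g=\bigcup B\in\mbox{}^\delta\delta$ for some $\delta\in\omega_2$ of cofinality $\omega_1$ (note $\mathrm{dom}(g)$ is the supremum of the domains of the elements of $B$, which has cofinality $\omega_1$). The key structural fact I would exploit is the membership criterion for $T(\mathcal F)$: every initial segment $g\!\upharpoonright\!\xi$ of $g$ lies in $T(\mathcal F)$, so for each $\xi<\delta$ there is some limit $\alpha\geq\xi$ with $g\!\upharpoonright\!\xi=^*f_\alpha\!\upharpoonright\!\xi$. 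First I would use coherence to pin down a single $f_\alpha$ that governs $g$ on a tail. Since $\mathcal F$ is coherent, the functions $f_\alpha$ agree modulo finite on overlapping domains; in particular $f_\delta\!\upharpoonright\!\xi=^*f_\alpha\!\upharpoonright\!\xi$ for the relevant $\alpha$, so I expect to reduce to comparing $g$ with $f_\delta$ (where $\delta=\mathrm{dom}(g)$, which is itself a limit ordinal, hence in $\lim(\omega_2)$).

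The heart of the argument is then to show that $g=^*f_\delta$, which would force $g\in T(\mathcal F)$ --- but $g$ has domain $\delta$ and is the union of a cofinal branch, so if $g\in T(\mathcal F)$ it would be an element extending every member of $B$, contradicting that $B$ is a branch (maximal chain) unless $g$ itself realizes the branch. The real contradiction I would aim for comes from cofinality: I would show $g=^*f_\delta$ on $\delta$, i.e. $\{\xi<\delta:g(\xi)\neq f_\delta(\xi)\}$ is finite. For each $\xi<\delta$, membership of $g\!\upharpoonright\!\xi$ in $T(\mathcal F)$ gives a limit $\alpha(\xi)$ with $g\!\upharpoonright\!\xi=^*f_{\alpha(\xi)}\!\upharpoonright\!\xi$, and coherence gives $f_{\alpha(\xi)}\!\upharpoonright\!\xi=^*f_\delta\!\upharpoonright\!\xi$; composing, $g\!\upharpoonright\!\xi=^*f_\delta\!\upharpoonright\!\xi$ for every $\xi<\delta$. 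Thus for each $\xi<\delta$ the disagreement set $D_\xi=\{\zeta<\xi:g(\zeta)\neq f_\delta(\zeta)\}$ is finite.

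The main obstacle is that knowing each $D_\xi$ is finite does not immediately bound $D=\{\zeta<\delta:g(\zeta)\neq f_\delta(\zeta)\}$, since $D=\bigcup_{\xi<\delta}D_\xi$ could a priori be infinite as $\xi\to\delta$. This is exactly where $\mathrm{cf}(\delta)=\omega_1$ must be used. Here I would argue that the sizes $|D_\xi|$ are nondecreasing in $\xi$ and must stabilize: if $|D_\xi|$ took unboundedly many values below $\delta$, then by regularity of $\omega_1$ (and $\mathrm{cf}(\delta)=\omega_1$) one could extract a strictly increasing $\omega$-sequence of disagreement points cofinal in some countable-cofinality subsupremum, but the finiteness of each $D_\xi$ prevents an $\omega$-sequence of new disagreements from accumulating below any $\xi<\delta$. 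More carefully, since each $D_\xi$ is finite and $D_\xi\subseteq D_{\xi'}$ for $\xi\leq\xi'$, the function $\xi\mapsto|D_\xi|$ is a nondecreasing function into $\omega$; as $\mathrm{cf}(\delta)=\omega_1>\omega$, this function is eventually constant, so $D=\bigcup_{\xi<\delta}D_\xi$ is finite. Hence $g=^*f_\delta$, so $g\in T(\mathcal F)$, contradicting the maximality of the branch $B$ (as $g$ would be a node of $T(\mathcal F)$ strictly above all of $B$, or more precisely $B\cup\{g\}$ would properly extend $B$). This contradiction shows no branch can have cofinality $\omega_1$, completing the proof.
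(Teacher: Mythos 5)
Your proof is correct and takes essentially the same approach as the paper's: you compare $g=\bigcup B$ with $f_\delta$ via coherence and show the nondecreasing finite disagreement counts stabilize because $\mathrm{cf}(\delta)=\omega_1>\omega$, exactly as the paper does along a fixed cofinal sequence $(\gamma_\eta)_{\eta\in\omega_1}$ of limit ordinals (where regularity of $\omega_1$ produces the eventual value $k_0$), and both arguments conclude $g=^*f_\delta$, hence $g\in T(\mathcal F)$, contradicting the maximality of $B$.
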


\begin{proof}

Suppose, to the contrary, that $B\subseteq T(\mathcal F)$ is a branch
of cofinality $\omega_1$. Let $g=\bigcup B$ and
$\gamma=\mathrm{dom}(g)\in\lim(\omega_2)$, and fix a strictly
increasing sequence of limit ordinals
$(\gamma_\eta)_{\eta\in\omega_1}$ with
$\sup\{\gamma_\eta:\eta\in\omega_1\}=\gamma$.
Note that for each $\eta\in\omega_1$ we have
$g\!\upharpoonright\!\gamma_\eta\in T(\mathcal F)$, and thus
$g\!\upharpoonright\!\gamma_\eta=^* f_{\gamma_\eta}=^*
f_\gamma\!\upharpoonright\!\gamma_\eta$ since $\mathcal F$ is
coherent. This yields
$$
\omega_1=\bigcup_{k\in\omega}\{\eta\in\omega_1:|\{\xi\in\gamma_\eta:g(\xi)\neq
f_\gamma(\xi)\}|=k\};
$$
thus, by regularity of $\omega_1$, there is $k_0\in\omega$ such that
$|\{\xi\in\gamma_\eta:g(\xi)\neq f_\gamma(\xi)\}|=k_0$ for uncountably
many $\eta\in\omega_1$.
Since the mapping $\eta\mapsto|\{\xi\in\gamma_\eta:g(\xi)\neq
f_\gamma(\xi)\}|$ is non-decreasing, we must have
$|\{\xi\in\gamma_\eta:g(\xi)\neq f_\gamma(\xi)\}|=k_0$ for every
$\eta\in\omega_1$ greater than some $\eta_0\in\omega_1$. But then
$|\{\xi\in\gamma:g(\xi)\neq f_\gamma(\xi)\}|=k_0$, which implies
$g\in T(\mathcal F)$, thus contradicting the choice of $B$.
\end{proof}

The next lemma will guarantee indestructibility of the space we shall
obtain.

\begin{lemma}
\label{no2w1}

Assume $\mathrm{CH}$.
If $\mathcal F=(f_\alpha)_{\alpha\in\lim(\omega_2)}$ is coherent, then
$\langle T(\mathcal F),\subseteq\rangle$ does not have a subtree
isomorphic to $\langle\mbox{}^{<\omega_1}2,\subseteq\rangle$.

\end{lemma}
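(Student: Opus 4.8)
The plan is to argue by contradiction: suppose $\{t_p:p\in\mbox{}^{<\omega_1}2\}\subseteq T(\mathcal F)$ witnesses that $T(\mathcal F)$ has a subtree isomorphic to $\langle\mbox{}^{<\omega_1}2,\subseteq\rangle$, so that $p\subseteq q\leftrightarrow t_p\subseteq t_q$. Recall from Theorem \ref{jk} that $\langle T(\mathcal F),\subseteq\rangle$ is an $\omega_2$-Aronszajn tree; in particular each of its levels has size $\le\aleph_1$. The first step is to exploit $\mathrm{CH}$: since $|\mbox{}^{<\omega_1}2|=\aleph_1$ under $\mathrm{CH}$, the ordinal $\theta=\sup\{\mathrm{ht}_{T(\mathcal F)}(t_p):p\in\mbox{}^{<\omega_1}2\}$ is a supremum of only $\aleph_1$ ordinals below $\omega_2$, whence $\theta<\omega_2$ by regularity of $\omega_2$. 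Consequently the initial part $T'=\{x\in T(\mathcal F):\mathrm{ht}_{T(\mathcal F)}(x)\le\theta\}$, being a union of $\le\aleph_1$ levels each of size $\le\aleph_1$, has cardinality $\le\aleph_1$.

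Next, for each $f\in\mbox{}^{\omega_1}2$ I would consider the chain $C_f=\{t_{f\upharpoonright\alpha}:\alpha\in\omega_1\}$. Since $\alpha\mapsto t_{f\upharpoonright\alpha}$ is strictly $\subseteq$-increasing, the heights $\mathrm{ht}_{T(\mathcal F)}(t_{f\upharpoonright\alpha})$ form a strictly increasing $\omega_1$-sequence, so their supremum $\mathrm{lev}_f\le\theta$ is a limit ordinal of cofinality $\omega_1$. Let $b_f$ be a branch (maximal chain) of $T(\mathcal F)$ containing $C_f$; being a maximal chain, $b_f$ is downward closed, so that position along $b_f$ coincides with tree height. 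Here is where Lemma \ref{cfw} enters, and this is the crux of the argument: if $C_f$ were cofinal in $b_f$, then $b_f$ would have order type $\mathrm{lev}_f$ and hence cofinality $\omega_1$, contradicting the coherence of $\mathcal F$. Therefore $b_f$ must extend properly past $\mathrm{lev}_f$, so it contains an element $w_f$ with $\mathrm{ht}_{T(\mathcal F)}(w_f)=\mathrm{lev}_f\le\theta$; in particular $w_f\in T'$ and $t_{f\upharpoonright\alpha}<w_f$ for every $\alpha\in\omega_1$.

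The final step is a counting contradiction. I would check that $f\mapsto w_f$ is injective: if $f,f'\in\mbox{}^{\omega_1}2$ first differ at $\alpha$, then $f\upharpoonright(\alpha+1)$ and $f'\upharpoonright(\alpha+1)$ are $\subseteq$-incomparable, so $t_{f\upharpoonright(\alpha+1)}$ and $t_{f'\upharpoonright(\alpha+1)}$ are incomparable in $T(\mathcal F)$; were $w_f=w_{f'}$, both would lie in the well-ordered set of $\subseteq$-predecessors of $w_f$ and thus be comparable, a contradiction. Hence $f\mapsto w_f$ embeds $\mbox{}^{\omega_1}2$ into $T'$, yielding $2^{\aleph_1}=|\mbox{}^{\omega_1}2|\le|T'|\le\aleph_1$, which is absurd.

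I expect the only delicate point to be the application of Lemma \ref{cfw}: one must phrase everything in terms of maximal chains so that tree height equals position along the branch, and then verify that an $\omega_1$-cofinal chain which fails to be cofinal in its branch genuinely produces a node at the limit level $\mathrm{lev}_f$. Everything else is bookkeeping with $\mathrm{CH}$ (used to bound $\theta<\omega_2$, and hence $|T'|\le\aleph_1$) together with the fact that the levels of the $\omega_2$-Aronszajn tree $T(\mathcal F)$ have size $\le\aleph_1$.
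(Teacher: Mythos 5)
Your proof is correct, but it takes a genuinely more abstract route than the paper's. The paper works inside $T(\mathcal F)$ concretely: from the embedded copy $\{g_s:s\in{}^{<\omega_1}2\}$ it forms, for each $h\in{}^{\omega_1}2$, the union $g_h=\bigcup\{g_{h\upharpoonright\alpha}:\alpha\in\omega_1\}$, uses Lemma \ref{cfw} to conclude $g_h\in T(\mathcal F)$ (otherwise the chain would generate a branch of cofinality $\omega_1$), and then pads with $f_\delta$ --- where $\delta=\sup\{\mathrm{dom}(g_s):s\in{}^{<\omega_1}2\}<\omega_2$ by $\mathrm{CH}$, exactly your use of $\mathrm{CH}$ --- to produce $2^{\aleph_1}$ distinct elements $\tilde g_h$ all lying in the \emph{single} level $\delta$, which by coherence consists of finite modifications of $f_\delta$ and so has size $\aleph_1$. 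You instead run a Silver-style argument (cf. Lemma \ref{kurepano2}) at the level of abstract trees: extend each $\omega_1$-chain to a maximal chain, invoke Lemma \ref{cfw} to rule out cofinality $\omega_1$ and thereby extract a node $w_f$ at the limit height $\mathrm{lev}_f$, and count into the initial part $T'$ below $\theta$. Your route buys generality: it shows that, under $\mathrm{CH}$, no $\omega_2$-tree with levels of size $\le\aleph_1$ in which no branch has cofinality $\omega_1$ can embed $\langle{}^{<\omega_1}2,\subseteq\rangle$, with no reference to the internal structure of $T(\mathcal F)$ beyond Lemma \ref{cfw}; the paper's concrete padding is shorter and pins all $2^{\aleph_1}$ witnesses to one level rather than to an $\aleph_1$-sized initial part. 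One citation should be repaired, though: the lemma is stated for an \emph{arbitrary} coherent $\mathcal F$, and Theorem \ref{jk} only asserts that \emph{some} coherent $\mathcal F$ (under the anti-large-cardinal hypothesis) has $T(\mathcal F)$ $\omega_2$-Aronszajn, so you cannot quote it for the bound on level sizes. No harm is done: coherence alone gives that the $\xi$-th level is contained in $\{f\in{}^\xi\xi:f=^*f_\alpha\upharpoonright\xi\}$ for any limit $\alpha\ge\xi$, a set of size $\le\aleph_1$, which is all you actually use (the Aronszajn property itself never enters your argument). With that one-line substitution the proof is complete; the points you flag as delicate --- maximal chains in a tree are downward closed, any node of height $\ge\mathrm{lev}_f$ has a predecessor of height exactly $\mathrm{lev}_f$, and incomparability of $t_{f\upharpoonright(\alpha+1)}$ and $t_{f'\upharpoonright(\alpha+1)}$ yields injectivity of $f\mapsto w_f$ --- all check out.
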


\begin{proof}

This is pretty much like Lemma \ref{kurepano2}.
Suppose that there is $\{g_s:s\in\mbox{}^{<\omega_1}2\}\subseteq
T(\mathcal F)$ such that $g_s\subseteq g_t\leftrightarrow s\subseteq
t$ for all $s,t\in\mbox{}^{<\omega_1}2$. By $\mathrm{CH}$, we have
that
$\delta=\sup\{\mathrm{dom}(g_s):s\in\mbox{}^{<\omega_1}2\}\in\omega_2$.
For each $h\in\mbox{}^{\omega_1}2$, let
$g_h=\bigcup\{g_{h\upharpoonright\alpha}:\alpha\in\omega_1\}$, and
then define $\tilde
g_h=g_h\cup(f_\delta\!\upharpoonright(\delta\setminus\mathrm{dom}(g_h)))$;
note that $g_h\in T(\mathcal F)$ by Lemma \ref{cfw}, and thus $\tilde
g_h\in T(\mathcal F)$.
But
the $\delta$-th level of $T(\mathcal F)$ is the set
$\{f\in\mbox{}^\delta\delta:f=^*f_\delta\}$, which has cardinality
$\aleph_1$; this leads to a contradiction, since this set must include
$\{\tilde g_h:h\in\mbox{}^{\omega_1}2\}$ and $h\mapsto\tilde g_h$ is
one-to-one.
\end{proof}

The following proposition will then be the core of our proof of
Theorem \ref{weak}:

\begin{prop}
\label{weak0}

If $\mathrm{CH}$ holds and $\omega_2$ is not weakly compact in
$\mathbf L$, then there is a compact $T_2$ indestructible space of
pseudocharacter $\aleph_1$ and size $\aleph_2$.

\end{prop}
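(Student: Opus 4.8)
The plan is to mirror the construction used for Theorem~\ref{inaccBC}, but now building the space from the Aronszajn-type tree $T(\mathcal F)$ supplied by Theorem~\ref{jk} rather than from a Kurepa tree. Concretely, I would let $\mathcal F=(f_\alpha)_{\alpha\in\lim(\omega_2)}$ be a coherent sequence with $\langle T(\mathcal F),\subseteq\rangle$ an $\omega_2$-Aronszajn tree, as guaranteed by Theorem~\ref{jk} since $\omega_2$ is assumed not weakly compact in $\mathbf L$. Before applying the $L_T$ machinery, though, I must first arrange that $T(\mathcal F)$ fits the format required for the construction of $L_T$: namely that it be a rooted Hausdorff tree, that all its levels and branches have size $\le\aleph_1$, and that it may be taken as an initial part of $\langle\mbox{}^{<\omega_2}(\omega_1+1),\subseteq\rangle$ satisfying condition $(*)$. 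The level-size bound follows from $\mathrm{CH}$, since the $\delta$-th level $\{f\in\mbox{}^\delta\delta:f=^*f_\delta\}$ has cardinality $\aleph_1$; the branch-size bound follows because $T(\mathcal F)$ is Aronszajn and hence has no branch of length $\omega_2$, so every branch has length (and cardinality) $\le\aleph_1$. I would note that replacing $T(\mathcal F)$ by an isomorphic copy inside $\mbox{}^{<\omega_2}(\omega_1+1)$ satisfying $(*)$ is harmless, exactly as in the paragraph preceding Lemma~\ref{compl}.

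Given such a $T=T(\mathcal F)$, I form the linearly ordered space $L_T$ and claim it is the desired space. Compactness is Lemma~\ref{compl}, so $L_T$ is compact $T_2$. The pseudocharacter bound $\psi(L_T)\le\aleph_1$ is exactly Lemma~\ref{pseudo}. For indestructibility I invoke the equivalence $(a)\leftrightarrow(d)$ of Lemma~\ref{equiv}: $L_T$ is destructible if and only if $T$ has a subtree isomorphic to $\langle\mbox{}^{<\omega_1}2,\subseteq\rangle$. But Lemma~\ref{no2w1}, under $\mathrm{CH}$, says precisely that the coherent tree $T(\mathcal F)$ has no such subtree; hence $L_T$ is indestructible. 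This is where the two preparatory lemmas of this section, Lemma~\ref{cfw} and Lemma~\ref{no2w1}, do their work: the absence of branches of cofinality $\omega_1$ (Lemma~\ref{cfw}) is what powers the argument in Lemma~\ref{no2w1} that there is no $\mbox{}^{<\omega_1}2$ subtree.

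The last thing to verify is the size requirement $|L_T|=\aleph_2$. Here I would argue that $L_T$ has at least $\aleph_2$ points because $T(\mathcal F)$ has $\aleph_2$-many branches: each limit level $T_\alpha$ is non-empty and of size $\aleph_1$ for cofinally many $\alpha\in\omega_2$, and distinct nodes yield distinct elements of $L_T$ via the maps $L_T(t)$, so $|L_T|\ge|T|=\aleph_2$. The upper bound $|L_T|\le\aleph_2$ follows from $w(L_T)\le|T|=\aleph_2$ together with $\mathrm{CH}$ (a compact linearly ordered space of weight $\le\aleph_2$ has size $\le 2^{\aleph_2}$, but more care is needed; the cleaner route is to observe directly that $L_T\subseteq\mbox{}^{\omega_2}(\omega_1+1)$ has size at most the number of branches times a bounded factor). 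Thus $|L_T|=\aleph_2$.

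I expect the main obstacle to be the careful bookkeeping needed to place $T(\mathcal F)$ into the normal form demanded by the $L_T$ construction while preserving both the coherence structure used by Lemma~\ref{no2w1} and the Aronszajn property used for the level and branch bounds. In particular, verifying that the isomorphic copy satisfying $(*)$ still has no $\mbox{}^{<\omega_1}2$-subtree — i.e.\ that Lemma~\ref{no2w1} transfers across the isomorphism — requires that the reindexing be a genuine tree isomorphism, so that subtrees correspond to subtrees; once that is in hand, the remaining steps are direct citations of Lemmas~\ref{compl}, \ref{pseudo}, \ref{equiv} and \ref{no2w1}. Pinning down the exact cardinality $|L_T|=\aleph_2$ under $\mathrm{CH}$, as opposed to merely $\ge\aleph_2$, is the other point where I would need to be slightly careful rather than merely invoke a prior result.
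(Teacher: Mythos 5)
Your overall route is exactly the paper's: replace $T(\mathcal F)$ by an isomorphic tree $T$ in the normal form preceding Lemma \ref{compl}, then cite Lemma \ref{compl} for compactness, Lemma \ref{pseudo} for $\psi(L_T)\le\aleph_1$, and Lemma \ref{equiv}$(a)\leftrightarrow(d)$ plus Lemma \ref{no2w1} for indestructibility. The genuine gap is at the one point you explicitly leave open: the upper bound $|L_T|\le\aleph_2$. Your suggested ``cleaner route'' --- that $L_T$ has size ``at most the number of branches times a bounded factor'' --- is circular, because $|L_T|$ \emph{is} the number of branches (the map $B\mapsto\bigcup B$ is injective on branches), so it bounds nothing; and branch-counting genuinely needs control of cofinalities, since a tree of size $\aleph_1$ such as $\mbox{}^{<\omega_1}2$ can have $2^{\aleph_1}$ branches, and under $\mathrm{CH}$ the value $2^{\aleph_1}$ may well exceed $\aleph_2$. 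The paper's argument is that Lemma \ref{cfw} is used a \emph{second} time, directly in the cardinality computation and not only inside the proof of Lemma \ref{no2w1}: since $T$ is $\omega_2$-Aronszajn (no branch of cofinality $\omega_2$) and no branch has cofinality $\omega_1$ by Lemma \ref{cfw}, every branch has countable cofinality, so every element of $L_T$ is the union of a countable subset of $T$; hence $|L_T|\le|[T]^{\le\aleph_0}|=\aleph_2^{\aleph_0}=\aleph_1^{\aleph_0}\cdot\aleph_2=\aleph_2$ by $\mathrm{CH}$. Your alternative via weight gives only $2^{w(L_T)}$, which, as you note yourself, is insufficient. This countable-cofinality step is the missing idea.

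Two smaller repairs. First, the statement demands pseudocharacter exactly $\aleph_1$, and you only establish $\psi(L_T)\le\aleph_1$; the paper closes this by Arhangel'ski\u\i's theorem: if $\psi(L_T)=\aleph_0$, then $|L_T|\le 2^{\aleph_0}=\aleph_1$ under $\mathrm{CH}$, contradicting $|L_T|=\aleph_2$. Second, in your lower bound the phrase ``distinct nodes yield distinct elements of $L_T$'' is not literally true, since distinct nodes on the same branch give the same element of $L_T$; the correct count is that each element of $L_T$ corresponds to a branch of size $\le\aleph_1$ and the $\aleph_2$-many nodes of $T$ are covered by branches, so there are at least $\aleph_2$ branches. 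These two points are easily fixed, but the upper bound $|L_T|\le\aleph_2$ via Lemma \ref{cfw} is a substantive omission rather than bookkeeping.
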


\begin{proof}

Assume that $\omega_2$ is not weakly compact in $\mathbf L$, and let
then $\mathcal F=(f_\alpha)_{\alpha\in\lim(\omega_2)}$ be given by
Theorem \ref{jk}. Now consider the linearly ordered compact space
$L_T$, where $T$ is a tree isomorphic to $T(\mathcal F)$ that
satisfies the assumptions stated in the paragraph preceding Lemma
\ref{compl}. By Lemmas \ref{equiv} and \ref{no2w1},
$L_T$ is indestructible. On the one hand, the fact that $T$ is
$\omega_2$-Aronszajn implies that
$|L_{T}|\ge\aleph_2$; on the other hand, in view of Lemma \ref{cfw},
it also implies that eve\-ry branch of $T$ has countable cofinality,
and thus $L_{T}\subseteq\{\bigcup C:C\in[T]^{\le\aleph_0}\}$; since
$|T|=\aleph_2$, this yields
$|L_{T}|\le\aleph_2\mbox{}^{\aleph_0}=\aleph_1\mbox{}^{\aleph_0}\cdot\aleph_2=\aleph_2$
by $\mathrm{CH}$. Therefore, $|L_{T}|=\aleph_2$.
Finally, $\psi(L_{T})\le\aleph_1$ by Lemma \ref{pseudo},
and so $\psi(L_{T})=\aleph_1$ since otherwise we would
contradict Arhangelski\u\i 's Theorem \cite{arh}.
\end{proof}

Now Theorem \ref{weak} follows directly from Proposition \ref{weak0}:

\begin{proof}[Proof of Theorem \ref{weak}]

If $\mathrm{CH}$ fails, then any subspace $X\subseteq\mathbb R$ with
$|X|=\aleph_2$ will sa\-tis\-fy the required conditions (recall that, by
Theorem 6(a) of \cite{tall95}, every hereditarily Lindel\"of space is
indestructible). If $\mathrm{CH}$ holds, the result follows from
Proposition \ref{weak0}.
\end{proof}

\begin{corol}
\label{equic.weak}

The existence of a weakly compact cardinal and the statement
``there is no Lindel\"of $T_1$ space of pseudocharacter
$\le\aleph_1$ and size $\aleph_2$'' are equiconsistent.

\end{corol}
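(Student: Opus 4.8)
The plan is to obtain the equiconsistency by playing Tall and Usuba's Theorem \ref{psi} against our Theorem \ref{weak}, establishing the two consistency implications separately. Recall that equiconsistency is taken relative to $\mathrm{ZFC}$, so I must show $\mathrm{Con}(\mathrm{ZFC}+\textrm{``there is a weakly compact cardinal''})$ if and only if $\mathrm{Con}(\mathrm{ZFC}+\textrm{``there is no Lindel\"of $T_1$ space of pseudocharacter $\le\aleph_1$ and size $\aleph_2$''})$. The observation that makes the two theorems fit together is that every $T_3$ space is $T_1$: the space manufactured by Theorem \ref{weak} is a Lindel\"of $T_3$ space of pseudocharacter $\le\aleph_1$ and size $\aleph_2$, hence precisely a witness against the non-existence statement above. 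For the forward implication I would simply quote Theorem \ref{psi}, which directly gives that consistency of a weakly compact cardinal yields consistency of the desired non-existence statement.

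For the converse I would argue by contraposition, showing that the failure of $\mathrm{Con}(\mathrm{ZFC}+\textrm{``weakly compact''})$ forces such a space to exist provably in $\mathrm{ZFC}$. The key step is to pass to the constructible universe. If $\mathrm{ZFC}+\textrm{``there is a weakly compact cardinal''}$ is inconsistent, then $\mathrm{ZFC}$ proves that no cardinal is weakly compact; relativizing this theorem to $\mathbf{L}$ — using that $\mathbf{L}\models\mathrm{ZFC}$ and that provability is preserved under relativization — shows that $\mathrm{ZFC}$ proves that no cardinal, and in particular $\omega_2$, is weakly compact in $\mathbf{L}$. Theorem \ref{weak} then applies under a $\mathrm{ZFC}$-provable hypothesis and yields, provably in $\mathrm{ZFC}$, a Lindel\"of $T_3$ (hence $T_1$) indestructible space of pseudocharacter $\le\aleph_1$ and size $\aleph_2$. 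Consequently $\mathrm{ZFC}+\textrm{``there is no such space''}$ is inconsistent, which is exactly the contrapositive of the implication I want.

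I expect the only delicate point to be this relativization: one must be careful that the hypothesis of Theorem \ref{weak}, namely ``$\omega_2$ is not weakly compact in $\mathbf{L}$'', is secured as a $\mathrm{ZFC}$-\emph{theorem} under the assumption $\neg\mathrm{Con}(\mathrm{ZFC}+\textrm{``weakly compact''})$, and not merely as a statement true in some chosen model. Once that is in place the argument is purely a combination of citations, with Theorem \ref{psi} supplying one direction and Theorem \ref{weak} the other; everything else is routine.
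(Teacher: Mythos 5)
Your proposal is correct and takes essentially the same route as the paper, whose proof is simply the citation ``From Theorems \ref{psi} and \ref{weak}'': Theorem \ref{psi} gives one consistency direction, and Theorem \ref{weak} (whose $T_3$ conclusion implies $T_1$) gives the other. The metamathematical details you spell out --- deriving ``$\omega_2$ is not weakly compact in $\mathbf L$'' as a $\mathrm{ZFC}$-theorem from $\neg\mathrm{Con}(\mathrm{ZFC}+\textrm{weakly compact})$ via relativization to $\mathbf L$ --- are exactly the standard argument the paper leaves implicit.
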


\begin{proof}

From Theorems \ref{psi} and \ref{weak}.
\end{proof}

We conclude with the following observation:

\begin{prop}
\label{final}

Assume that $\kappa$ is an inaccessible cardinal that is not weakly
compact in $\mathbf L$.
If $\kappa$ is L\'evy-collapsed to $\aleph_2$,
the model $\mathbf M[G]$ thus obtained satisfies:
\begin{itemize}
\item[$(i)$]
every Lindel\"of $T_3$ indestructible space of
weight $\le\aleph_1$ has size $\le\aleph_1$;
and
\item[$(ii)$]
there is a compact $T_2$ indestructible space with pseudocharacter
$\aleph_1$ and size $\aleph_2$.
\end{itemize}

\end{prop}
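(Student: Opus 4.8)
The plan is to recognise $\mathbf M[G]$ as precisely the model used by Tall and Usuba to prove Theorem \ref{BC}, and then to verify that the extra assumption on $\kappa$ forces the hypotheses of Proposition \ref{weak0} to hold in $\mathbf M[G]$; the two clauses then follow from those two results respectively.

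First I would record the standard behaviour of the L\'evy collapse $\mathbb P=\mathrm{Coll}(\omega_1,{<}\kappa)$. Since $\kappa$ is inaccessible, $\mathbb P$ is countably closed and satisfies the $\kappa$-chain condition; hence it preserves $\omega_1$ and all cardinals $\ge\kappa$, collapses every cardinal in the interval $(\aleph_1,\kappa)$ to $\aleph_1$, and turns $\kappa$ into $\omega_2$ of $\mathbf M[G]$. With this in hand, clause $(i)$ is immediate: $\mathbf M[G]$ is exactly the generic extension produced in the proof of Theorem \ref{BC}, so in $\mathbf M[G]$ every Lindel\"of $T_3$ indestructible space of weight $\le\aleph_1$ has size $\le\aleph_1$. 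The additional hypothesis that $\kappa$ fail to be weakly compact in $\mathbf L$ is harmless here, since the Tall--Usuba argument uses only inaccessibility of $\kappa$.

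For clause $(ii)$ I would check the two hypotheses of Proposition \ref{weak0} in $\mathbf M[G]$. Because $\mathbb P$ is countably closed it adds no new reals, so $(2^{\aleph_0})^{\mathbf M[G]}$ is just the old continuum $(2^{\aleph_0})^{\mathbf M}$, a cardinal below $\kappa$ in $\mathbf M$ and therefore collapsed to $\aleph_1$; hence $\mathbf M[G]\models\mathrm{CH}$. For the large-cardinal clause, the constructible universe is absolute, so $\mathbf L^{\mathbf M[G]}=\mathbf L$, while $\omega_2^{\mathbf M[G]}=\kappa$ is not weakly compact in $\mathbf L$ by hypothesis; thus $\omega_2$ is not weakly compact in $\mathbf L$ in the sense of $\mathbf M[G]$. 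Both hypotheses of Proposition \ref{weak0} now hold in $\mathbf M[G]$, and that proposition delivers a compact $T_2$ indestructible space of pseudocharacter $\aleph_1$ and size $\aleph_2$, establishing $(ii)$.

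The only genuinely delicate point is the cardinal bookkeeping for the L\'evy collapse --- confirming simultaneously that $\kappa$ survives as $\omega_2$ and that the ground-model continuum is collapsed to $\aleph_1$ --- together with the observation that strengthening the inaccessible $\kappa$ to one that is not weakly compact in $\mathbf L$ leaves the proof of $(i)$ untouched while supplying exactly what Proposition \ref{weak0} needs for $(ii)$. Beyond this, the argument is a direct appeal to the two results already available, and no new obstacle should arise.
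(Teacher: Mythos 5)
Your proposal is correct and follows essentially the same route as the paper: clause $(i)$ by identifying $\mathbf M[G]$ with the model of Tall--Usuba's proof of Theorem \ref{BC}, and clause $(ii)$ by verifying the hypotheses of Proposition \ref{weak0} via absoluteness of $\mathbf L$ (so that $\omega_2^{\mathbf M[G]}=\kappa$ is not weakly compact in $\mathbf L$) together with $\mathrm{CH}$ in $\mathbf M[G]$. The only difference is that you spell out the standard L\'evy-collapse bookkeeping and the $\mathrm{CH}$ argument, which the paper leaves implicit.
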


\begin{proof}

Theorem \ref{BC} is proven in \cite{usuba} by showing that $(i)$ holds
in the model obtained from the L\'evy collapse of an inaccessible to
$\aleph_2$. Thus we only need to check $(ii)$.
Note that ``$\omega_2$ is not weakly compact in $\mathbf L$'' holds in
$\mathbf M[G]$, since the absoluteness of the definition of $\mathbf
L$ implies that $\mathbf L^{\mathbf M}=\mathbf L^{\mathbf M[G]}$.
As $\mathrm{CH}$ also holds in $\mathbf M[G]$, we can then apply
Proposition \ref{weak0}.
%
%
%
\end{proof}

\section*{Acknowledgements}
We would like to thank L\'ucia R. Junqueira for her valuable comments
and helpful discussions during the preparation of this paper. We would
also like to thank the referee for the useful suggestions.

\end{document}